\newtheorem{theorem}{Theorem}[section]
\newtheorem{lemma}[theorem]{Lemma}
\newtheorem{proposition}[theorem]{Proposition}
\newtheorem{corollary}[theorem]{Corollary}
\def\half{ \frac{1}{2}}
\def\D{\partial}
\def\R{{\mathbb R}}
\def\nint{\mathop{\diagup\kern-13.0pt\int}}
\def\Z{{\mathbb Z}}
\def\bas{\begin{align*}}
\def\eas{\end{align*}}
\def\bi{\begin{itemize}}
\def\ei{\end{itemize}}
\def\emph#1{{\it #1}}
\def\eps{{\epsilon}}
\def\dq{{\delta}}
\def\sdq{{|\delta|}}
\def\pax{\mathbf{u}}
\def\rpax{\mathbf{r}}
\def\PVy{{\text{p.v.} |y|^{-1}}}
\DeclareMathOperator{\sgn}{sgn}
\theoremstyle{definition}
\numberwithin{equation}{section}
\begin{document}

\title{Well-posedness for the surface quasi-geostrophic front equation}
\author{Albert Ai}
\address{Department of Mathematics, University of Wisconsin, Madison}
\email{aai@math.wisc.edu}
\author{Ovidiu-Neculai Avadanei}
\address{Department of Mathematics, University of California at Berkeley}
\email{ovidiu\_avadanei@berkeley.edu}
\keywords{SQG front equation, paralinearization, modified energies, frequency envelopes, wave packet testing}
\subjclass{35Q35}
\begin{abstract}
We consider the well-posedness of the surface quasi-geostrophic (SQG) front equation. Hunter-Shu-Zhang  \cite{HSZglobal} established well-posedness under a small data condition as well as a convergence condition on an expansion of the equation's nonlinearity. In the present article, we establish unconditional large data local well-posedness of the SQG front equation, while also improving the low regularity threshold for the initial data. In addition, we establish global well-posedness theory in the rough data regime by using the testing by wave packet approach of Ifrim-Tataru.
\end{abstract}

\maketitle
\addtocontents{toc}{\protect\setcounter{tocdepth}{1}}
\tableofcontents

\section{Introduction}

The surface quasi-geostrophic (SQG) equation takes the form 
\begin{equation}\label{rawSQG}
\theta_t +  u \cdot \nabla \theta = 0, \qquad u = (- \Delta )^{-\half} \nabla^\perp \theta
\end{equation}
where $\theta$ is a scalar evolution equation on $\R^2$, $(-\Delta)^{-\half}$ denotes a fractional Laplacian, and $\nabla^\perp = (-\D_y, \D_x)$. The SQG equation arises from oceanic and atmospheric science as a model for quasi-geostrophic flows confined to a surface. This equation is also of interest due to similarities with the three dimensional incompressible Euler equation. In particular, the question of singularity formation remains open for both problems.

The SQG equation is one member in a family of two-dimensional active scalar equations parameterized by the transport term in \eqref{rawSQG}, with 
\begin{equation}\label{gSQG}
u = (- \Delta )^{-\frac{\alpha}{2}} \nabla^\perp \theta, \qquad \alpha \in (0, 2].
\end{equation}
The case $\alpha = 2$ gives the two dimensional incompressible Euler equation, while the $\alpha = 1$ case gives the SQG equation \eqref{rawSQG} above.

\

Front solutions to \eqref{rawSQG} refer to piecewise constant solutions taking the form
\[
\theta(t, x, y) = \begin{cases}
  \theta_+  \qquad \text{if } y > \varphi(t, x) \\
  \theta_-  \qquad \text{if } y < \varphi(t, x)
\end{cases},
\]
where the front is modeled by the graph $y = \varphi(t, x)$ with $x \in \R$. Front solutions are closely related with patch solutions
\[
\theta(t, x, y) = \begin{cases}
  \theta_+  \qquad \text{if } (x, y) \in \Omega(t) \\
  \theta_-  \qquad \text{if } (x, y) \notin \Omega(t)
\end{cases},
\]
where $\Omega$ is a bounded, simply connected domain. 

\
  In the SQG case $\alpha = 1$, the evolution equation for the front $\varphi$, derived in \cite{HSderivation, HSZderivation}, takes the form 
\begin{equation}\label{SQG}
\begin{aligned}
\partial_t\varphi(t,x) - A_\varphi \varphi_x(t, x) &= 2\log|D_x|\D_x\varphi(t,x), \\
\varphi(0,x)&=\varphi_0(x)
\end{aligned}
\end{equation}
where $\varphi$ is a real-valued function $\varphi : [0, \infty) \times \R \rightarrow \R$ and
\begin{equation}\label{A-op}
A_\varphi \varphi_x(t, x) = \int \left(\frac{1}{|y|} - \frac{1}{\sqrt{y^2+(\varphi(t,x+y)-\varphi(t,x))^2}}\right)\cdot (\varphi_x(t,x + y) - \varphi_x(t,x)) \,dy.
\end{equation}
The equation \eqref{SQG} is invariant under the transformation
\begin{align*}
    t\rightarrow\kappa t,\qquad x\rightarrow\kappa(x+\log|\kappa|t),\qquad\varphi\rightarrow\kappa\varphi,
\end{align*}
which means that $\dot{H}^{\frac{3}{2}}(\mathbb{R})$ is the corresponding critical Sobolev space.

\

When $\alpha \in (0, 1)$, contour dynamics equations for patches and fronts may be derived and analyzed in a similar way. Global well-posedness for small and localized data was established by Córdoba-Gómez-Serrano-Ionescu in \cite{GlobalPatch}.

However, when $\alpha \in [1, 2]$, the derivation of contour dynamics equations for fronts has complexities arising from the slow decay of Green's functions. The derivation in this range was addressed by Hunter-Shu \cite{HSderivation} via a regularization procedure, and again by Hunter-Shu-Zhang in \cite{HSZderivation}.

In the case of SQG patches, Gancedo-Nguyen-Patel proved in \cite{GNP} that in a suitable parametrization, the SQG front equation is locally well-posed in $H^s(\mathbb{T})$, where $s>2$. Local well-posedness for the generalized SQG family, where $\alpha\in(0,2)$ and $\alpha \neq 1$, was also considered by Gancedo-Patel in \cite{GP}, establishing in particular local well-posedness in $H^2$ for $\alpha \in (0, 1)$. 

In the case of nonperiodic SQG fronts, Hunter-Shu-Zhang studied the local well-posedness for a cubic approximation of \eqref{SQG} in \cite{HSZapprox}. In \cite{HSZglobal}, they considered the local well-posedness for the full equation \eqref{SQG} with initial data in $H^s$, $s \geq 5$, along with global well-posedness for small, localized, and essentially smooth ($s \geq 1200$) initial data. However, these well-posedness results require a small data assumption to ensure the coercivity of the modified energies used in the energy estimates, along with a convergence condition on an expansion of the nonlinearity $A_\varphi \varphi$ appearing in \eqref{SQG}. These results were extended to the range $\alpha \in (1, 2]$ for the generalized SQG family in \cite{HSZfamily}. 

In the present article, our objective is to revisit and streamline the analysis of \eqref{SQG}, while improving the established well-posedness results. Our contributions include:
\begin{itemize}
\item  offering substantial simplifications to the paradifferential analysis,
\item removing the convergence and small data assumptions in the local well-posedness result of \cite{HSZglobal},
\item establishing the local well-posedness in a significantly lower regularity setting at $1+\epsilon$ derivatives above scaling, corresponding to the classical threshold of Hughes-Kato-Marsden for nonlinear hyperbolic systems \cite{HKM}, and
\item establishing the global well-posedness in a low regularity setting, by applying the wave packet testing method of Ifrim-Tataru (see for instance \cite{ITschrodinger, ITpax}).
\end{itemize}
We anticipate that our streamlined analysis will also open the way to substantial simplifications and improvements in the analysis of related equations, including the generalized SQG family \eqref{gSQG}.

\

Our main local well-posedness result is as follows:
\begin{theorem}\label{t:lwp}
Equation \eqref{SQG} is locally well-posed for initial data in $H^s$ with $s > \frac{5}{2}$.  Precisely, for every $R > 0$, there exists $T=T(R)>0$ such that for any $\varphi_0\in H^s(\R)$ with $\|\varphi_0\|_{H^s} < R$, the Cauchy problem \eqref{SQG} has a unique solution $\varphi \in C([0, T], H^s)$. Moreover,  the solution map $\varphi_0 \mapsto \varphi$ from $H^s$ to $C([0, T], H^s)$ is continuous.
\end{theorem}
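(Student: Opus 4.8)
The approach is to treat \eqref{SQG} as a quasilinear dispersive equation of ``order $1+\log$'' and to run the standard program for such equations: paralinearize, prove an a priori energy estimate with a suitably \emph{modified energy}, construct solutions by regularization, extract uniqueness and a weak Lipschitz bound from a difference estimate, and finally upgrade to continuous dependence by a frequency envelope argument. The first step is to paralinearize the nonlocal term $A_\varphi\varphi_x$. Writing $\delta_y\varphi(x)=\varphi(x+y)-\varphi(x)$, Taylor-expanding the kernel in \eqref{A-op}, whose leading behavior near $y=0$ is $\tfrac{\varphi_x(x)^2}{2|y|}$, and inserting Bony's paraproduct decomposition, one writes $A_\varphi\varphi_x$ as a leading paradifferential operator acting on $\varphi$ plus a remainder $r$ collecting paralinearization errors, paraproduct tails, and the contributions of the kernel at $y\to 0$ and $y\to\infty$, with $\|r\|_{H^s}\le C(\|\varphi\|_{H^s})\|\varphi\|_{H^s}$. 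Combining with the linear term $2\log|D_x|\partial_x\varphi$ (symbol $2i\xi\log|\xi|$), equation \eqref{SQG} takes the form
\[
\partial_t\varphi = i T_{\mathbf a}\varphi + f(\varphi),
\]
where $iT_{\mathbf a}$ assembles the linear term and the leading paradifferential part of $A_\varphi\varphi_x$; its symbol $\mathbf a=\mathbf a(x,\xi;\varphi)$ is real to leading order (the kernel being real and even) and of order $1+\log$, with principal part comparable to $\xi\log|\xi|$ and coefficients depending on $\varphi_x$, and $f:H^s\to H^s$ satisfies $\|f(\varphi)\|_{H^s}\le C(\|\varphi\|_{H^s})\|\varphi\|_{H^s}$. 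The regularity $s>\tfrac52$ is exactly what makes $f$ perturbative: it is the Hughes--Kato--Marsden-type threshold $\varphi\in W^{2,\infty}$, one derivative (plus $\eps$) above the scaling exponent $\tfrac32$, required to estimate commutators with the coefficients of $\mathbf a$.

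The second, and main, step is the a priori estimate. The difficulty is that, although $\mathbf a$ is real, $iT_{\mathbf a}$ fails to be skew-adjoint by an operator of order $\log$ --- barely unbounded on $L^2$ --- arising from the $\partial_x\partial_\xi\mathbf a$ term in the adjoint expansion, whose coefficient involves $\varphi_x$ and $\varphi_{xx}$ and hence lies in $L^\infty$ for $s>\tfrac52$; consequently the bare Sobolev norm is not propagated with a closed bound. I would remove this defect by conjugation: construct a real symbol $b=b(x,\xi;\varphi)$ of order $0$, solving a transport equation $\{\mathbf a,b\}=(\text{the offending symbol})$ along the Hamilton flow of $\mathbf a$, so that the order-$\log$ non-skew part of $e^{-T_b}(iT_{\mathbf a})e^{T_b}$ is cancelled, leaving a skew-adjoint leading operator modulo bounded errors --- equivalently, pass to the ``good unknown'' $(I+T_b)\varphi$ --- and take the modified energy $E_s(\varphi)=\|e^{T_b}\langle D\rangle^s\varphi\|_{L^2}^2$. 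The decisive point, and the origin of the improvement over \cite{HSZglobal}, is that since $b$ has order $0$, the operator $e^{T_b}$ is bounded with bounded inverse on $H^s$ for \emph{any} symbol $b$, with norms depending on $\|\varphi\|_{H^s}$ but not requiring $b$ --- hence $\varphi$ --- to be small; therefore $E_s(\varphi)\approx_{\|\varphi\|_{H^s}}\|\varphi\|_{H^s}^2$ with no smallness assumption, which is precisely what lets the small-data and convergence hypotheses be dropped. Differentiating $E_s$ in time, the skew part contributes nothing, the bounded errors and the terms where the time derivative falls on $b$ are controlled by $C(\|\varphi\|_{H^s})E_s$ (using $s>\tfrac52$ again), and Gronwall yields, for $\|\varphi_0\|_{H^s}<R$, a time $T=T(R)>0$ with a uniform bound on $\|\varphi(t)\|_{H^s}$ over $[0,T]$.

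For existence I would regularize \eqref{SQG}, e.g.\ by a frequency truncation $P_{\le n}$ in the nonlinearity together with, if needed, a vanishing parabolic term $-\tfrac1n\partial_x^2\varphi$, solve the resulting ODE by Picard's theorem, note that the a priori bound of the previous step holds uniformly in $n$ on $[0,T(R)]$, and extract a weak-$\ast$ limit on $[0,T]$; this limit solves \eqref{SQG} and lies in $C_w([0,T],H^s)$, with strong time continuity recovered at the last step. Uniqueness --- and, at the same time, the weak Lipschitz bound needed below --- follows by running the same energy computation for the difference $w=\varphi^{(1)}-\varphi^{(2)}$ of two solutions at one derivative lower: the difference equation has the same skew leading part and source terms that are at most first order in $w$ with $H^s$-coefficients, so $\tfrac{d}{dt}\|w\|_{H^{s-1}}^2\le C(R)\|w\|_{H^{s-1}}^2$, giving $w\equiv 0$ and Lipschitz dependence of the flow in $H^{s-1}$.

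Finally, to upgrade this to continuity of the data-to-solution map into $C([0,T],H^s)$ --- and, as a by-product, to $\varphi\in C([0,T],H^s)$ rather than merely $C_w$ --- I would use frequency envelopes, following Tao--Tataru, rather than the full Bona--Smith scheme: for data $\varphi_0$ pick an admissible envelope $\{c_k\}\in\ell^2$ with $\|c\|_{\ell^2}\approx\|\varphi_0\|_{H^s}$ dominating $\{\|P_k\varphi_0\|_{H^s}\}_k$, and apply the energy estimate dyadically to propagate this to $\|P_k\varphi(t)\|_{H^s}\lesssim c_k$ on $[0,T]$. Combining the resulting uniform smallness of high-frequency tails with the $H^{s-1}$ Lipschitz bound upgrades $H^{s-1}$-convergence of solutions (for $H^s$-convergent data) to $H^s$-convergence, uniformly in $t$, which gives both the time continuity and the continuity of the solution map. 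The step I expect to be the main obstacle is the first two taken together: paralinearizing the slowly decaying nonlocal operator $A_\varphi\varphi_x$ precisely enough to pin down $\mathbf a$ and the exact size and structure of its order-$\log$ non-skew part, and then constructing the conjugating symbol $b$ so that the modified energy is simultaneously coercive for all bounded data and propagated with a closed bound. This structural input is exactly what removes the smallness and convergence conditions of \cite{HSZglobal}, while the frugality it demands --- every remainder must be shown to cost at most $1+\eps$ derivatives above scaling --- is what permits the low threshold $s>\tfrac52$.
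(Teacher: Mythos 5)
Your proposal follows the same overall strategy as the paper --- paralinearize the nonlocal term, propagate a coercive modified $H^s$ energy, construct solutions by approximation, and upgrade to continuous dependence via frequency envelopes --- but the two key technical mechanisms are genuinely different, and it is worth spelling out the trade-offs.

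The paper's paralinearization (Propositions~\ref{paralinearization-lin} and~\ref{paralinearization}) writes $A_\varphi\varphi_x = 2\D_x T_{F(\varphi_x)}\log|D_x|\varphi + \D_x T_{B^0(\varphi)}\varphi + \mathcal R(\varphi)$, so the reduced paradifferential flow is $\D_t v = 2\D_x T_{1-F(\varphi_x)}\log|D_x|v + \D_x T_{B^0}v + (\text{perturbative})$. The order-$\log$ non-skew part you correctly identify then has symbol proportional to $F'(\varphi_x)\varphi_{xx}\log|\xi|$, and the paper absorbs it by the explicit algebraic weight $g = T_{(1-F(\varphi_x))^s}|D_x|^sv$, exploiting that $\D_x(1-F(\varphi_x))^s = -s(1-F)^{s-1}F'\varphi_{xx}$ produces precisely the cancelling commutator. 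Coercivity of this weight for \emph{large} data is not automatic: the paper obtains it from the $M$-dependent paradifferential quantization and Lemma~\ref{M-paraprod}, which forces $\|T_{1-(1-F(\varphi_x))^r}\|_{L^2\to L^2}<1$ by pushing all frequency interaction above a data-dependent threshold $M(R)$. This choice of quantization, rather than any conjugation, is the paper's device for removing the smallness hypothesis of \cite{HSZglobal}. Your alternative --- conjugate by $e^{T_b}$ with $b$ of order $0$ solving $\{\mathbf a, b\}=c$ --- gets coercivity for free from invertibility of the exponential (provided you note $b\in L^\infty$, which holds since $1-F(\varphi_x)$ is bounded away from $0$ for bounded $\varphi_x$), and so avoids introducing the $M$-quantization at all. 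In this one-dimensional setting, because the correction is sought $\xi$-independent, your transport equation reduces to $\partial_x b = c/(2\partial_\xi\mathbf a)$ and has the explicit real solution $b\sim \log(1-F(\varphi_x))$ up to normalization, so it is effectively the same weight expressed as an operator exponential rather than a paraproduct; you should, however, note that $b$ must be taken proportional to $s$ (or $s+\tfrac12$) so as to also absorb the commutator $[\langle D\rangle^s,\cdot]$, exactly mirroring the exponent $s$ in the paper's $T_{(1-F(\varphi_x))^s}$. What the paper's route buys is concreteness: no symbolic calculus with $e^{T_b}$, self-adjointness of the weight is immediate from the Weyl-type form for $\eta$-independent symbols, and the coercivity lemma is a one-page argument. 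What your route buys is conceptual transparency and independence from the choice of quantization.

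Two smaller divergences. For existence, you propose frequency truncation plus vanishing viscosity and Picard; the paper instead iterates the \emph{linear} paradifferential flow of Proposition~\ref{p:linear-exist} (solve for $\varphi^{(n)}$ with frozen coefficients from $\varphi^{(n-1)}$), which meshes directly with the paralinearization and avoids checking that the viscous term is compatible with the modified energy --- either is standard. For uniqueness/weak Lipschitz dependence, you run the difference estimate in $H^{s-1}$; the paper proves it in $L^2$ by treating the difference as a solution of the linearized equation \eqref{SQG-lin} and invoking the same Proposition~\ref{p:linear-exist}, which is slightly cleaner because the remainder bound of Proposition~\ref{paralinearization-lin} is stated at the $L^2$ level. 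Both close and feed identically into the frequency envelope upgrade. Finally, be aware that the paralinearization of $A_\varphi\varphi_x$, which you compress into one sentence, is a substantial part of the work: handling the $\PVy$ distribution, the Moser--Schauder estimates of Theorem~\ref{t:moser}, and the logarithmic commutator Lemma~\ref{Log commutator estimate} is where the regularity threshold $s>\tfrac52$ is actually earned.
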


We also consider global well-posedness for small and localized data. To describe localized solutions, we define the operator 
\[
L = x+2t+2t\log|D_x|,
\]
which commutes with the linear flow $\partial_t-2\log|D_x|\D_x$, and at time $t = 0$ is simply multiplication by $x$. Then we define the time-dependent weighted energy space
\[
\|\varphi\|_X := \|\varphi\|_{H^s} + \|L\partial_x \varphi\|_{L^2},
\]
where $s>4$. To track the dispersive decay of solutions, we define the pointwise control norm 
\[
\|\varphi\|_Y := \||D_x|^{3/4-\delta}\varphi\|_{L_x^{\infty}} + \||D_x|^{2 + \delta}\varphi\|_{L_x^\infty}.
\]

\begin{theorem}\label{t:gwp}
Consider data $\varphi_0$ with
\[
\|\varphi_0\|_X \lesssim \eps \ll 1.
\]
Then the solution $\varphi$ to \eqref{SQG} with initial data $\varphi_0$ exists globally in time, with energy bounds 
\[
\|\varphi(t)\|_{X} \lesssim \eps t^{C\eps^2}
\]
and pointwise bounds 
\[
\|\varphi(t)\|_Y \lesssim \eps \langle t\rangle^{-\half}.
\]
\end{theorem}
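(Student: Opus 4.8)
The plan is to run a bootstrap argument combining the energy estimate in the space $X$ (which one should already have from the local theory, perhaps upgraded with modified energies), a weighted energy estimate using the operator $L$, and dispersive decay extracted via wave packet testing. I would posit the bootstrap assumptions
\[
\|\varphi(t)\|_X \le C_1 \eps t^{C\eps^2}, \qquad \|\varphi(t)\|_Y \le C_1 \eps \langle t\rangle^{-1/2}
\]
on a maximal time interval, and aim to recover them with constants improved to $C_1/2$. First I would show that the $\dot H^s$ part of the $X$-norm satisfies an estimate of the form $\frac{d}{dt}\|\varphi\|_{H^s}^2 \lesssim \|\varphi\|_Y^2 \|\varphi\|_{H^s}^2$ or with a single power of $\|\varphi\|_Y$, using the paralinearization of \eqref{SQG} and the modified energies from the local well-posedness analysis; feeding in the $t^{-1/2}$ pointwise decay makes the right-hand side integrable up to a logarithmic loss, which is exactly what produces the $t^{C\eps^2}$ growth factor via Gronwall. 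The same type of estimate, commuting $L\partial_x$ through the paralinearized equation (here one uses that $L$ commutes with the linear part $\partial_t - 2\log|D_x|\partial_x$), controls $\|L\partial_x\varphi\|_{L^2}$; the commutator of $L\partial_x$ with the quasilinear transport term $A_\varphi\partial_x$ and with the nonlinearity must be shown to be lower order and again controlled by $\|\varphi\|_Y$ times the $X$-norm.

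The core of the argument is the pointwise decay. Following Ifrim--Tataru, I would test the solution against a wave packet $\mathbf{u}_v$ adapted to the group velocity associated with the dispersion relation of $\partial_t - 2\log|D_x|\partial_x$ (the relevant symbol is $\tau = -2\xi\log|\xi|$, with group velocity $-2-2\log|\xi|$, matching the definition of $L$). Writing $\gamma(t,v) := \langle \varphi(t), \mathbf{u}_v\rangle$, one shows that along rays $x = $ (group velocity)$\cdot t$ the solution is well-approximated by $\gamma$, i.e. an estimate of the schematic form $|\varphi(t,x) - c\, t^{-1/2} \gamma(t,v)| \lesssim$ (something with an extra power of $t^{-1/2}$ or $t^{-1/2+\delta}$), using stationary phase and the control on $L\partial_x\varphi$ to localize in frequency. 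Then one derives an ODE for $\gamma$: differentiating in $t$ and using the equation, the linear part cancels by design of the wave packet, and the nonlinear contribution reduces—after separating resonant from nonresonant interactions—to a term of the form $\dot\gamma = i\, t^{-1} (\text{real-valued phase correction}) \gamma + (\text{error})$, where the phase correction is bounded by $\|\varphi\|_Y^2 \lesssim \eps^2 t^{-1}$. Wait—more precisely the cubic resonant term gives $|\dot\gamma| \lesssim \eps^2 t^{-1}|\gamma| + \text{error}$, so $|\gamma(t,v)|$ grows at most like $t^{C\eps^2}$; combined with the $t^{-1/2}$ from the wave packet normalization and an interpolation with the high-frequency part of $\|\varphi\|_Y$ (the $\||D_x|^{2+\delta}\varphi\|_{L^\infty}$ piece, controlled directly by Sobolev embedding from the $X$-norm), this closes the pointwise bootstrap provided $\eps$ is small and $t^{C\eps^2}\cdot t^{-1/2}$ is dominated by $t^{-1/2}$ on the relevant range—which forces the acceptance of the $\eps t^{C\eps^2}$ loss in $X$ but still yields clean $\langle t\rangle^{-1/2}$ decay in $Y$ because the $Y$-norm is scaled to absorb it.

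For the low-frequency regime $|\xi| \lesssim 1$ and times $t \lesssim 1$ I would simply use the local theory and Sobolev embedding, so the dispersive analysis is only needed for $t \gg 1$; the transition is handled by the $\langle t\rangle$ in the statement. The main obstacle I anticipate is the wave packet analysis of the nonlinearity: the transport nonlinearity $A_\varphi \varphi_x$ is quasilinear and nonlocal with a $\log$-type symbol, so (i) one must verify the wave packet remains coherent under the quasilinear flow—this likely requires first conjugating/normalizing the equation (a paradifferential change of variables straightening the transport term, as in the local theory) before testing—and (ii) one must carefully check that the cubic resonant interactions produce only a purely imaginary, real-phase coefficient so that $|\gamma|$ is almost conserved; any real part with the wrong sign would destroy the global bound. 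Controlling the nonresonant and quintic-and-higher errors will use the $X$ and $Y$ bootstrap bounds together with normal form / integration by parts in the phase. A secondary technical point is the logarithmic derivative $\log|D_x|$: its slow variation in frequency means the usual stationary phase asymptotics carry mild logarithmic corrections, which must be absorbed into the $\delta$ losses in the definition of $Y$.
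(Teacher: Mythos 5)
Your overall framework --- bootstrap on the $X$ and $Y$ norms, a Gr\"onwall-type energy estimate yielding the $t^{C\eps^2}$ growth in $X$, vector field (Klainerman--Sobolev) bounds giving decay with a $t^{C\eps^2}$ loss, and wave packet testing to remove that loss --- matches the paper's strategy. But there is a genuine gap at the decisive step. You write that the cubic resonant term gives $|\dot\gamma| \lesssim \eps^2 t^{-1}|\gamma| + \text{error}$, conclude that $|\gamma(t,v)|$ grows at most like $t^{C\eps^2}$, and then attempt to close by asserting that ``$t^{C\eps^2}\cdot t^{-1/2}$ is dominated by $t^{-1/2}$ on the relevant range.'' That last inequality is simply false for large $t$; with only the modulus bound $|\gamma|\lesssim t^{C\eps^2}$, wave packet testing gives nothing beyond the vector field estimate $\|\varphi\|_Y\lesssim\eps\,t^{-1/2+C\eps^2}$ that you already had, and the bootstrap does not close.

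The paper's Proposition~\ref{Asymptotic equation} gives the precise ODE
$\dot\gamma(t,v) = i\,q(\xi_v)\,\xi_v\,t^{-1}\,|\gamma|^2\gamma + f(t,v)$
with $q(\xi_v)\xi_v$ \emph{real} (so the cubic coefficient is purely imaginary) and $|f|\lesssim\lambda^{-1/4}t^{-6/5+C\eps^2}\eps$, which is integrable in $t$. The imaginary cubic term drops out of $\tfrac{d}{dt}|\gamma|^2 = 2\operatorname{Re}(\bar\gamma\,\dot\gamma) = 2\operatorname{Re}(\bar\gamma f)$, so $|\gamma(t,v)|$ is conserved up to $\int|f|\,dt\lesssim\eps$, i.e.\ bounded \emph{uniformly in $t$} with no $t^{C\eps^2}$ factor; this, combined with the approximation lemma relating $\varphi$ to $t^{-1/2}\gamma$, is what delivers the clean $\langle t\rangle^{-1/2}$ decay. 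You do identify this requirement in your list of obstacles (``the cubic resonant interactions produce only a purely imaginary ... coefficient so that $|\gamma|$ is almost conserved''), but your main argument discards exactly this structure by taking absolute values, which is where the conservation law is lost. The fix is to use $\tfrac{d}{dt}|\gamma|^2 = 2\operatorname{Re}(\bar\gamma f)$ in place of a Gr\"onwall bound on $|\dot\gamma|$, and the remaining technical work is then to establish the integrable decay of the error $f$ (the paper's $t^{-6/5}$), which requires the careful balanced/unbalanced decomposition of the cubic nonlinearity rather than a preliminary paradifferential straightening of the transport term.
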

 Further, the solution  $\varphi$ exhibits a modified scattering behavior, with an asymptotic profile $W$, in a sense that will be made precise in Section~\ref{s:scattering}. There, we also observe that \eqref{SQG} has a conserved $L^2$ mass. We remark that  Hunter-Shu-Zhang \cite{HSZfamily} makes a similar observation regarding mass conservation for the generalized SQG in the range $\alpha \in (1, 2]$.

\

Our paper is organized as follows. In Section~\ref{s:notation}, we establish notation and preliminaries used through the rest of the paper. We define a parameter-dependent paradifferential quantization which will ensure coercivity in our energy estimates, and which is key in removing the small data assumption in the local well-posedness theory. We also establish Moser estimates to be applied toward the paralinearization of $A_\varphi \varphi$, as well as a more general linearized counterpart $A_{\varphi}v$. Lastly, we record some elementary lemmas involving difference quotients.

In Section~\ref{s:paralinearization}, we paralinearize the operator $A_\varphi v$, up to a perturbative error. We then apply this result to reduce the analysis of both the equation \eqref{SQG} and its linearization 
\begin{equation}\label{SQG-lin}
    \partial_tv - \D_x A_\varphi v =2\log|D_x| \D_xv
\end{equation}
to the analysis of a paradifferential flow with perturbative source.

In Section~\ref{s:energy}, we prove energy estimates for the paradifferential flow of Section \ref{s:paralinearization}, by defining modified energies that are comparable to the classical Sobolev energies in the spaces $H^s(\mathbb{R})$. This part crucially uses the quantization established in Section~\ref{s:notation} to remove the small data assumption made in the local well-posedness result from \cite{HSZglobal}.

In Section~\ref{s:lwp}, we prove Theorem~\ref{t:lwp}, the local well-posedness result for \eqref{SQG}. We do this by first using an iterative scheme to construct smooth solutions, and then using the method of frequency envelopes to lower the regularity exponent for the solutions. This method was introduced by Tao in \cite{25} in order to better track the evolution of energy distribution between dyadic frequencies. A systematic presentation of the use of frequency envelopes in the study of local well-posedness theory for quasilinear problems can be found in the expository paper \cite{ITprimer}. 

In Section~\ref{s:gwp} we use the wave packet testing method of Ifrim-Tataru to prove the global-wellposedness part of Theorem \ref{t:gwp}, along with the dispersive bounds for the resulting solution. This method is systematically presented in \cite{ITpax}. 

Finally, in Section~\ref{s:scattering} we provide a short proof for the mass conservation for the solutions of \eqref{SQG}. Then we discuss the modified scattering behavior of the global solutions constructed in Section \ref{s:gwp}.

\subsection{Acknowledgements} The first author was supported by the NSF grant DMS-2220519 and the RTG in Analysis and Partial Differential equations grant DMS-2037851. The second author was supported by the NSF
grant DMS-2054975, as well as by the Simons Foundation. The authors would like to thank Mihaela Ifrim and Daniel Tataru for many helpful discussions.

\section{Notation and preliminaries}\label{s:notation}

To facilitate the analysis of the operator $A_\varphi$, we define the smooth function 
\begin{equation}\label{F-def}
F(s) = 1 - \frac{1}{\sqrt{1+s^2}},
\end{equation}
which in particular vanishes to second order at $s = 0$, satisfying $F(0)=F'(0)=0$. Using this notation, we define the following generalization of \eqref{A-op}:
\begin{equation}\label{A-op-lin}
    (A_{\varphi}v)(t,x) = \int F(\dq^y\varphi(t,x)) \cdot \sdq^y v(t,x)\,dy.
\end{equation}
This generalized operator will be useful when we study the linearized equation.

We let $0 < \delta < s - \frac52$ refer to a small positive exponent throughout. Implicit constants may depend on our choice of $\delta$.

\subsection{Paradifferential operators and paraproducts}

Let $\chi$ be an even smooth function such that $\chi=1$ on $[-\frac{1}{20}, \frac{1}{20}]$ and $\chi = 0$ outside $[-\frac{1}{10}, \frac{1}{10}]$, and define
\[
\tilde{\chi}(\theta_1,\theta_2) = \chi\left(\frac{|\theta_1|^2}{M^2+|\theta_2|^2}\right).
\]
Given a symbol $a(x,\eta)$,  we use the above cutoff symbol $\tilde \chi$ to define an $M$ dependent paradifferential quantization of $a$ by (see also \cite{ABZgravity}) 
\begin{align*}
    \widehat{T_au}(\xi)=(2\pi)^{-1}\int \hat{P}_{>M}(\xi)\tilde \chi\left(\xi - \eta, \xi + \eta \right) \hat{a}(\xi-\eta,\eta)\hat{P}_{>M}(\eta)\hat{u}(\eta)\,d\eta,
\end{align*}
where the Fourier transform of the symbol $a=a(x,\eta)$ is taken with respect to the first argument. We make the following remarks:
\begin{itemize}
\item The Fourier transform, and in particular $T_a$ and Littlewood-Paley projections, will all operate with respect to the $x$ variable throughout. 
\item On the Fourier side, the support conditions on $\hat{P}_{>M}$ and $\tilde{\chi}$ imply that $|\eta|\approx |\xi|$. 
\item When $a$ is independent of $\eta$, the above definition coincides with the Weyl quantization, which means that the operator $T_a$ will be self-adjoint if $a$ is real valued. This will be useful when proving energy estimates in Section \ref{s:energy}.
 \end{itemize}

Implicit constants may depend on $M \gg 1$, which we view as a constant parameter except in Section~\ref{s:lwp}. There, we will choose $M$ using the following lemma, which will be key for establishing local well-posedness for large data:
\begin{lemma}\label{M-paraprod}
Let $R > 0$, $r\geq 1$, and $s > \half$. There exists $M$ such that $\|T_{1-(1-F(u))^r}\|_{L^2 \rightarrow L^2} < 1$ for any $u$ such that $\|u\|_{H^s} \leq R$.
\end{lemma}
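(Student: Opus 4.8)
The plan is to exploit the operator norm bound for paradifferential quantization: for a symbol $b(x)$ independent of $\eta$, one has $\|T_b\|_{L^2 \to L^2} \lesssim \sup_\xi \|\hat{P}_{\sim \xi} \hat b\|$, and more usefully $\|T_b\|_{L^2\to L^2}$ is controlled by a Besov-type norm of the low-frequency part of $b$ together with a gain coming from the $M$-dependent cutoff $\tilde\chi$, which forces both input and output frequencies to be $\gtrsim M$. The key point is that $T_b$ only sees the portion of $b$ at frequencies $\lesssim$ the input/output frequency, but since the cutoff $\hat P_{>M}$ restricts those to be $\gtrsim M$, the symbol $b$ effectively enters only through its frequencies that are a small fraction of something $\gtrsim M$; combined with the fact that $b = 1 - (1-F(u))^r$ has a fixed $H^{s}$ (hence $C^{s-1/2}$) size depending only on $R$, this tail can be made small by taking $M$ large.

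First I would write $b = 1 - (1-F(u))^r$ and observe that $b$ is a smooth function of $u$ vanishing at $u = 0$ (since $F(0) = 0$ implies $(1-F(0))^r = 1$), so by the Moser estimates recorded in Section~\ref{s:notation} we have $\|b\|_{H^s} \leq C(R)$ whenever $\|u\|_{H^s} \leq R$; in particular $\|b\|_{C^{s-1/2}} \leq C(R)$ with $s - \tfrac12 > 0$. Next I would recall the standard bound for this quantization: $\|T_b f\|_{L^2}^2 \lesssim \sum_k \| (P_k T_b f) \|_{L^2}^2$, and on each dyadic block $P_k T_b f$ only involves $P_{<k - c} b$ (the usual paraproduct frequency localization, sharpened by the support of $\tilde\chi$) acting on $P_{\sim k} f$, with the additional constraint from $\hat P_{>M}$ that $k \gtrsim \log_2 M$. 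Therefore
\[
\|T_b\|_{L^2 \to L^2} \lesssim \sup_{k \,:\, 2^k \gtrsim M} \|P_{<k} b\|_{L^\infty}.
\]
Then I would split $P_{<k} b = P_{<k_0} b + P_{[k_0, k)} b$ for a threshold $k_0$ to be chosen: the high-frequency piece is bounded using the $C^{\sigma}$ norm, $\|P_{[k_0,k)} b\|_{L^\infty} \lesssim 2^{-\sigma k_0}\|b\|_{C^\sigma} \leq 2^{-\sigma k_0} C(R)$ with $\sigma = s - \tfrac12 > 0$, which is small once $k_0$ is large. The genuinely low-frequency piece $P_{<k_0} b$ is more delicate since it need not be small in $L^\infty$ — but here is where the $\hat P_{>M}$ cutoff does the real work: if $M$ (hence the minimal allowed $k$) is taken larger than $2^{k_0 + C}$, then for every admissible output block $k$ the symbol frequencies below $k_0$ are separated from the frequency-$\sim k$ behavior of the input by the cutoff $\tilde\chi\left(\xi - \eta, \xi+\eta\right)$, whose argument $\tfrac{|\xi - \eta|^2}{M^2 + |\xi+\eta|^2}$ with $|\xi - \eta| \lesssim 2^{k_0}$ and $|\xi + \eta| \approx 2^k \lesssim$ but also $M^2$ in the denominator — so in fact the relevant mechanism is that the low frequencies of $b$ do contribute, and one must instead bound $\|P_{<k_0}b\|_{L^\infty} \leq \|b\|_{L^\infty} \leq C(R)$ and absorb it by noting the $\tilde\chi$ cutoff with large $M$ kills this interaction entirely. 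Concretely, choose $k_0$ with $2^{-\sigma k_0} C(R) < \tfrac14$, then choose $M$ so large that $\tilde\chi(\theta_1, \theta_2) = 0$ whenever $|\theta_1| \leq 2^{k_0+2}$ and $|\theta_2| \gtrsim M$ — which holds once $M \gg 2^{k_0}$ since then $|\theta_1|^2/(M^2 + |\theta_2|^2) \leq 2^{2k_0+4}/M^2 < \tfrac1{20}$ forces $\chi = 1$, not $0$ — hmm, this shows the opposite.

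Let me restate the mechanism correctly. The cutoff forces $|\xi - \eta| \lesssim$ a small multiple of $|\xi + \eta| \approx |\eta| \approx |\xi|$, so $T_b$ is a genuine paraproduct by the low frequencies of $b$ relative to $f$, with no lower bound on which frequencies of $b$ appear; thus the honest bound is $\|T_b\|_{L^2\to L^2} \lesssim \|b\|_{L^\infty} + (\text{a piece going to }0)$, which is $\lesssim C(R)$, not small. So the gain must come differently: I would instead choose $M$ large depending on $\|b\|_{C^\sigma}$ as follows. The cutoff $\tilde\chi$ forces $|\xi - \eta|^2 \lesssim \frac{1}{10}(M^2 + |\xi+\eta|^2)$; on the region $|\xi+\eta| \lesssim M$ this gives $|\xi-\eta|\lesssim M$ — no gain — but on $|\xi + \eta| \gg M$ it gives $|\xi - \eta| \ll |\xi+\eta| \approx |\xi|$, the usual paraproduct regime, and there the symbol frequency $|\xi - \eta|$, while it can be small, when it is $\gtrsim 1$ contributes $\lesssim \|b\|_{C^\sigma}$ times a summable factor; the frequencies $|\xi - \eta| \lesssim M$ with $|\xi+\eta|\gtrsim M^2$ contribute $\lesssim \sup_{2^j \lesssim M}\|P_j b\|_{L^\infty}$. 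The clean statement, which I would prove by a direct $TT^*$/Schur estimate on the kernel, is: $\|T_b\|_{L^2\to L^2} \lesssim \sup_{k}\|P_{\leq k - \log_2 M + C} b\|_{L^\infty}$, i.e. the relevant low-frequency truncation of $b$ is pushed down by $\log_2 M$. Since $\|P_{\leq j} b\|_{L^\infty} \to 0$ as $j \to -\infty$ is false for general $b$ — but $b$ has $b \in H^s \subset L^2$ plus vanishing mean? It need not. The resolution, and the step I expect to be the main obstacle, is that one genuinely needs $\widehat b$ to be small near frequency $0$; this holds because $b = 1 - (1-F(u))^r$ with $u \in H^s$, $s > 1/2$, so $b \in H^s$ too, hence $\|P_{\leq j} b\|_{L^\infty} \lesssim 2^{j(s - 1/2)}\|b\|_{H^s} \to 0$ as $j \to -\infty$ uniformly for $\|u\|_{H^s}\leq R$. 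Therefore
\[
\|T_b\|_{L^2 \to L^2} \;\lesssim\; \sup_k \|P_{\leq k - \log_2 M + C} b\|_{L^\infty} \;\lesssim\; \sup_{j}\; 2^{(j - \log_2 M)(s-1/2)} \|b\|_{H^s} \;\lesssim\; M^{-(s - 1/2)} C(R),
\]
which is $< 1$ once $M$ is chosen large enough depending on $R$. I would organize the final writeup as: (1) Moser bound on $\|b\|_{H^s}$; (2) the kernel/Schur estimate giving $\|T_b\|_{L^2\to L^2}\lesssim \sup_k \|P_{\leq k - \log_2 M + C}b\|_{L^\infty}$, which follows from the almost-orthogonality of the output blocks together with the $\tilde\chi$ support condition; (3) the Bernstein-type decay $\|P_{\leq m} b\|_{L^\infty}\lesssim 2^{m(s-1/2)}\|b\|_{H^s}$; (4) combine and choose $M = M(R)$. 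The main obstacle is step (2): making precise how the $M$-dependent cutoff $\tilde\chi$ translates into the $\log_2 M$ downward shift of the admissible symbol frequencies uniformly in the output frequency $k$, including carefully handling the transition region $|\xi + \eta| \sim M$.
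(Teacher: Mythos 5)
The approach is wrong, and the gap is fundamental. Your final claim
\[
\|T_b\|_{L^2\to L^2}\lesssim \sup_{k}\|P_{\leq k-\log_2 M+C}b\|_{L^\infty}\lesssim M^{-(s-1/2)}C(R)
\]
is false, because the $M$-dependent cutoff does not push the admissible symbol frequencies down by $\log_2 M$. On the support of $\tilde\chi$ one only has $|\xi-\eta|^2\lesssim M^2+|\xi+\eta|^2$; at the smallest admissible output frequency $|\xi|\approx M$ this permits $|\xi-\eta|$ as large as $\sim M$, so every frequency of $b$ between $0$ and $\sim M$ genuinely contributes. The honest bound is $\|T_b\|_{L^2\to L^2}\lesssim \sup_k\|P_{\leq k+C}b\|_{L^\infty}\approx\|b\|_{L^\infty}$ (plus a small high-frequency tail), not something decaying in $M$. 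A concrete counterexample: take $b$ a fixed smooth bump at unit frequency with $\|b\|_{L^\infty}\approx c$, and test on $f$ supported at frequency $\sim 2M$; there $\tilde\chi\equiv 1$ and $T_bf\approx bf$, so $\|T_b\|\gtrsim c$ independently of $M$. (Also, as an aside, your Bernstein-type step $\|P_{\leq j}b\|_{L^\infty}\lesssim 2^{j(s-1/2)}\|b\|_{H^s}$ is false for $s>1$; the correct low-frequency decay rate is $2^{j/2}$. But this is a secondary issue.)

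What you are missing is the actual source of coercivity in the statement: it is \emph{not} that $\|T_a\|$ can be made small by taking $M$ large, but that $\|a\|_{L^\infty}$ is \emph{strictly less than} $1$, uniformly over $\|u\|_{H^s}\leq R$, because of the specific form of $F$. Indeed $1-F(u)=(1+u^2)^{-1/2}$ is bounded below by $(1+C^2R^2)^{-1/2}>0$ via Sobolev embedding ($s>1/2$), so $(1-F(u))^r$ is bounded below away from $0$ and hence $a=1-(1-F(u))^r\leq 1-\delta(R)<1$. The paper's proof then splits $T_a$ at symbol frequency $M/2$: on $|\xi-\eta|\leq M/2$ the cutoff $\tilde\chi$ is identically $1$ and the operator is bounded by $\|P_{\leq M/2}a\|_{L^\infty}\leq\|a\|_{L^\infty}$; the complementary piece is controlled by $\|P_{>M/2}a\|_{L^\infty}\lesssim M^{-(s-1/2)}\|a\|_{H^s}$, which \emph{is} a genuine high-frequency tail and \emph{does} vanish as $M\to\infty$. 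Large $M$ is used only to absorb that small tail, driving $\|T_a\|$ below $\|a\|_{L^\infty}+\delta/2<1$; it cannot, and need not, make the norm small. Without invoking $\|a\|_{L^\infty}<1$, the lemma cannot be proved, and your argument never uses it.
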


\begin{proof}
On $|\xi-\eta| \leq \frac{M}{2}$, we have $ \tilde{\chi}(\xi-\eta,\xi + \eta)=1$. Thus we may write
 \begin{align*}
    2\pi \cdot \widehat{T_a f}(\xi) &= \int_{|\xi-\eta|\leq \frac{M}{2}}\hat{P}_{>M}(\xi)\hat{a}(\xi-\eta)\hat{P}_{>M}(\eta)\hat{f}(\eta)\,d\eta \\
    &\quad + \int_{|\xi-\eta|>\frac{M}{2}}\hat{P}_{>M}(\xi)\tilde{\chi}(\xi-\eta,\xi+\eta)\hat{a}(\xi-\eta)\hat{P}_{>M}(\eta)\hat{f}(\eta)\,d\eta
 \end{align*}
and conclude
 \begin{align*}
     \|T_a f\|_{L^2}  &\leq (\|P_{\leq \frac{M}{2}}a\|_{L^\infty} + C\|P_{> \frac{M}{2}}a\|_{L^\infty})\|f\|_{L^2}.
 \end{align*}
 
 We have by Sobolev embedding
 \[
 \|P_{> \frac{M}{2}}a\|_{L^\infty} \lesssim M^{-\frac{\delta}{2}}\|a\|_{H^s}
 \]
so that for $a = 1-(1-F(u))^r$ with $\|u\|_{H^s} \leq R$, we have 
 \[
 \|P_{> \frac{M}{2}}a\|_{L^\infty} \lesssim_R M^{-\frac{\delta}{2}}.
 \]
Further, using the form of $F$ combined with Sobolev embedding, we have $\displaystyle \|a\|_{L^\infty} < 1 - \delta$ uniformly over $\|u\|_{H^s} \leq R$, for some $\delta > 0$ depending on $R$. Choosing $M$ sufficiently large depending on $R$, we obtain 
\[
\|T_{1-(1-F(u))^r}\|_{L^2\rightarrow L^2} \leq \|1-(1-F(u))^r\|_{L^\infty} + \frac{\delta}{2} < 1 - \frac{\delta}{2}.
\]

 \end{proof}

\subsection{Classical estimates}

We recall the following Moser-Schauder estimates. Note that in the present article, we typically take $F$ to be given by \eqref{F-def}.

\begin{theorem}[Moser-Schauder]\label{t:moser}
Let $s \geq 0$ and $F:\R\rightarrow\R$ be a smooth function satisfying $F(0)=0$.

a) If $f\in H^s(\R)\cap L^\infty(\R)$,
\begin{equation}\label{moser}
    \|F(f)\|_{H^s}\lesssim_{F,\|f\|_{L^\infty}} \|f\|_{H^s}.
\end{equation}

b)  If $\partial_x^{-1}f\in H^s(\R)\cap L^\infty(\R)$ and $F'(0)=0$, we have
\begin{equation}\label{moser2}
\begin{aligned}
    \|F(f) - T_{F'(f)} f \|_{H^{s}} &\lesssim_{F,\|f\|_{W^{1,\infty}}}  \|f\|_{W^{1,\infty}}\|\partial_x^{-1}f\|_{H^s}, \\
\|F(f) - T_{F'(f)} f \|_{\dot H^{s}}&\lesssim_{F,\|f\|_{W^{1,\infty}}}\|f\|_{W^{1,\infty}} \|\partial_x^{-1}f\|_{\dot H^s}.
\end{aligned}
\end{equation}

c) Under the hypotheses of b), in the case $s = 0$, we have
\begin{equation}
 \|F(f) - T_{F'(f)} f \|_{L^2}\lesssim_F \|f\|_{W^{1,\infty}}\|\partial_x^{-1}f\|_{L^2}.  
\end{equation}
\end{theorem}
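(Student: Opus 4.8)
The plan is to deduce everything from part (b), which is the substantive statement: part (a) is the standard Moser estimate, and part (c) is the $s=0$ endpoint of (b). For (a), one telescopes the Littlewood--Paley truncations, writing $F(f)=\sum_k m_k P_k f$ with $m_k=\int_0^1 F'(P_{<k}f+tP_kf)\,dt$, so that $\|m_k\|_{L^\infty}\lesssim_{F,\|f\|_{L^\infty}}1$; since $m_k P_k f$ has frequency $\lesssim 2^k$ and $\|m_k P_k f\|_{L^2}\lesssim\|P_k f\|_{L^2}$, the bound \eqref{moser} follows from a routine square function estimate.

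For (b), set $g:=\partial_x^{-1}f$, so that $P_k f=\partial_x P_k g$ and $\|P_k f\|_{L^2}\sim 2^k\|P_k g\|_{L^2}$. Using the same telescoping together with the paraproduct expansion $T_a h=\sum_k P_{<k-c}(a)\,P_k h$, write
\[
F(f)-T_{F'(f)}f=\sum_k b_k\,P_k f,\qquad b_k:=m_k-P_{<k-c}\big(F'(f)\big),
\]
and split $b_k=\big(m_k-F'(P_{<k}f)\big)+\big(F'(P_{<k}f)-F'(P_{<k-c}f)\big)+\big(F'(P_{<k-c}f)-P_{<k-c}F'(f)\big)$. The key point is that each of the three brackets is bounded in $L^\infty$ by $\lesssim_{F,\|f\|_{L^\infty}}2^{-k}\|f\|_{W^{1,\infty}}$: the first by a second-order Taylor expansion, which produces a factor $\|P_k f\|_{L^\infty}\lesssim 2^{-k}\|f_x\|_{L^\infty}$; the second by the mean value theorem and $\|(P_{<k}-P_{<k-c})f\|_{L^\infty}\lesssim 2^{-k}\|f_x\|_{L^\infty}$; the third by inserting $F'(f)$ and using $\|P_{\geq k-c}f\|_{L^\infty}\lesssim 2^{-k}\|f_x\|_{L^\infty}$ together with $\|P_{\geq k-c}F'(f)\|_{L^\infty}\lesssim 2^{-k}\|\partial_x F'(f)\|_{L^\infty}=2^{-k}\|F''(f)f_x\|_{L^\infty}\lesssim 2^{-k}\|f_x\|_{L^\infty}$. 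Consequently
\[
\|b_k\,P_k f\|_{L^2}\lesssim 2^{-k}\|f\|_{W^{1,\infty}}\|P_k f\|_{L^2}\sim\|f\|_{W^{1,\infty}}\|P_k g\|_{L^2}.
\]

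Since $b_k\,P_k f$ has frequency $\lesssim 2^{k+c}$, this gives $\|P_j(F(f)-T_{F'(f)}f)\|_{L^2}\lesssim\|f\|_{W^{1,\infty}}\sum_{k\geq j-c}\|P_k g\|_{L^2}$. For $s>0$ the $\dot H^s$ bound now follows by multiplying by $2^{js}$ and applying Young's inequality, the right side being the convolution of the $\ell^2$ sequence $(2^{ks}\|P_k g\|_{L^2})_k$ with the kernel $2^{(j-k)s}\mathbf{1}_{k\geq j-c}$, which lies in $\ell^1$ exactly because $s>0$; the inhomogeneous bound follows by adjoining the $L^2$ estimate of (c). I expect the bookkeeping of the paraproduct remainder terms (the index shifts, the low-frequency truncation of $F'(f)$) to be the only real nuisance here.

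The case (c), $s=0$, is genuinely more delicate and is the main obstacle: at $s=0$ the kernel $2^{(j-k)s}\mathbf{1}_{k\geq j-c}$ degenerates to a constant, the crude tail-sum bound from (b) fails, and one must use the \textbf{structure} of the remainder rather than only the size of its pieces. Splitting $b_k\,P_k f$ by output frequency, the part with output frequency comparable to $2^k$ is handled as in (b), the sum over $k\geq j-c$ becoming finite-range; for the part with low output frequency, which is concentrated in the balanced-paraproduct-type quadratic expression $\sim\sum_k F''(P_{<k}f)(P_k f)^2$, one writes $(P_k f)^2=\partial_x\big(P_k g\cdot P_k f\big)-P_k g\cdot\partial_x P_k f$ to move a derivative onto $g=\partial_x^{-1}f$; the resulting $2^{-k}$ gain converts the offending constant kernel into the summable kernel $2^{j-k}\mathbf{1}_{k\geq j-c}$ (up to a finer analysis, via duality against an $L^2$ test function, of the $P_k g\cdot\partial_x P_k f$ piece), after which Young's inequality again closes the estimate with constant $\|f\|_{W^{1,\infty}}\|\partial_x^{-1}f\|_{L^2}$.
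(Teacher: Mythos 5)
Your approach shares the paper's core idea — telescope the Littlewood–Paley truncations to write $F(f)$ as a sum of pieces with symbol that vanishes at $0$, and exploit $F'(0)=0$ to extract a factor $2^{-k}$ converting $P_k f$ into $P_k(\partial_x^{-1}f)$ — and the three-bracket $L^\infty$ estimates on $b_k$ are each correct. However, there is a genuine gap at the step ``Since $b_k\,P_k f$ has frequency $\lesssim 2^{k+c}$.'' This is false: $m_k=\int_0^1 F'(P_{<k}f+tP_kf)\,dt$ is a smooth nonlinearity of a frequency-localized input, so it has frequency content at \emph{all} scales, with only rapid (but not compactly supported) decay above $2^k$; hence $b_k\,P_k f$ contributes to every output frequency. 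Since your entire summation argument — both the Young's-inequality step in (b) and the ``finite-range'' dichotomy in (c) — rests on this false support claim, the proof does not close as written.

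The paper avoids this by performing the paraproduct split \emph{inside} the telescoping sum: $F'(f_{\leq_h k})\cdot f_k = \big[F'(f_{\leq_h k})\big]_{<k-4}f_k + \big[F'(f_{\leq_h k})\big]_{\geq k-4}f_k =: B_1^{(k)}+B_2^{(k)}$. The low-frequency piece $B_1^{(k)}$ genuinely has frequency $\sim 2^k$, so orthogonality is available; the high-frequency piece $B_2^{(k)}$ is where the unbounded spectrum lives, and it is controlled not by crude $L^\infty$ size but by the \emph{rapid decay} $\|[F'(f_{\leq_h m})]_k\|_{L^\infty}\lesssim_N 2^{-N(k-m)}$ coming from the smoothness of $F$ and the frequency localization of $f_{\leq_h m}$. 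That extra $2^{-Nj}$ is exactly what you are missing, and it is also what makes the $s=0$ case unproblematic: the paper simply takes $N=0$ there (so the constant degrades from $K(\|f\|_{L^\infty},\|f_x\|_{L^\infty})$ to $\|f_x\|_{L^\infty}$), with no need for the integration-by-parts device $(P_kf)^2=\partial_x(P_kg\cdot P_kf)-P_kg\cdot\partial_xP_kf$ you propose. To repair your argument you would need to split $b_k$ into its low-frequency part (where your bound and the support claim are fine) and its high-frequency tail (where you must invoke the $2^{-Nj}$ decay), which is exactly the paper's $B_1$/$B_2$ decomposition in different notation.
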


\begin{proof}
For a proof of a), see \cite[Lemma A.9]{DispersiveTao}. Here, we prove parts b) and c). In the following, denote
\[
f_{\leq_h k} := f_{\leq k - 1} + h f_k.
\]
Then write
\begin{equation*}\begin{aligned}
  F(f)&=\sum_{\substack{k=-\infty}}^\infty\int_0^1 F'(f_{\leq_h k})\cdot f_k\,dh \\
     &=\int_0^1\sum_{\substack{k=-\infty}}^\infty\left( F'(f_{\leq_h k})_{<k-4} + F'(f_{\leq_h k})_{\geq k - 4}\right)\cdot f_k \,dh\\
     &:=\int_0^1 B_1+B_2 \, dh.
\end{aligned}\end{equation*}

We consider $B_2$. Write
\begin{equation*}\begin{aligned}
\|B_2 \|_{\dot H^s}^2 &\lesssim\sum_{\substack{k=-\infty}}^\infty\left\|\sum_{\substack{j=-4}}^\infty F'(f_{\leq_h k - j})_k\cdot f_{k-j}\right\|_{\dot H^s}^2.
\end{aligned}\end{equation*}
We estimate the summand by
\begin{equation*}\begin{aligned}
  \left\|\sum_{\substack{j=-4}}^\infty F'(f_{\leq_h k - j})_k\cdot f_{k - j} \right\|_{\dot H^s}&\lesssim \sum_{\substack{j=-4}}^\infty \left\|| D_x|^s F'(f_{\leq_h k - j})_k\right\|_{L^\infty} \left\|f_{k - j}\right\|_{L^2}\\
  &\lesssim \sum_{\substack{j=-4}}^\infty 2^{sk-j}\left\|\left(F''(f_{\leq_h k - j})\cdot\D_x f_{\leq_h k - j}\right)_k\right\|_{L^\infty} \left\|\D_x^{-1}f_{k - j}\right\|_{L^2}.
\end{aligned}\end{equation*}
By the chain rule, for each $j \geq -4$,
\begin{equation*}\begin{aligned}
    \left\|\left(F''(f_{\leq_h k - j})\cdot\D_x f_{\leq_h k - j}\right)_k\right\|_{L^\infty}&\lesssim_{N} 2^{-Nj}K(\|f\|_{L^\infty},\|f_x\|_{L^\infty}).
\end{aligned}\end{equation*}
Thus,
\begin{equation*}\begin{aligned}
    \left\|\sum_{\substack{j=-4}}^\infty F'(f_{\leq_h k - j})_k\cdot f_{k - j} \right\|_{\dot H^s} &\lesssim K(\|f\|_{L^\infty},\|f_x\|_{L^\infty}) \sum_{\substack{j=-4}}^\infty 2^{(s-N-1)j}\left\|\D_x^{-1}f_{k-j}\right\|_{\dot H^s}.
\end{aligned}\end{equation*}
In this case of the frequencies $k \geq 0$ in $B_2$,
\begin{equation*}\begin{aligned}
    \sum_{\substack{k=0}}^\infty\left\|\sum_{\substack{j=-4}}^\infty F'(f_{\leq_h k - j})_k\cdot f_{k-j}\right\|_{\dot H^s}^2&\lesssim K(\|f\|_{L^\infty},\|f_x\|_{L^\infty})^2\sum_{k=0}^\infty  \left(\sum_{\substack{j=-4}}^\infty 2^{(s-N-1)j}\left\|\D_x^{-1}f_{k-j}\right\|_{\dot H^s}\right)^2\\
    &\lesssim K(\|f\|_{L^\infty},\|f_x\|_{L^\infty})^2\sum_{k=0}^\infty\left\|\D_x^{-1}f_k\right\|_{\dot H^s}^2\\
    &\lesssim K(\|f\|_{L^\infty},\|f_x\|_{L^\infty})^2\|\D_x^{-1}f\|_{\dot H^s}^2, 
\end{aligned}\end{equation*}
where $ N$ has been chosen such that $ N+1>s$.
In particular, when $s=0$, we may choose $N = 0$ to obtain
\begin{equation*}\begin{aligned}
     \sum_{\substack{k=0}}^\infty\left\|\sum_{\substack{j=-4}}^\infty F'(f_{\leq_h k - j})_k\cdot f_{k-j}\right\|_{L^2}^2\lesssim \|f_x\|_{L^\infty}^2\|\D_x^{-1}f\|_{L^2}^2.
\end{aligned}\end{equation*}
For the low frequency terms $k \leq 0$ in $B_2$, using the fact that $F'(0)=0$,
\begin{equation*}\begin{aligned}
    \sum_{\substack{k=-\infty}}^{-1}\left\|\sum_{\substack{j=-4}}^\infty F'(f_{\leq_h k - j})_k\cdot f_{k-j}\right\|_{\dot H^s}^2 &\lesssim \sum_{\substack{k=-\infty}}^{-1}\left\|\sum_{\substack{j=-4}}^\infty F'(f_{\leq_h k - j})_k\cdot f_{k-j}\right\|^2_{L^2}\\
    &\lesssim \sum_{\substack{k=-\infty}}^{-1}\left(\sum_{\substack{j=-4}}^\infty\left\|F'( f_{\leq_h k - j})_k\right\|_{L^\infty}\|f_{k-j}\|_{L^2}\right)^2\\
    &\lesssim \sum_{\substack{k=-\infty}}^{-1}\left(\sum_{\substack{j=-4}}^\infty2^{k-j}\|f\|_{L^\infty}\left\|\D_x^{-1}f\right\|_{L^2}\right)^2\\
    &\lesssim \|f\|^2_{L^\infty}\left\|\D_x^{-1}f\right\|^2_{L^2}.
\end{aligned}\end{equation*}
We conclude in the case $s > 0$ that
\begin{equation*}\begin{aligned}
   \|B_2 \|_{\dot H^s}&\lesssim K(\|f\|_{L^\infty},\|f_x\|_{L^\infty})\|\D_x^{-1}f\|_{\dot H^s}+\|f\|_{L^\infty}\left\|\D_x^{-1}f\right\|_{L^2} \\
   &\lesssim K(\|f\|_{L^\infty},\|f_x\|_{L^\infty})\|\D_x^{-1}f\|_{H^s},
\end{aligned}\end{equation*}
and
\begin{equation*}\begin{aligned}
    \|B_2 \|_{L^2} \lesssim \|f\|_{W_x^{1,\infty}}\|\D_x^{-1}f\|_{L^2}
\end{aligned}\end{equation*}
when $s=0$.

We now exchange $B_1$ for
\begin{equation}\label{preparaprod}
   \sum_{\substack{k=-\infty}}^\infty F'(f)_{<k-4}\cdot f_k.
\end{equation}
The summand of the difference
\begin{equation*}\begin{aligned}
    &\sum_{\substack{k=-\infty}}^\infty \left(F'(f)-F'(f_{\leq_h k})\right)_{<k-4}\cdot f_k
\end{aligned}\end{equation*}
can be estimated by
\begin{equation*}\begin{aligned}
  &\left\| 2^k\left(F'(f)-F'(f_{\leq_h k})\right)_{<k-4}\cdot 2^{-k}f_k\right\|_{\dot H^s} \\
  &\quad \lesssim \left\|2^k\int_0^1 \left(F''(f_{\leq_h k}+\theta((1-h)f_k+f_{>k})) \cdot ((1-h)f_k+f_{>k}) \right)_{<k-4} \,d\theta\right\|_{L^\infty}\|\D_x^{-1}f_k\|_{\dot H^s}\\
  &\quad \lesssim  \left(\int_0^1\|f_x\|_{L^\infty}\,d\theta\right)\|\D_x^{-1}f_k\|_{\dot H^s} \lesssim \|f_{x}\|_{L^\infty}\|\D_x^{-1}f_k\|_{\dot H^s}
\end{aligned}\end{equation*}
so that summing orthogonally, we have the desired bound. 

It remains to exchange \eqref{preparaprod} with $T_{F'(f)}f$. Write
\[
F'(f)_{<k-4}\cdot f_k = T_{F'(f)_{< k-4}}f_k + T_{f_k}F'(f)_{<k-4} + \Pi(F'(f)_{<k-4},f_k).
\]
The second term contributes
\begin{equation*}\begin{aligned}
    \left\|T_{f_k}F'(f)_{<k-4}\right\|_{\dot H^s} \lesssim \|\D_x f\|_{L_x^\infty}\|\D_x^{-1}f_k\|_{\dot H^s} 
\end{aligned}\end{equation*}
which sums orthogonally over $k$. The third term is estimated similarly. For the first, using cancellation with $T_{F'(f)}f_k$, it remains to estimate
\begin{equation*}\begin{aligned}
\sum_{\substack{k=-\infty}}^\infty T_{F'(f)_{\geq k-4}}f_k,
\end{aligned}\end{equation*}
which can be done by noting that
\begin{equation*}\begin{aligned}
   \left\|T_{F'(f)_{\geq k-4}} f_k\right\|_{\dot H^s} &\lesssim \|\left(F''(f)f_x\right)_{\geq k-4}\|_{L^\infty}\left\|\D_x^{-1}f_k\right\|_{\dot H^s}\lesssim \|\D_x f\|_{L^\infty}\|\D_x^{-1}f_k\|_{\dot H^s}
\end{aligned}\end{equation*}
which is estimated in a similar way as in the frequency-balanced case.
\end{proof}

We will use the following logarithmic commutator estimate:

\begin{lemma}\label{Log commutator estimate}
  We have for $s \geq 0$,
\begin{align*}
  \|\D_x [T_f,\log\vert\partial_x\vert] g \|_{H^s} &\lesssim \| f\|_{W^{1,\infty}}\|g\|_{H^s}.
\end{align*}  
\end{lemma}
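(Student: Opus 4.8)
The plan is to move to the Fourier side, isolate the cancellation carried by the difference $\log|\eta|-\log|\xi|$ over the low--high frequency region where $T_f$ is supported, and then recognize the outcome as an honest, bounded paraproduct acting on $\partial_x f$.

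First I would compute, directly from the definition of $T_a$,
$$\widehat{\D_x[T_f,\log|\partial_x|]g}(\xi)=\frac{1}{2\pi}\int \hat P_{>M}(\xi)\,\tilde\chi(\xi-\eta,\xi+\eta)\,\hat P_{>M}(\eta)\; i\xi\bigl(\log|\eta|-\log|\xi|\bigr)\,\hat f(\xi-\eta)\,\hat g(\eta)\,d\eta .$$
On the support of the integrand one has $|\xi|,|\eta|\gtrsim M$ and $10|\xi-\eta|^2\le M^2+|\xi+\eta|^2$; as already remarked this forces $|\xi|\approx|\eta|$, and a one-line check rules out the opposite-sign configuration (there $|\xi+\eta|=\bigl||\xi|-|\eta|\bigr|\le|\xi-\eta|$, which would force $|\xi-\eta|\lesssim M$, impossible since $|\xi|+|\eta|\gtrsim M$). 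Hence $\xi$ and $\eta$ have the same sign, $\bigl||\xi|-|\eta|\bigr|=|\xi-\eta|$, and writing $\log|\eta|-\log|\xi|=-\int_{|\xi|}^{|\eta|}t^{-1}\,dt$ yields the key gain of one derivative,
$$\bigl|\,\xi\,(\log|\eta|-\log|\xi|)\,\bigr|\le\frac{|\xi|}{\min(|\xi|,|\eta|)}\,|\xi-\eta|\lesssim|\xi-\eta| .$$
In fact the function $b(\xi,\eta):=\xi(\log|\eta|-\log|\xi|)/(\xi-\eta)$ is homogeneous of degree $0$ and, on the cone $\{\xi\eta>0,\ |\xi|\approx|\eta|\}$, extends smoothly across $\xi=\eta$ (with value $-1$ there), so it satisfies the Coifman--Meyer symbol bounds $|\partial_\xi^a\partial_\eta^b\,b(\xi,\eta)|\lesssim_{a,b}|\xi|^{-a-b}$ on that region.

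Next, using $i\xi(\log|\eta|-\log|\xi|)=i(\xi-\eta)\,b(\xi,\eta)$ and absorbing $i(\xi-\eta)\hat f(\xi-\eta)=\widehat{\partial_x f}(\xi-\eta)$, the formula above becomes
$$\widehat{\D_x[T_f,\log|\partial_x|]g}(\xi)=\frac{1}{2\pi}\int \sigma(\xi-\eta,\eta)\,\widehat{\partial_x f}(\xi-\eta)\,\hat g(\eta)\,d\eta,\qquad \sigma:=\hat P_{>M}(\xi)\,\tilde\chi(\xi-\eta,\xi+\eta)\,\hat P_{>M}(\eta)\,b(\xi,\eta).$$
By the previous step, $\sigma$ is bounded, supported in $|\xi-\eta|\lesssim|\eta|\approx|\xi|$, and obeys the Coifman--Meyer symbol estimates, so the bilinear operator it defines is a paraproduct of exactly the same type as $T_{(\cdot)}$. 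I would then invoke the standard Littlewood--Paley paraproduct argument: localizing the output to frequency $2^k$ gives $\|P_k(\D_x[T_f,\log|\partial_x|]g)\|_{L^2}\lesssim\|P_{<k-c}\partial_x f\|_{L^\infty}\,\|\tilde P_k g\|_{L^2}\lesssim\|\partial_x f\|_{L^\infty}\,\|\tilde P_k g\|_{L^2}$ (using that $P_{<k}$ is bounded on $L^\infty$ uniformly in $k$, and that only $|\eta|\approx|\xi|$ contributes), and summing with weights $2^{2sk}$ via almost orthogonality yields, for $s\ge0$,
$$\|\D_x[T_f,\log|\partial_x|]g\|_{H^s}\lesssim\|\partial_x f\|_{L^\infty}\,\|g\|_{H^s}\le\|f\|_{W^{1,\infty}}\,\|g\|_{H^s}.$$

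The one substantive point is the estimate $\bigl|\xi(\log|\eta|-\log|\xi|)\bigr|\lesssim|\xi-\eta|$ of the second step: it is exactly what turns $\D_x[T_f,\log|\partial_x|]$ from a nominally order-one operator into an order-zero one, and it is precisely tight enough that the factor $(\xi-\eta)^{-1}$ produced by pulling $\partial_x$ onto $f$ is absorbed, so that only $\|\partial_x f\|_{L^\infty}$ is actually needed. Everything after that reduction is the routine boundedness of a Coifman--Meyer paraproduct, essentially the same computation behind the $L^2\to L^2$ bound for $T_a$ used in Lemma~\ref{M-paraprod}.
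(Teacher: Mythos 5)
Your proof is correct and follows essentially the same route as the paper's: both pass to the Fourier side, rewrite $i\xi(\log|\eta|-\log|\xi|)$ as $\widehat{\partial_x f}(\xi-\eta)$ times the bounded symbol $-\log|1+\tfrac{\eta-\xi}{\xi}|/\tfrac{\eta-\xi}{\xi}$ (which is exactly your $b(\xi,\eta)$), and then invoke Coifman--Meyer paraproduct bounds. You spell out a few details the paper leaves implicit (the same-sign observation and the explicit Littlewood--Paley summation), but the central mechanism is identical.
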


\begin{proof}
We have
\begin{align*}
\mathcal F (\D_x [T_{f},\log\vert\partial_x\vert] g) (\xi)&=\int \hat{P}_{>M}(\xi)\tilde{\chi}(\xi-\eta,\xi+\eta)\hat{f}(\xi-\eta)\hat{P}_{>M}(\eta)\hat{g}(\eta)i\xi(\log|\eta|-\log|\xi|)\,d\eta\\
&=-\int \hat{P}_{>M}(\xi) \tilde{\chi}(\xi-\eta,\xi+\eta)\widehat{\D_x f}(\xi-\eta)\hat{P}_{>M}(\eta)\hat{g}(\eta)\frac{\log\left|1+\frac{\eta-\xi}{\xi}\right|}{\frac{\eta-\xi}{\xi}}\,d\eta.
\end{align*}

Observe that $\log|1 + x|/x$ is bounded away from $x = -1$. Thus, setting
\begin{align*}
    \tilde{\chi}_1(\xi-\eta,\xi+\eta)=\tilde{\chi}(\xi-\eta,\xi+\eta)\hat {P}_{>M}(\eta)\frac{\log\left|1+\frac{\eta-\xi}{\xi}\right|}{\frac{\eta-\xi}{\xi}},
\end{align*}
and observing that $(\eta - \xi)/\xi$ remains away from $-1$ on the support of $\tilde{\chi}$ and $\hat{P}_{>M}$, where $|\eta| \geq M$ and $|\xi| \lesssim |\eta|$, we see that $\tilde{\chi}_1$ is bounded. Further, since $\tilde{\chi}_1$ satisfies the same kind of bounds as $\tilde{\chi}$, we can apply paraproduct Coifman-Meyer estimates to deduce the desired result.

\end{proof}

We also recall the following variant of \cite[Lemma 2.5]{AIT}.

\begin{lemma}[Para-products]\label{l:para-prod}
Assume that $\gamma_1, \gamma_2 < 1$, $\gamma_1+\gamma_2 \geq 0$. Then
\begin{equation}\label{para-prod}
\| T_f T_g - T_{fg} \|_{\dot H^{s} \to \dot H^{s+\gamma_1+\gamma_2}} \lesssim 
\||D|^{\gamma_1}f \|_{BMO}\||D|^{\gamma_2}g\|_{BMO}.
\end{equation}
When $\gamma_2=1$,
\begin{equation}\label{para-prod-2}
\| T_f T_g - T_{fg} \|_{\dot H^{s} \to \dot H^{s+\gamma_1+1}} \lesssim 
\||D|^{\gamma_1}f \|_{BMO}\|g_x\|_{L^\infty}.
\end{equation}
\end{lemma}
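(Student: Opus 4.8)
The statement to prove is Lemma~\ref{l:para-prod}, the paraproduct composition estimate \eqref{para-prod} and \eqref{para-prod-2}.

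\medskip

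\textbf{Proof proposal.} The plan is to work entirely on the Fourier side, writing the kernel of $T_f T_g - T_{fg}$ as a bilinear Fourier multiplier acting on $(f,g)$ applied to $u$, and then to reduce the bound to a Coifman--Meyer type estimate after peeling off the appropriate powers of frequency. First I would write out the composition: with the quantization from Section~\ref{s:notation}, $\widehat{T_f T_g u}(\xi)$ is an iterated integral over intermediate frequencies, and $\widehat{T_{fg}u}(\xi)$ involves $\widehat{fg}(\xi - \eta) = \int \hat f(\xi - \zeta)\hat g(\zeta - \eta)\,d\zeta$. Subtracting, the difference is $\int\int m(\xi,\zeta,\eta)\,\hat f(\xi-\zeta)\hat g(\zeta-\eta)\hat u(\eta)\,d\zeta\,d\eta$, where $m$ is the difference of two products of cutoff symbols $\tilde\chi$ and $\hat P_{>M}$ factors. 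The key point is that on the support of the first term both $|\xi-\zeta|\ll|\zeta|\approx|\xi|$ and $|\zeta-\eta|\ll|\eta|\approx|\zeta|$, hence also $|\xi-\zeta|,|\zeta-\eta|\ll|\xi|\approx|\eta|$, so $m$ is supported where the two ``low'' frequencies $\xi-\zeta$ and $\zeta - \eta$ are both much smaller than the output/input frequency $|\xi|\approx|\eta|$; the symbol $m$, together with its derivatives, satisfies the Coifman--Meyer/Hörmander--Mikhlin bounds $|\partial^\alpha m|\lesssim (|\xi-\zeta|+|\zeta-\eta|)^{-|\alpha|}$ there (this uses that $\tilde\chi$ and $\hat P_{>M}$ are smooth and homogeneous-type of the right scaling, and that the region avoids the diagonal degeneracies — the same kind of verification as in Lemma~\ref{Log commutator estimate}).

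\medskip

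Next I would extract the gain. Since $|\xi - \zeta|\approx |\zeta - \eta|$ is \emph{not} generally true, the honest statement is that $m$ is supported in the region where $\max(|\xi-\zeta|,|\zeta-\eta|)\lesssim \min(|\xi|,|\eta|)$ and where the two low frequencies are comparable up to the standard paraproduct trichotomy; in the genuinely off-diagonal piece one of them dominates and one has the usual paraproduct structure, while the cancellation $T_fT_g - T_{fg}$ kills precisely the region where $|\xi-\zeta|$ and $|\zeta-\eta|$ are both $\ll |\xi|$ but the two symbols agree. After this reduction, write $m = |\xi-\zeta|^{-\gamma_1}|\zeta-\eta|^{-\gamma_2}\cdot\tilde m$ with $\tilde m$ a bounded Coifman--Meyer symbol (the condition $\gamma_1,\gamma_2 < 1$ guarantees the extracted powers do not destroy the symbol bounds near the zero set of the low frequencies, and $\gamma_1+\gamma_2\geq 0$ ensures the total output regularity shift is consistent). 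Then $T_fT_g - T_{fg}$ acting on $\dot H^s$ is, modulo $\tilde m$, the operator $u\mapsto$ paraproduct of $(|D|^{\gamma_1}f)$, $(|D|^{\gamma_2}g)$, and $u$, which by the Coifman--Meyer theorem (in its endpoint $BMO$ form — using that the low-frequency factors enter through an $L^\infty$-type, hence $BMO$, norm after the frequency truncation) maps $\dot H^s \to \dot H^{s+\gamma_1+\gamma_2}$ with norm $\lesssim \||D|^{\gamma_1}f\|_{BMO}\||D|^{\gamma_2}g\|_{BMO}$. For \eqref{para-prod-2} with $\gamma_2 = 1$ one replaces the $BMO$ bound on $|D|g$ by the stronger $\|g_x\|_{L^\infty}$, which is exactly what is needed since $|D|^{1}$ sits at the boundary $\gamma_2 = 1$ excluded above; the Coifman--Meyer argument goes through verbatim with $\|g_x\|_{L^\infty}$ in place of $\||D|g\|_{BMO}$.

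\medskip

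\textbf{Main obstacle.} The delicate point is the verification that the multiplier $m$ — after extracting $|\xi - \zeta|^{-\gamma_1}|\zeta-\eta|^{-\gamma_2}$ — is a legitimate Coifman--Meyer symbol on the relevant support, in particular that no singularity is introduced at $\xi = \zeta$ or $\zeta = \eta$ (which is why $\gamma_1,\gamma_2<1$ is imposed) and that the cancellation between $T_fT_g$ and $T_{fg}$ genuinely confines the symbol to the ``doubly low-high'' region so that the output frequency $|\xi|\approx|\eta|$ can absorb the full $\gamma_1 + \gamma_2$ derivatives. Once the symbol bounds are in hand, invoking the $BMO$-endpoint Coifman--Meyer estimate and summing over Littlewood--Paley pieces is routine; I would cite \cite[Lemma 2.5]{AIT} for the analogous statement and only indicate the modifications needed to track the two separate exponents $\gamma_1,\gamma_2$ and the $M$-dependent truncation $\hat P_{>M}$, which is harmless since it only further restricts frequencies.
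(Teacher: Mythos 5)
The paper does not prove this lemma; it is recorded as a variant of \cite[Lemma~2.5]{AIT} and no argument is given, so there is no internal proof to compare against. Judged on its own, your outline is the natural one and hits the main ideas: reduce to a trilinear multiplier, use the cancellation between the composition symbol and the single-paraproduct symbol, extract the fractional powers onto $f$ and $g$, and close with a $BMO$-endpoint paraproduct/Coifman--Meyer bound. Two points, however, are glossed over in a way that leaves real gaps.

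First, the cancellation has to be made quantitative, not merely a support statement. It is not enough to observe that the two symbols agree where $|\xi-\zeta|,|\zeta-\eta|\ll|\xi|$; one also needs the bound $|\sigma(\xi,\zeta,\eta)|\lesssim \min(|\xi-\zeta|,|\zeta-\eta|)/|\eta|$ on the transitional region where $\max(|\xi-\zeta|,|\zeta-\eta|)\approx|\eta|$, which comes from the $O(|\xi|)$-scale smoothness of $\tilde\chi$ and the identity $\xi-\eta=(\xi-\zeta)+(\zeta-\eta)$. After extracting $|\xi-\zeta|^{-\gamma_1}|\zeta-\eta|^{-\gamma_2}|\eta|^{\gamma_1+\gamma_2}$, it is precisely this extra factor that yields $|\tilde m|\lesssim (\min/\max)^{1-\gamma_i}$ when one low frequency is $\ll|\eta|$, and boundedness of $\tilde m$ (hence the $\ell^1$-summability over Littlewood--Paley scales) is exactly where $\gamma_1,\gamma_2<1$ is used. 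Your phrase ``the extracted powers do not destroy the symbol bounds near the zero set of the low frequencies'' gestures at this but does not carry the estimate; without the $\min/\max$ factor the argument does not close.

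Second, the endpoint \eqref{para-prod-2} with $\gamma_2=1$ does not follow ``verbatim.'' At $\gamma_2=1$ the gain above degenerates to $(\min/\max)^0=1$, so the dyadic sum over the low frequency of $g$ is no longer absolutely convergent from the pointwise symbol bound alone. One must instead exploit the Carleson-measure / square-function structure underlying the $BMO$-endpoint paraproduct estimate (the same mechanism that makes $\|T_F w\|_{L^2}\lesssim\|F\|_{BMO}\|w\|_{L^2}$ without an $\ell^1$ loss over frequencies), with $g_x$ playing the role of the $BMO$ symbol; this is a genuine modification of the argument for $\gamma_2<1$, not a cosmetic substitution of $\|g_x\|_{L^\infty}$ for $\||D|g\|_{BMO}$. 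If you intend to cite a Coifman--Meyer theorem, make sure the version you invoke actually covers this endpoint in the needed form.
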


\

\subsection{Difference quotients}

We denote difference quotients by 
\[
\dq^yh(x)=\frac{h(x+y)-h(x)}{y}, \qquad \sdq^yh(x)=\frac{h(x+y)-h(x)}{| y|}
\]
and the distribution $\PVy$ by
\[
\left\langle \PVy, f \right\rangle = \int_{|y|\geq 1}\frac{f(y)}{|y|}\,dy+\int_{|y|<1}\frac{f(y)-f(0)}{|y|}\,dy.
\]
Equivalently,
\begin{align*}
    \left\langle \PVy, f \right\rangle = \lim_{\eps\rightarrow 0} \left(\int_{|y|>\eps} \frac{f(y)}{|y|}\,dy+2\log(\eps)f(0)\right).
\end{align*}

We also use the following estimates which we will apply to averages over difference quotients:

\begin{lemma}\label{l:yavg}
We have
\[
\left \|\int \frac{1}{|y|} f(x, y) \, dy \right\|_{H^s_x} \lesssim \sup_{|y| > 1} \||y|^{\delta} f\|_{H^s_x} + \sup_{|y| \leq 1} \||y|^{-\delta} f\|_{H^s_x}
\]
and
\[
\left \|\int \frac{1}{|y|} f(x, y) \, dy \right\|_{L_x^\infty} \lesssim  \sup_{|y| > 1} \||y|^{\delta} f\|_{L^\infty_x} + \sup_{|y| \leq 1} \||y|^{-\delta} f\|_{L^\infty_x}.
\]
\end{lemma}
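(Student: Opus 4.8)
The plan is to prove Lemma~\ref{l:yavg} by splitting the $y$-integral into the regions $|y| > 1$ and $|y| \leq 1$ and handling each by a weighted triangle inequality (Minkowski's integral inequality), exactly as one does for vector-valued integrals in $H^s$ and $L^\infty$.

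First I would treat the tail $|y| > 1$. Here $\frac{1}{|y|} = \frac{1}{|y|^{1+\delta}} \cdot |y|^{\delta}$, so by Minkowski's integral inequality applied in the $H^s_x$ (resp. $L^\infty_x$) norm,
\[
\left\| \int_{|y|>1} \frac{1}{|y|} f(x,y)\,dy \right\|_{H^s_x} \leq \int_{|y|>1} \frac{1}{|y|^{1+\delta}} \, \||y|^{\delta} f(\cdot, y)\|_{H^s_x} \, dy \lesssim \sup_{|y|>1} \||y|^{\delta} f\|_{H^s_x},
\]
since $\int_{|y|>1} |y|^{-1-\delta}\,dy < \infty$ for $\delta > 0$. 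The same computation with $H^s_x$ replaced by $L^\infty_x$ gives the tail part of the second estimate.

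Next I would treat the inner region $|y| \leq 1$, where instead I write $\frac{1}{|y|} = \frac{1}{|y|^{1-\delta}} \cdot |y|^{-\delta}$ and again apply Minkowski's integral inequality:
\[
\left\| \int_{|y|\leq 1} \frac{1}{|y|} f(x,y)\,dy \right\|_{H^s_x} \leq \int_{|y|\leq 1} \frac{1}{|y|^{1-\delta}} \, \||y|^{-\delta} f(\cdot,y)\|_{H^s_x}\,dy \lesssim \sup_{|y|\leq 1} \||y|^{-\delta} f\|_{H^s_x},
\]
using $\int_{|y|\leq 1} |y|^{-1+\delta}\,dy < \infty$ for $\delta > 0$; likewise for $L^\infty_x$. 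Adding the two regional bounds yields both claimed inequalities.

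There is no serious obstacle here; the only point requiring minor care is the justification of Minkowski's integral inequality in the $H^s_x$ norm, i.e. that $\left\| \int g(\cdot, y)\,dy\right\|_{H^s_x} \leq \int \|g(\cdot, y)\|_{H^s_x}\,dy$, which follows since $H^s$ is a Banach space and the norm is given by an $L^2$ integral against the weight $\langle \xi\rangle^{2s}$ on the Fourier side — one can pass the Fourier transform through the $y$-integral and apply the standard Minkowski inequality for the mixed norm $L^2_\xi L^1_y \hookleftarrow L^1_y L^2_\xi$. Implicitly one also needs the integrals to converge absolutely, which is guaranteed precisely by the finiteness of the suprema on the right-hand side together with the integrability of the weights $|y|^{-1\mp\delta}$ on their respective regions; if a supremum is infinite there is nothing to prove.
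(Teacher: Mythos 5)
Your proof is correct and is essentially identical to the paper's, which also splits the $y$-integral at $|y|=1$ and estimates each piece by the corresponding weighted supremum; you have simply spelled out the Minkowski inequality and the convergence of the weight integrals that the paper leaves implicit.
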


\begin{proof}

We decompose the integral
\[
\int\frac{1}{|y|} f(x, y) \, dy = \int_{|y| > 1} + \int_{|y| \leq 1}
\]
and estimate
\[
\left \|\int_{|y| > 1} \frac{1}{|y|} f(x, y) \, dy \right\|_{H^s_x} \lesssim \sup_{|y| > 1} \||y|^{\delta} f\|_{H^s_x}, \quad \left \|\int_{|y| \leq 1} \frac{1}{|y|} f(x, y) \, dy \right\|_{H^s_x} \lesssim \sup_{|y| \leq 1} \||y|^{-\delta} f\|_{H^s_x}
\]
with similar estimates for $L^\infty$.

\end{proof}

\begin{lemma}\label{Trilinear integral estimate}
Let $i = \overline{1,n}$ and $p_i, r \in [1, \infty]$ and $\alpha_i, \beta_i \in [0, 1]$ satisfying
\[
\sum_{i}\frac{1}{p_i} = \frac{1}{r}, \qquad n-1 < \sum_{i}\alpha_i\leq n, \qquad 0 \leq \sum_{i}\beta_i < n-1.
\]
Then
\[
 \left\|\int \text{sgn}(y) \prod \dq^yf_i \,dy\right\|_{L_x^r} \lesssim \prod \||D|^{\alpha_i} f_i\|_{L^{p_i}} + \prod \||D|^{\beta_i} f_i\|_{L^{p_i}}.
 \]

\end{lemma}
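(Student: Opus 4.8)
The plan is to reduce the multilinear integral estimate to a product of pointwise estimates on the individual difference quotients, using the elementary bound $|\dq^y f(x)| \lesssim \min(|y|^{-1} |f(x+y)| + |y|^{-1}|f(x)|,\ \|f_x\|_{L^\infty})$ together with the Lemma~\ref{l:yavg} mechanism for integrating in $y$ against $|y|^{-1}$. More precisely, I would first observe that
\[
\int \sgn(y)\, \prod_{i=1}^n \dq^y f_i(x)\,dy = \int \frac{1}{|y|}\, g(x,y)\,dy, \qquad g(x,y) := \sgn(y)\,|y|\,\prod_{i=1}^n \dq^y f_i(x),
\]
so by Lemma~\ref{l:yavg} it suffices to control $\sup_{|y| > 1} \||y|^\delta g\|_{L^r_x}$ and $\sup_{|y| \leq 1} \||y|^{-\delta} g\|_{L^r_x}$ (using the $H^s$ version is not needed here since the target is $L^r_x$; I would state and use the obvious $L^r_x$ analogue of Lemma~\ref{l:yavg}, which has the same proof). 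The point is that $|y|\prod \dq^y f_i$ carries a total homogeneity of $|y|^{1-n}$ worst-case near $y=0$, and decay like $|y|$ times $n$ bounded factors for large $y$; the two terms $\prod \||D|^{\alpha_i}f_i\|$ and $\prod\||D|^{\beta_i}f_i\|$ on the right will arise respectively from the small-$y$ and large-$y$ regimes.

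For the large-$|y|$ regime, I would bound each difference quotient crudely by $|\dq^y f_i(x)| \le |y|^{-1}(|f_i(x+y)| + |f_i(x)|)$, expand the product, and apply Hölder in $x$ with exponents $p_i$ and then Hölder/Young in $y$; the $|y|$ factor from $g$ against the $|y|^{-n}$ from the $n$ difference quotients, combined with the requirement $\sum \beta_i < n - 1$, leaves enough negative power of $|y|$ to integrate $\sup_{|y|>1}|y|^\delta(\cdots)$ after trading $|y|^{-1}$ factors for fractional derivatives via the one-dimensional bound $\||y|^{-\beta}(f(\cdot+y) - f(\cdot))\|_{L^p} \lesssim |y|^{1-\beta}\||D|^\beta f\|_{L^p}$ when $\beta \in [0,1]$ — actually for the large-$y$ piece it is cleaner to keep the raw $f_i(x\pm y)$ and use translation invariance of $L^{p_i}$, converting to $\||D|^{\beta_i}f_i\|_{L^{p_i}}$ only where a $\beta_i>0$ is genuinely needed to gain summable $y$-decay; I will allocate the $\beta_i$ so that exactly $\sum\beta_i < n-1$ units of homogeneity are used, matching the hypothesis. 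For the small-$|y|$ regime I would instead use $|\dq^y f_i(x)| \lesssim |y|^{\alpha_i - 1}\, |D|^{\alpha_i}\text{-type local oscillation}$; concretely, via the identity $f_i(x+y) - f_i(x) = \int_0^1 y f_i'(x + ty)\,dt$ when $\alpha_i = 1$ and interpolation with $\alpha_i = 0$, so that $\prod_i |\dq^y f_i| \lesssim |y|^{\sum\alpha_i - n}\prod_i M(|D|^{\alpha_i}f_i)(x)$-type expressions, and the condition $\sum\alpha_i > n-1$ makes $|y|\cdot|y|^{\sum\alpha_i - n} = |y|^{\sum\alpha_i - n + 1}$ integrable near $0$ even after multiplying by $|y|^{-\delta}$ (choosing $\delta$ small relative to $\sum\alpha_i - (n-1)$). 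Hölder in $x$ with the exponents $p_i$ then closes the estimate.

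The main technical obstacle is handling the endpoint/borderline cases: when some $\alpha_i = 1$ or $p_i = \infty$ the naive "distribute the derivative onto each factor" argument via maximal functions or square functions degenerates, and when $r = \infty$ one cannot use $L^2$-orthogonality. I would route around this by never differentiating a single factor beyond first order and by using only $L^\infty \to L^\infty$ and $L^{p}\to L^p$ mapping properties of translation and of the elementary commutator-type bounds, i.e. working entirely with honest pointwise inequalities of the form $|f(x+y) - f(x)| \le |y|^{\beta}\,(\,|D|^{\beta}f\text{-controlled quantity})(x)$ — for $\beta\in[0,1)$ this follows from the integral representation of $|D|^\beta$ and for $\beta = 1$ from the fundamental theorem of calculus — so that the entire argument is a Hölder inequality in $x$ followed by a one-variable integral in $y$, with the two hypothesized inequalities on $\sum\alpha_i$ and $\sum\beta_i$ precisely guaranteeing absolute convergence of that $y$-integral near $0$ and near $\infty$ respectively. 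A secondary bookkeeping point is that the two right-hand terms are genuinely needed separately (no single choice of exponents handles both regimes), so I would organize the proof as "split $\int_{|y|\le 1} + \int_{|y|>1}$, estimate each by one of the two products," which also makes transparent why $n - 1$ appears as the threshold in both constraints.
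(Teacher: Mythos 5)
Your proposal is correct and takes essentially the same approach as the paper: split into $\int_{|y|\le 1}+\int_{|y|>1}$, apply H\"older in $x$ with exponents $p_i$, and use the one-variable bound $\|\dq^y f_i\|_{L^{p_i}}\lesssim|y|^{\gamma-1}\||D|^{\gamma}f_i\|_{L^{p_i}}$ for $\gamma\in[0,1]$ (taking $\gamma=\alpha_i$ near zero and $\gamma=\beta_i$ near infinity), so that the hypotheses $\sum\alpha_i>n-1$ and $\sum\beta_i<n-1$ give $y$-integrability on the respective regions; the detour through Lemma~\ref{l:yavg} and the mention of maximal functions are unnecessary but harmless. One small slip to flag: the inequality you wrote, $\||y|^{-\beta}(f(\cdot+y)-f(\cdot))\|_{L^p}\lesssim|y|^{1-\beta}\||D|^{\beta}f\|_{L^p}$, has a spurious factor of $|y|$ — the correct scaling is $\|f(\cdot+y)-f(\cdot)\|_{L^p}\lesssim|y|^{\beta}\||D|^{\beta}f\|_{L^p}$ — though this does not affect the logic.
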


\begin{proof}
We write 
\[
\int \text{sgn}(y) \prod \dq^yf_i \,dy = \int_{|y| \leq 1} + \int_{|y| > 1}.
\]
For the former integral, we have by H\"older
\[
 \left\| \int_{|y| \leq 1} \text{sgn}(y) \prod \dq^yf_i \,dy \right\|_{L_x^r} \lesssim \int_{|y| \leq 1} \frac{1}{|y|^{n - \sum \alpha_i}} \prod \||D|^{\alpha_i} f_i\|_{L^{p_i}} \, dy \lesssim \prod \||D|^{\alpha_i} f_i\|_{L^{p_i}}.
\]
The latter integral is treated similarly.
\end{proof}

\section{Paralinearizations and the linearized equation}\label{s:paralinearization}

The objective of this section is to paralinearize the operator $A_\varphi v$ defined in \eqref{A-op-lin}. With this paralinearization, we reduce the analysis of \eqref{SQG} and its linearization to the analysis of a paradifferential flow with perturbative source.

The analysis in this section is time independent. We will use the notation $B^0$ to denote the zeroth order coefficient of the paralinearization,
\[
B^0(\varphi) =\langle \PVy,F(\dq^y\varphi)\rangle-F(\varphi_x)\langle\PVy,e^{-iy}\rangle.
\]

\begin{proposition}\label{paralinearization-lin}
We have the paralinearization
\[
A_\varphi v = T_{B^0(\varphi)} v - 2 T_{F(\varphi_x)} \log|D_x| v + \mathcal{R}(\varphi, v)
\]
where
\begin{align*}
    \|\D_x \mathcal{R}(\varphi, v)\|_{L^2}\lesssim_{\|\varphi_x\|_{C^{1,\delta}}}\||D_x|^{-\delta}\varphi_x\|^2_{C^{1,2\delta}}\|v\|_{L^2}.
\end{align*}
\end{proposition}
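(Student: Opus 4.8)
## Proof Plan for Proposition~\ref{paralinearization-lin}

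The plan is to decompose $A_\varphi v = \int F(\dq^y\varphi)\cdot \sdq^y v\,dy$ by paralinearizing each of the two factors in the integrand and then reassembling. First I would apply the Moser estimate \eqref{moser2} from Theorem~\ref{t:moser} to the first factor, writing $F(\dq^y\varphi) = T_{F'(\dq^y\varphi)}\dq^y\varphi + E(y)$, where the remainder $E(y)$ obeys, for each fixed $y$, an estimate of the form $\|\partial_x E(y)\|_{L^2} \lesssim \|\dq^y\varphi\|_{W^{1,\infty}}\|\partial_x^{-1}\dq^y\varphi\|_{\dot H^1}$; combined with the difference-quotient bounds and Lemma~\ref{l:yavg}, the $y$-integral of $E(y)\cdot\sdq^y v$ will land in the perturbative error $\mathcal{R}$ with the stated $|D_x|^{-\delta}$-type gain. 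For the second factor $\sdq^y v$ I would expand $\sdq^y v(x) = \frac{1}{|y|}(e^{iy D_x} - 1)v$ on the Fourier side, so that the operator $v\mapsto \int (\cdots)\sdq^y v\,dy$ becomes, after the paralinearization of the first factor, a paradifferential operator whose symbol is essentially $\int F'(\dq^y\varphi)(x)\cdot \frac{e^{iy\eta}-1}{|y|}\,dy$ localized appropriately in frequency.

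Next I would analyze this symbol. The key is that $\int \frac{e^{iy\eta}-1}{|y|}\,dy$ — interpreted via the $\PVy$ regularization introduced before Lemma~\ref{l:yavg} — produces precisely a $-2\log|\eta|$ term plus a constant (this is the standard computation $\langle\PVy, e^{iy\eta}\rangle = -2\log|\eta| + c$), which is the source of the $-2T_{F(\varphi_x)}\log|D_x|v$ term and part of the $B^0$ term. More precisely, writing $F'(\dq^y\varphi) = F'(\varphi_x) + (F'(\dq^y\varphi) - F'(\varphi_x))$ and using that $F'(\dq^y\varphi)-F'(\varphi_x)$ vanishes suitably as $y\to 0$, the $F'(\varphi_x)$ piece paired against $\sdq^y v$ and integrated in $y$ gives $-2F(\varphi_x)\log|D_x|$ up to the constant $F(\varphi_x)\langle\PVy,e^{-iy}\rangle$ correction (matching the definition of $B^0$), while the difference piece paired against $\sdq^y v$ and integrated gives exactly the symbol $\langle\PVy, F(\dq^y\varphi)\rangle$ after an integration by parts in $y$ relating $\partial_x$ of $F'(\dq^y\varphi)\cdot\dq^y\varphi$-type quantities to $F(\dq^y\varphi)$; this produces the $T_{B^0(\varphi)}v$ term. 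The leftover — the difference between the true paradifferential operator with this symbol and $T_{B^0(\varphi)}v - 2T_{F(\varphi_x)}\log|D_x|v$, together with low-frequency ($P_{\leq M}$) contributions and the Moser remainder terms — goes into $\mathcal{R}$.

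For the error bound on $\mathcal{R}$, I would collect all the leftover pieces and estimate them using: (i) Lemma~\ref{l:yavg} to handle the $\int \frac{1}{|y|}(\cdots)\,dy$ structure, splitting $|y|\le 1$ and $|y|>1$ and extracting $|y|^{\pm\delta}$ weights; (ii) Lemma~\ref{Trilinear integral estimate} for the multilinear-in-difference-quotient terms; (iii) the Moser estimate Theorem~\ref{t:moser}(c) at $s=0$ for the remainder in the paralinearization of $F$; and (iv) the para-product composition bounds of Lemma~\ref{l:para-prod} to handle $T_aT_b - T_{ab}$-type discrepancies and the logarithmic commutator Lemma~\ref{Log commutator estimate} to move $\log|D_x|$ past paracoefficients where needed. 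The two powers of $\||D_x|^{-\delta}\varphi_x\|_{C^{1,2\delta}}$ in the final bound should arise because the error terms are genuinely quadratic in the "difference" part of the first factor — i.e., each comes with two factors of something like $F'(\dq^y\varphi)-F'(\varphi_x)$ or its relatives, each of which carries one $|y|^{-\delta}$-type gain converted into a $\||D_x|^{-\delta}\varphi_x\|$ factor — while the $\|\varphi_x\|_{C^{1,\delta}}$-dependent constant absorbs the smooth nonlinear dependence on $F$ through its derivatives.

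The main obstacle I anticipate is the careful bookkeeping in the symbol computation: correctly interpreting all the $y$-integrals as $\PVy$-regularized pairings (since individually $\int F'(\dq^y\varphi)\frac{e^{iy\eta}-1}{|y|}\,dy$ is only conditionally convergent), matching the resulting $\log|\eta|$ and constant terms exactly with $B^0(\varphi)$ and $-2F(\varphi_x)\log|\eta|$, and ensuring that the integration-by-parts step connecting $F'(\dq^y\varphi)\cdot(\text{stuff})$ to $\langle\PVy, F(\dq^y\varphi)\rangle$ is justified with the boundary/low-frequency terms tracked into $\mathcal{R}$. A secondary technical point is verifying that after paralinearizing the first factor, the quantization error (the difference between $T_{F'(\dq^y\varphi)}\dq^y v$-type terms assembled over $y$ and the clean paradifferential operator $T_{\text{symbol}}v$) is controlled; this is where the $M$-dependent cutoff $\tilde\chi$ and the frequency-support relation $|\xi|\approx|\eta|$ must be used, together with Lemma~\ref{l:para-prod}, to see that the mismatch is lower order and hence perturbative.
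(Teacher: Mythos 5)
Your overall framework is sound---decompose the product into paraproduct pieces, regularize the $y$-integral as a $\PVy$ pairing, and extract the logarithm from $\langle\PVy,e^{-i\eta y}\rangle = \langle\PVy,e^{-iy}\rangle - 2\log|\eta|$---but there are three genuine technical gaps in the symbol computation that would prevent the plan from closing.

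First, the symbol is wrong. The paper's low-high piece is $A_3 = \int T_{F(\dq^y\varphi)}\sdq^y v\,dy$, whose paradifferential symbol is $-\int F(\dq^y\varphi)\frac{1-e^{i\eta y}}{|y|}\,dy$, with $F$, not $F'$, as the low-frequency coefficient. Applying Moser to the first factor to write $F(\dq^y\varphi)\approx T_{F'(\dq^y\varphi)}\dq^y\varphi$ and then extracting the low-high paraproduct onto $v$ returns you to (roughly) $T_{F(\dq^y\varphi)}\sdq^y v$; it does not replace $F$ by $F'$. Your subsequent claim that ``the $F'(\varphi_x)$ piece paired against $\sdq^y v$ gives $-2F(\varphi_x)\log|D_x|$'' does not follow---pairing $F'(\varphi_x)$ would yield $-2F'(\varphi_x)\log|D_x|$, and nothing in the $y$-integral integrates $F'$ up to $F$. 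You appear to be conflating the Moser paralinearization of $F$ (which introduces $F'$) with the Bony decomposition of the product $F(\dq^y\varphi)\cdot\sdq^y v$ (which keeps $F$). The paper sidesteps this altogether by applying Bony's decomposition to the product directly, with no Moser step: $A_1, A_2$ go straight into $\mathcal R$ and $A_3$ carries the symbol.

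Second, the split you propose for the symbol does not produce individually well-defined $\PVy$ pairings. Separating $F(\dq^y\varphi) = F(\varphi_x) + [F(\dq^y\varphi)-F(\varphi_x)]$ (or the $F'$ analogue) inside the pairing $\langle\PVy, F(\dq^y\varphi)(1-e^{-i\eta y})\rangle$ forces you to make sense of $F(\varphi_x)\langle\PVy, 1-e^{-i\eta y}\rangle$, which diverges logarithmically at $|y|\to\infty$ since the constant $1$ has no cancellation. The paper's rearrangement is essential here: it writes $F(\dq^y\varphi)(1-e^{-i\eta y}) = [F(\dq^y\varphi) - F(\varphi_x)e^{-i\eta y}] + (F(\varphi_x)-F(\dq^y\varphi))e^{-i\eta y}$, so that the decaying factor $F(\dq^y\varphi)\sim |y|^{-2}$ is kept un-oscillated and the oscillatory factor $e^{-i\eta y}$ is attached to $F(\varphi_x)$---both pieces have convergent $\PVy$ pairings, and the first yields $B^0(\varphi)+2F(\varphi_x)\log|\eta|$ directly, with no integration by parts. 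Finally, your claim that an integration by parts ``relating $\partial_x$ of $F'(\dq^y\varphi)\cdot\dq^y\varphi$-type quantities to $F(\dq^y\varphi)$'' produces $\langle\PVy,F(\dq^y\varphi)\rangle$ is not a step I can make sense of; the integration by parts in $y$ the paper uses appears only afterward, to convert the leftover $\langle\PVy,(F(\varphi_x)-F(\dq^y\varphi))e^{-i\eta y}\rangle$ into the kernel $G(x,y)$, which is then estimated by splitting $|y|\le 1$ and $|y|>1$. Your identification of the error-estimation toolbox (Lemmas~\ref{l:yavg}, \ref{Trilinear integral estimate}, \ref{l:para-prod}, Theorem~\ref{t:moser}) and the anticipated obstacle about $\PVy$ bookkeeping are both on target, but the symbol computation as described would not land on $B^0(\varphi)$ and $-2F(\varphi_x)\log|\eta|$.
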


\begin{proof}

We decompose
\begin{align*}
    A_{\varphi} v &= \int T_{\sdq^yv}F(\dq^y \varphi)\,dy +\int \Pi(F(\dq^y\varphi),\sdq^yv)\,dy + \int T_{F(\dq^y\varphi)}\sdq^yv \,dy \\
    &:=A_1 + A_2 + A_3.
\end{align*}

We estimate $A_1$ using Lemma~\ref{l:yavg},
\begin{equation*}\begin{aligned}
\|\D_x A_1\|_{L^2} &\lesssim \sup_y \left( \|y^\delta \D_x T_{v(\cdot + y)- v(\cdot)}F(\dq^y \varphi)\|_{L^2} + \|y^{-\delta} \D_x T_{v(\cdot + y)- v(\cdot)}F(\dq^y \varphi)\|_{L^2}\right).
\end{aligned}\end{equation*}
For the first term, we have
\begin{equation*}
\begin{aligned}
\|y^\delta \D_x F(\dq^y\varphi)\|_{L^\infty} &\lesssim \|y^\delta \D_x \dq^y\varphi \|_{L^\infty} \|F'(\dq^y\varphi)\|_{L^\infty} \lesssim \|\D_x \varphi\|_{C^{0,1-\delta}} \|\D_x \varphi\|_{L^\infty}.
\end{aligned}
\end{equation*}
For the second term, we have 
\[
\| \D_x T_{y^{-\delta}(v(\cdot + y)- v(\cdot))}F(\dq^y \varphi)\|_{L^2} \lesssim \||D|^{-\delta} v\|_{\dot H^\delta} \||D|^{1 + \delta} F(\dq^y \varphi)\|_{BMO} \lesssim \|v\|_{L^2}\|\D_x \varphi\|_{C^{1,\delta}}^2
\]
so that the contribution from $A_1$ may be absorbed into $\mathcal R(\varphi, v)$. A similar analysis applies to absorb $A_2$ into $\mathcal R(\varphi, v)$. 

\

We proceed with $A_3$, which we may write as
\begin{align*}
   A_3 = -\int &T_{F(\dq^y\varphi)\frac{1-e^{i\eta y}}{|y|}}v \, dy=-T_{\int F(\dq^y\varphi)\frac{1-e^{i\eta y}}{| y|} \, dy}v.
\end{align*}
Since $F(\dq^y\varphi)(1-e^{i\eta y})$ vanishes at $y = 0$ and $F(\dq^y\varphi)$ satisfies the decay 
\[
|F(\dq^y\varphi)| \lesssim_{\|\varphi\|_{L^\infty}} |y|^{-2} 
\]
in $y$, we may express the symbol of the paradifferential operator in terms of $\PVy$:
\[
 \int F(\dq^y\varphi)\frac{1-e^{i\eta y}}{| y|}\,dy = \left\langle \PVy, F(\dq^y\varphi)(1-e^{-i\eta y})\right\rangle.
\]

The inverse Fourier transform of $\PVy$ takes the form
\begin{equation*}\begin{aligned}
\left\langle \PVy, e^{-i\eta y}\right\rangle &= \lim_{\eps\rightarrow 0} \left(\int_{|y|>\eps} \frac{e^{i\eta y}}{|y|}\,dy + 2\log(\eps)\right) \\
&= \lim_{\eps\rightarrow 0} \left(\int_{|z|>|\eta|\eps} \frac{e^{iz}}{|z|}\,dz + 2\log(|\eta|\eps)\right) - 2\log |\eta| \\
&= \left\langle \PVy, e^{-iy}\right\rangle - 2\log |\eta|.
\end{aligned}\end{equation*}
Using this, we write 
\begin{equation}\label{A-contribution} \begin{aligned}
 \left\langle \PVy, F(\dq^y\varphi)(1-e^{-i\eta y})\right\rangle &= \left\langle \PVy, F(\dq^y\varphi) - F(\varphi_x) e^{-i\eta y}\right\rangle \\
&\quad + \left\langle \PVy, (F(\varphi_x) - F(\dq^y\varphi)) e^{-i\eta y}\right\rangle \\
 &= B^0(\varphi) + 2F(\varphi_x) \log |\eta| \\
&\quad + \left\langle \PVy, (F(\varphi_x) - F(\dq^y\varphi)) e^{-i\eta y}\right\rangle.
\end{aligned}\end{equation}

\

Since the first two terms on the right hand side of \eqref{A-contribution} form the main terms of the paralinearization of $A_\varphi$, it remains to absorb the contribution of the last term in \eqref{A-contribution} into $\mathcal{R}$. From Lemma 6.2 in \cite{32}, we have 
\begin{equation}\label{a3analysis}
 \D_x T_{\left\langle \PVy, (F(\varphi_x) - F(\dq^y\varphi)) e^{-i\eta y}\right\rangle} v = T'_{\int \frac{F(\varphi_x) - F(\dq^y \varphi)}{|y|}i\eta e^{i\eta y}\,dy}v, 
\end{equation}
where $T'$ is another low-high paraproduct.

 Integrating the paradifferential symbol by parts,
\begin{equation}\label{A-contribution-2} \begin{aligned}
     \int &\frac{F(\varphi_x)-F(\dq^y\varphi)}{|y|}e^{i\eta y}i\eta\,dy \\
     &= \int_0^{\infty}\frac{F(\varphi_x)-F(\dq^y\varphi)}{y}\D_y(e^{i\eta y})\,dy - \int_{-\infty}^0\frac{F(\varphi_x)-F(\dq^y\varphi)}{y}\D_y(e^{i\eta y})\,dy\\
     &=  F'(\varphi_x)\varphi_{xx} - \int \sgn(y) \cdot \D_y \left(\frac{F(\varphi_x)-F(\dq^y\varphi)}{y}\right)e^{i\eta y}\,dy\\
     &=: F'(\varphi_x)\varphi_{xx} - \int G(x,y)e^{i\eta y}\,dy.
\end{aligned}\end{equation}
It is immediate to see that the contribution from the first term on the right in \eqref{A-contribution-2} may be absorbed into $\mathcal{R}$. It remains to consider the contribution from the second term, which we write
\begin{equation*}\begin{aligned}
-T'_{\int G(x,y)e^{i\eta y}\,dy} v = -\int T'_{G(x, y)} v( \cdot + y) \, dy
\end{aligned}\end{equation*}
so that
\begin{equation*}\begin{aligned}
\left\|\int T'_{G(x, y)} v( \cdot + y) \, dy \right\|_{L^2} &\lesssim \int \|G(\cdot, y)\|_{L^\infty} \, dy \cdot \|v\|_{L^2} \\
&\lesssim \left(\int_{|y| \leq 1} \|G(\cdot, y)\|_{L^\infty} \, dy + \int_{|y| > 1} \|G(\cdot, y)\|_{L^\infty} \, dy \right) \|v\|_{L^2}.
\end{aligned}\end{equation*}
Since
\begin{equation*}\begin{aligned}
G(x, y) &= \sgn (y) \frac{- y\cdot \D_y F(\dq^y\varphi) + F(\dq^y\varphi) - F(\varphi_x)}{y^2}
\end{aligned}\end{equation*}
and
\[
|\D_y F(\dq^y\varphi)| \lesssim\|\varphi_x\|^2_{L_x^\infty}|y|^{-1},
\]
we have
\[
\|G(\cdot, y)\|_{L^\infty} \lesssim \|\varphi_x\|^2_{L_x^\infty} \frac{1}{y^2}
\]
so that the integral over $\{|y| > 1\}$ converges with the appropriate bound. On the other hand, for the integral over $\{|y| \leq 1\}$, we use instead
\begin{equation*}\begin{aligned}
|y|^{1 - \delta} \|G(\cdot, y)\|_{L^\infty} &\lesssim\left\|\frac{1}{|y|^\delta} \left( \D_y F(\dq^y\varphi) - \frac{F(\dq^y\varphi) - F(\varphi_x)}{y}\right)\right\|_{L_x^\infty} \\
&\lesssim \|F(\dq^y\varphi)\|_{L_x^\infty C_y^{1,\delta}} \lesssim \|\D_x \varphi\|_{C^{1,\delta}_x}^2.
\end{aligned}\end{equation*}
which also suffices.

\end{proof}

Next, we paralinearize the nonlinear term $A_\varphi \varphi$ in \eqref{SQG}, evaluating the $H^s$ higher regularity of the errors: 

\begin{proposition}\label{paralinearization}
We have the paralinearization
\[
A_\varphi \varphi_x = \D_x T_{B^0(\varphi)}\varphi - 2 \D_x T_{F(\varphi_x)} \log|D_x| \varphi + \mathcal{R}(\varphi),
\]
where for any $s \geq 0$,
\begin{align*}
    \|\mathcal{R}(\varphi)\|_{H^s}\lesssim_{\|\varphi_x\|_{C^{1,\delta}}}\||D_x|^{-\delta}\varphi_x\|^2_{C^{1,2\delta}}\|\varphi\|_{H^s}.
\end{align*}

Moreover, if $\varphi$ and $\psi$ are two solutions and $v=\varphi-\psi$,
\begin{align*}
    \|\mathcal{R}(\varphi)-\mathcal{R}(\psi)\|_{H_x^s}&\lesssim K(\|(\varphi,\psi)\|_{W^{2 + s + \delta, \infty}})\|v\|_{L_x^2} + K(\|(\varphi,\psi)\|_{W^{2 + \delta, \infty}})\|v\|_{H_x^s} ,
\end{align*}
where $K:[0,\infty)\rightarrow[0,\infty)$ is a nondecreasing function.
\end{proposition}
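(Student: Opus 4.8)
The plan is to prove Proposition~\ref{paralinearization} by setting $v = \varphi$ in Proposition~\ref{paralinearization-lin}, but keeping careful track of Sobolev regularity of the error terms rather than merely their $L^2$ size. First I would revisit the decomposition $A_\varphi \varphi_x = A_1 + A_2 + A_3$ from the proof of Proposition~\ref{paralinearization-lin}, now with $v$ replaced by $\varphi_x$. The terms $A_1$ and $A_2$ (the high–low and balanced paraproduct pieces) are handled by Lemma~\ref{l:yavg} to reduce to the pointwise-in-$y$ estimates, followed by the paraproduct bounds of Lemma~\ref{l:para-prod}; the point is that in these terms all the $H^s$ regularity can be placed on the $\varphi$ factor coming from $\sdq^y v = \sdq^y \varphi_x$, while the difference-quotient coefficient $F(\dq^y \varphi)$ only needs to be controlled in $C^{1,\delta}$-type norms via $\||D_x|^{-\delta}\varphi_x\|_{C^{1,2\delta}}^2$. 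For $A_3$, I would follow exactly the symbol computation leading to \eqref{A-contribution}: the first two terms there produce the claimed main terms $\D_x T_{B^0(\varphi)}\varphi - 2\D_x T_{F(\varphi_x)}\log|D_x|\varphi$ (after commuting $\D_x$ past the paraproduct, which is harmless), and the last term in \eqref{A-contribution} is again processed via \eqref{a3analysis}–\eqref{A-contribution-2}. The only change from the proof of Proposition~\ref{paralinearization-lin} is that in the final step we estimate $\|\int T'_{G(x,y)} \varphi_x(\cdot + y)\,dy\|_{H^s}$ rather than its $L^2$ norm, which costs nothing since $T'$ is a paraproduct and we may move $|D_x|^s$ onto the $\varphi_x(\cdot+y)$ argument, leaving $\int \|G(\cdot,y)\|_{L^\infty}\,dy \lesssim_{\|\varphi_x\|_{C^{1,\delta}}} \||D_x|^{-\delta}\varphi_x\|_{C^{1,2\delta}}^2$ as before.

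The Moser-type bound \eqref{moser} from Theorem~\ref{t:moser}(a) is the tool that lets us pass from the $BMO$/$C^{1,\delta}$ estimates on $F(\dq^y\varphi)$-type quantities to ones involving $\varphi$ itself; combined with Bernstein/Sobolev embedding one converts the $C^{1,2\delta}$ and $C^{1,\delta}$ norms appearing above into $\|\varphi_x\|_{C^{1,\delta}}$-dependent constants times $\|\varphi\|_{H^s}$, giving the stated first estimate for all $s \ge 0$.

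For the difference estimate, I would write $\mathcal{R}(\varphi) - \mathcal{R}(\psi)$ as a sum of terms each of which is linear in one factor of $v = \varphi - \psi$, with the remaining factors involving $\varphi$ or $\psi$. This is the standard "telescoping" device: in each multilinear expression $\prod(\text{difference quotients of } \varphi, \psi, F'\text{-compositions})$, replace one $\varphi$ by $v$ at a time and bound the coefficient factors using mean value theorem estimates on $F, F', F''$ (so that differences $F(\dq^y\varphi) - F(\dq^y\psi)$ produce a factor $\dq^y v$ times a bounded smooth coefficient). The resulting terms fall into two groups: those where $v$ carries the top $H^s$ derivative — bounded by $K(\|(\varphi,\psi)\|_{W^{2+\delta,\infty}})\|v\|_{H^s}$ since then the other factors only need $W^{2+\delta,\infty}$ control — and those where $v$ sits in a low-regularity slot while some factor of $\varphi$ or $\psi$ carries $s$ derivatives, which forces placing $s$ derivatives in $L^\infty$ via Sobolev embedding and hence the $W^{2+s+\delta,\infty}$-dependent constant on the $\|v\|_{L^2}$ term. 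I would organize this using Lemma~\ref{Trilinear integral estimate} (and Lemma~\ref{l:yavg}) to handle the $y$-integrals uniformly in each case.

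The main obstacle I anticipate is bookkeeping rather than any single hard estimate: one must check that when $\D_x$ is commuted through $T_{B^0(\varphi)}$ and through $T_{F(\varphi_x)}\log|D_x|$ — which is what converts $A_\varphi \varphi_x$ into the stated form with $\D_x$ on the outside — the commutator errors (controlled by Lemma~\ref{Log commutator estimate} for the logarithmic piece, and by Lemma~\ref{l:para-prod} with $\gamma_2 = 1$ for the $B^0$ piece) are genuinely of the perturbative type $\||D_x|^{-\delta}\varphi_x\|_{C^{1,2\delta}}^2\|\varphi\|_{H^s}$ and not merely $O(\|\varphi\|_{C^{1,\delta}})\|\varphi\|_{H^s}$. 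Ensuring the quadratic smallness in the low-frequency-$\varphi_x$ norm survives these manipulations — in particular that $B^0(\varphi)$ itself obeys $\||D_x|^\gamma B^0(\varphi)\|_{BMO} \lesssim \||D_x|^{-\delta}\varphi_x\|_{C^{1,2\delta}}^2$ for the relevant $\gamma$, which follows from the definition of $B^0$ and the second-order vanishing $F(0) = F'(0) = 0$ — is the delicate point, but it is already implicit in the proof of Proposition~\ref{paralinearization-lin} and needs only to be propagated with the extra $H^s$ weight.
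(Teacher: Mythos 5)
Your proposal correctly identifies the overall structure (decompose $A_\varphi\varphi_x$ into paraproduct pieces, use Lemma~\ref{l:yavg} and Lemma~\ref{l:para-prod}, handle the extraction of $\D_x$), and the high-level telescoping plan for the difference estimate matches what the paper does. However, there is a genuine gap in the treatment of the low-high paraproduct term $A_1 = \int T_{\sdq^y\varphi_x}F(\dq^y\varphi)\,dy$.

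You assert that in this term ``all the $H^s$ regularity can be placed on the $\varphi$ factor coming from $\sdq^y v = \sdq^y\varphi_x$.'' This cannot work: in the paraproduct $T_{\sdq^y\varphi_x}F(\dq^y\varphi)$, the factor $\sdq^y\varphi_x$ carries the \emph{low} frequency and $F(\dq^y\varphi)$ carries the \emph{high} frequency, so the output frequency is governed by $F(\dq^y\varphi)$. Any $H^s$ bound therefore must place $s$ derivatives on $F(\dq^y\varphi)$, and Moser's estimate~\eqref{moser} (Theorem~\ref{t:moser}(a), which you invoke) only gives $\|F(\dq^y\varphi)\|_{H^s}\lesssim\|\dq^y\varphi\|_{H^s}\approx\|\varphi_x\|_{H^s}$ for $|y|\lesssim 1$ --- this \emph{loses} a derivative and yields $\|\varphi\|_{H^{s+1}}$, not the claimed $\|\varphi\|_{H^s}$. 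The paper resolves this by rearranging \emph{before} analyzing: it extracts $\D_x$ from $A_3$ (writing $T_{F(\dq^y\varphi)}\sdq^y\varphi_x = \D_x T_{F(\dq^y\varphi)}\sdq^y\varphi - T_{\D_x F(\dq^y\varphi)}\sdq^y\varphi$) and folds the commutator $-\int T_{\D_x F(\dq^y\varphi)}\sdq^y\varphi\,dy$ into $A_1$. The combined $A_1$ is then split further as $\int T_{\sdq^y\varphi_x}\bigl(F(\dq^y\varphi)-T_{F'(\dq^y\varphi)}\dq^y\varphi\bigr)\,dy$ plus paraproduct-comparison terms, and the crucial tool is Theorem~\ref{t:moser}(b) (not (a)), which gains a derivative via the antiderivative on the right-hand side: $\|F(f)-T_{F'(f)}f\|_{\dot H^s}\lesssim\|f\|_{W^{1,\infty}}\|\D_x^{-1}f\|_{\dot H^s}$, applied with $f=\dq^y\varphi$ so that $\D_x^{-1}\dq^y\varphi$ scales like $\varphi$ itself. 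Your sketch never invokes Theorem~\ref{t:moser}(b) and never performs this pairing, so the claimed $H^s$ bound on $\mathcal R(\varphi)$ is not reachable by the route you describe.

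A related issue is your plan of running Proposition~\ref{paralinearization-lin} with $v=\varphi_x$ and then commuting $\D_x$ to the outside. That proposition controls only $\|\D_x\mathcal R(\varphi,v)\|_{L^2}\lesssim\|v\|_{L^2}$; with $v=\varphi_x$ this scales like $\|\varphi\|_{\dot H^1}$, which is the wrong quantity at the low end: Proposition~\ref{paralinearization} requires $\|\mathcal R(\varphi)\|_{L^2}\lesssim\|\varphi\|_{L^2}$ at $s=0$. The paper avoids this altogether by writing $A_3 = \int T_{F(\dq^y\varphi)}\sdq^y\varphi\,dy$ (with $\varphi$, not $\varphi_x$, inside) and carrying $\D_x$ as an external prefactor from the start; the $A_3$ analysis is then the same as in the linearized case ``with $\varphi$ in place of $v$ and $H^s$ in place of $L^2$,'' and no after-the-fact commutation of $\D_x$ through $T_{B^0(\varphi)}$ or $T_{F(\varphi_x)}\log|D_x|$ is needed.
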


We remark that the $\mathcal R$ difference estimate is not optimal with respect to the pointwise control coefficients. However, this estimate is only used in the construction of smooth solutions, and will play no role in the refined low regularity analysis.

\begin{proof}
We write
\begin{align*}
    A_{\varphi} \varphi_x &= \int T_{\sdq^y\varphi_x}F(\dq^y \varphi)\,dy +\int \Pi(F(\dq^y\varphi),\sdq^y\varphi_x)\,dy + \int T_{F(\dq^y\varphi)}\sdq^y\varphi_x \,dy \\
    &= \int T_{\sdq^y\varphi_x} F(\dq^y \varphi) - T_{\D_x (F(\dq^y \varphi))}\dq^y \varphi \,dy +\int \Pi(F(\dq^y\varphi),\sdq^y\varphi_x)\,dy \\
    &\quad + \D_x \int T_{F(\dq^y\varphi)}\sdq^y \varphi \,dy  \\
    &:=A_1 + A_2 + \D_x A_3.
\end{align*}
The analysis of $A_3$ closely follows the analysis of the counterpart $A_3$ in the proof of Proposition~\ref{paralinearization-lin}, with $\varphi$ in the place of $v$ and $H^s$ in the place of $L^2$. This contributes both terms in the paralinearization. The balanced frequency term $A_2$ is also similar to its counterpart and may be absorbed into $\mathcal R(\varphi)$. 

\

It remains to absorb $A_1$ into $\mathcal R(\varphi)$. We further decompose
\[
A_1 = \int T_{\sdq^y\varphi_x} (F(\dq^y \varphi) - T_{F'(\dq^y \varphi)}\dq^y \varphi)\,dy + \int T_{\dq^y\varphi_x} T_{F'(\dq^y \varphi)}\sdq^y \varphi - T_{\D_x (F(\dq^y \varphi))}\sdq^y \varphi \,dy .
\]
The first difference is estimated using the Moser estimates of Theorem~\ref{t:moser}, while the second may be estimated using the para-product estimates of Lemma~\ref{l:para-prod}.

\

We now prove the difference estimate.
We rewrite
\begin{align*}
    A_{\varphi} \varphi_x &= \int T_{\sdq^y\varphi_x}F(\dq^y \varphi)\,dy +\int \Pi(F(\dq^y\varphi),\sdq^y\varphi_x)\,dy + \int T_{F(\dq^y\varphi)}\sdq^y\varphi_x \,dy \\
    &:=B_1 + B_2 + B_3.
\end{align*}
We consider the components of the remainders arising from the terms of the form $B_3$. One such contribution is 
\begin{align*}
\int &T_{\frac{1}{|y|}(F(\dq^y\varphi)-F(\varphi_x))e^{i\eta y}}\varphi_x\,dy-\int T_{\frac{1}{|y|}(F(\dq^y\psi)-F(\psi_x))e^{i\eta y}}\psi_x\,dy\\
&=\int T_{\frac{1}{|y|}(F(\dq^y\varphi)-F(\varphi_x))e^{i\eta y}}v_x\,dy+\int T_{\frac{1}{|y|}((F(\dq^y\varphi)-F(\varphi_x))-(F(\dq^y\psi)-F(\psi_x)))e^{i\eta y}}\psi_x\,dy.
\end{align*}
We analyze the last term, by splitting the integral over the regions $\{|y|\leq 1\}$ and $\{|y|>1\}$, respectively. We first consider the former. We claim that
\begin{align*}
    &\left\|\int_{\vert y\vert\leq 1} \partial_xT_{(F(\dq^y\varphi)-F(\varphi_x))-(F(\dq^y\psi)-F(\psi_x))}\sdq^y\psi\,dy \right\|_{L_x^2}\lesssim \|v\|_{L_x^2}\|(\psi_x,\varphi_x)\|_{W_x^{1,\infty}}\|\psi_x\|_{W_x^{2,\infty}},
\end{align*}
where $v=\varphi-\psi$.

For this purpose, we write the low frequency component as
\begin{align*}
    D:=&\frac{1}{|y|}(F(\dq^y\varphi)-F(\varphi_x))-(F(\dq^y\psi)-F(\psi_x))\\
    &=\frac{1}{|y|}\int_0^1\varphi_x(x+\mu y)-\varphi_x(x)\, d\mu \cdot\int_0^1F'\left(\lambda\int_0^1\varphi_x(x+\mu y)\, d\mu+(1-\lambda)\varphi_x(x)\right)\, d\lambda\\
    &-\frac{1}{|y|}\int_0^1\psi_x(x+\mu y)-\psi_x(x)\, d\mu \cdot\int_0^1F'\left(\lambda\int_0^1\psi_x(x+\mu y)\, d\mu+(1-\lambda)\psi_x(x)\right)\, d\lambda.
\end{align*}
Let $\displaystyle \int_0^1 f(x+\mu y)-f(x)\, d\mu:=k_f(x,y)$, $\displaystyle p_f(x,y)=\sgn(y)\int_0^1\int_0^1 \mu f(x+\tau\mu y)-f(x)\, d\mu \, d\tau$ $\displaystyle l_f(\lambda,x,y)=\lambda k_f(x,y)+f(x)$. In particular, $\displaystyle\frac{k_f(x,y)}{|y|}=p_{f_x}(x,y)$. 
$D$ becomes
\begin{align*}
D&=\frac{1}{|y|}k_{v_x}\int_0^1F'\left(l_{\varphi_x}(\lambda,x,y)\right)\, d\lambda\\
&\quad \, +\frac{1}{|y|}k_{\psi_x} \int_0^1 l_{v_x}(\lambda,x,y)\int_0^1F''\left(\nu l_{\varphi_x}(\lambda,x,y)+(1-\nu)l_{\psi_x}(\lambda,x,y)\right)\, d\nu \, d\lambda,
\end{align*}
which can in turn be written as
\begin{align*}
D&=\partial^2_x\left(p_{v}\int_0^1F'\left(l_{\varphi_x}(\lambda,x,y)\right)\, d\lambda\right)-\partial_x\left(p_{v}\partial_x\int_0^1F'\left(l_{\varphi_x}(\lambda,x,y)\right)\, d\lambda\right)\\
    & \quad -\partial_x\left(p_{v}\partial_x\int_0^1F'\left(l_{\varphi_x}(\lambda,x,y)\right)\, d\lambda\right)+p_{v}\partial^2_x\int_0^1F'\left(l_{\varphi_x}(\lambda,x,y)\right)\, d\lambda\\
    & \quad +\partial_x\left(p_{\psi_{xx}} \int_0^1 l_{v}(\lambda,x,y)\int_0^1F''\left(\nu l_{\varphi_x}(\lambda,x,y)+(1-\nu)l_{\psi_x}(\lambda,x,y)\right)\, d\nu \, d\lambda\right)\\
    &\quad  - \int_0^1 l_{v}(\lambda,x,y)\partial_x\left(p_{\psi_{xx}}\int_0^1F''\left(\nu l_{\varphi_x}(\lambda,x,y)+(1-\nu)l_{\psi_x}(\lambda,x,y)\right)\right)\, d\nu \, d\lambda .
\end{align*}
We can now move the derivatives in the first three terms from the low frequencies to the high frequencies and obtain the claimed estimates.

For the $\{|y|>1\}$ region, we write
\begin{align*}
T_{\int_{|y|>1}\frac{1}{|y|}((F(\dq^y\varphi)-F(\varphi_x))-(F(\dq^y\psi)-F(\psi_x))e^{i\eta y}i\eta)}\psi
\end{align*}
integrate the low frequency component by parts (and use the notation of \eqref{A-contribution-2})

\begin{align*}
\int_{|y|>1}\frac{1}{|y|}&((F(\dq^y\varphi)-F(\varphi_x))-(F(\dq^y\psi)-F(\psi_x))\partial_ye^{i\eta y})\\
&=(2F(\varphi_x)-F(\dq^1\varphi)-F(\dq^{-1}\varphi))-(2F(\psi_x)-F(\dq^1\psi)-F(\dq^{-1}\psi))\\
&\quad -T_{\int_{|y|>1} (G_\varphi(x, y) - G_\psi(x, y) )e^{i\eta y}\,dy}\psi,
\end{align*}
where $G_\varphi$ is defined in the proof of Proposition \ref{paralinearization-lin}.

The estimate for this term now follows using ideas similar to the ones for the region $\{|y|\leq 1\}$ and from the proof of \ref{paralinearization-lin}.

From the contribution corresponding to $A_1$, we also have
\begin{align*}
    &\int_{\vert y\vert\leq 1}T_{\sdq^y\varphi_x}F(\dq^y\varphi)-T_{F'(\dq^y\varphi)\dq^y\varphi_x}\sdq^y\varphi-T_{\sdq^y\psi_x}F(\dq^y\psi)+T_{F'(\dq^y\psi)\dq^y\psi_x}\sdq^y\psi\,dy\\
    &=\int_{\vert y\vert\leq 1}T_{\sdq^y\varphi_x-\sdq^y\psi_x}F(\dq^y\varphi)\,dy-\int_{\vert y\vert\leq 1}T_{F'(\dq^y\varphi)\dq^y\varphi_x-F'(\dq^y\psi)\dq^y\psi_x}\sdq^y\varphi\,dy\\
    &\quad +\int_{\vert y\vert\leq 1}T_{\sdq^y\psi_x}F(\dq^y\varphi)-F(\dq^y\psi)\,dy
    -\int_{\vert y\vert\leq 1}T_{F'(\dq^y\psi)\dq^y\psi_x}\sdq^y(\varphi-\psi)\,dy.
\end{align*}
 As $F$ has a zero of order $2$ at $0$, Lemma \ref{Trilinear integral estimate} implies that
\begin{align*}
    \left\|\int_{\vert y\vert\leq 1}T_{\sdq^yv_x}F(\dq^y\varphi)\,dy\right\|_{L_x^2}&\lesssim\|v\|_{L_x^2}\|\varphi_x\|^2_{L_x^\infty}((\|\varphi_x\|_{L_x^\infty}+1) (\|\vert D_x\vert^{1+\delta}\varphi_x\|_{L_x^\infty}+\|\varphi_{xx}\|_{L_x^\infty})\\
   &\quad +\|\vert D_x\vert^{1+\delta}\varphi_x\|_{L_x^\infty}+\|\vert D_x\vert^{1-\delta}\varphi_x\|_{L_x^\infty}+\|\varphi_x\|_{L_x^\infty}).
\end{align*}
The other bounds follow by reasoning similarly as before.

The other terms in $\mathcal{R}(\varphi)-\mathcal{R}(\psi)$ can be treated similarly. The proof of the $H_x^s$-bound is analogous.

\end{proof}

Recall that the linearized equation \eqref{SQG-lin} corresponding to \eqref{SQG} may be written
\begin{equation*}
    \partial_tv - \D_x A_\varphi v =2\log|D_x| \D_xv.
\end{equation*}
Using the paralinearization Proposition~\ref{paralinearization-lin}, we obtain
\begin{proposition}\label{Linearized equation estimate}
The linearized equation \eqref{SQG-lin} admits the paralinearization
\begin{equation}\label{SQG-paralin}
    \partial_tv - \D_xT_{B^0(\varphi)}v +\mathcal{R}(\varphi,v) = 2\D_x T_{1 - F(\varphi_x)} \log|D_x|v,
\end{equation}
where
\begin{align*}
    \|\mathcal{R}(\varphi, v)\|_{L^2}\lesssim_{\|\varphi_x\|_{C^{1,\delta}}}\||D_x|^{-\delta}\varphi_x\|^2_{C^{1,2\delta}}\|v\|_{L^2}.
\end{align*}

\end{proposition}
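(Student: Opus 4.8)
The plan is to derive \eqref{SQG-paralin} by directly substituting the paralinearization of Proposition~\ref{paralinearization-lin} into the linearized equation \eqref{SQG-lin}. Starting from $\partial_t v - \D_x A_\varphi v = 2\log|D_x|\D_x v$, I would apply Proposition~\ref{paralinearization-lin} to write $A_\varphi v = T_{B^0(\varphi)}v - 2T_{F(\varphi_x)}\log|D_x|v + \mathcal{R}(\varphi,v)$, so that
\[
\partial_t v - \D_x T_{B^0(\varphi)}v + 2\D_x T_{F(\varphi_x)}\log|D_x|v - \D_x\mathcal{R}(\varphi,v) = 2\D_x\log|D_x|v.
\]
Moving $\D_x\mathcal{R}(\varphi,v)$ and recombining the logarithmic terms on the right as $2\D_x\log|D_x|v - 2\D_x T_{F(\varphi_x)}\log|D_x|v = 2\D_x T_{1-F(\varphi_x)}\log|D_x|v$ (using that $\log|D_x|$ is a Fourier multiplier, so $\D_x\log|D_x|v = \D_x T_1\log|D_x|v$ modulo the low-frequency truncation built into $T$, which is harmless here) gives exactly \eqref{SQG-paralin}, with the new remainder being $\mathcal{R}(\varphi,v)$ from Proposition~\ref{paralinearization-lin} possibly adjusted by $\D_x^{-1}$ applied to that same remainder — but since the statement measures $\mathcal{R}(\varphi,v)$ in $L^2$ while Proposition~\ref{paralinearization-lin} controls $\D_x\mathcal{R}$ in $L^2$, one needs to be slightly careful about which object is being named $\mathcal{R}$ in \eqref{SQG-paralin}.

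The one genuine technical point is the passage from the $\dot H^{-1}$-type bound $\|\D_x\mathcal{R}(\varphi,v)\|_{L^2} \lesssim \cdots \|v\|_{L^2}$ of Proposition~\ref{paralinearization-lin} to an $L^2$ bound on the remainder appearing in \eqref{SQG-paralin}. The resolution is that in \eqref{SQG-paralin} the remainder enters the equation \emph{without} an extra $\D_x$: indeed, the terms in \eqref{SQG-paralin} are $\partial_t v$, $\D_x T_{B^0(\varphi)}v$, $\D_x T_{1-F(\varphi_x)}\log|D_x|v$, each of which is one derivative stronger than the corresponding term in the original equation $\partial_t v - \D_x A_\varphi v = 2\D_x\log|D_x|v$ only in the sense of how $A_\varphi v$ compares to $v$. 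So the $\mathcal R(\varphi,v)$ in \eqref{SQG-paralin} should be read as $\D_x$ times the remainder of Proposition~\ref{paralinearization-lin}, and the $L^2$ estimate stated is then literally the estimate proved there. I would simply note this identification explicitly: the $\mathcal R(\varphi,v)$ in \eqref{SQG-paralin} equals $\D_x \mathcal R(\varphi,v)$ from Proposition~\ref{paralinearization-lin}, whose $L^2$ norm is bounded as claimed.

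The remaining step is to justify combining the two logarithmic terms. Here I would observe that $2\D_x\log|D_x|v$ and $2\D_x T_{F(\varphi_x)}\log|D_x|v$ combine because the paradifferential quantization $T$ is linear in its symbol, so $\D_x T_{1-F(\varphi_x)}\log|D_x|v = \D_x T_1\log|D_x|v - \D_x T_{F(\varphi_x)}\log|D_x|v$; and $T_1\log|D_x|v$ differs from $\log|D_x|v$ only by a smoothing low-frequency term (coming from the $\hat P_{>M}$ factors and the cutoff $\tilde\chi$ in the definition of $T$), which is bounded on $L^2$ with constant depending on $M$ and is harmless, so it may be absorbed into $\mathcal R(\varphi,v)$ after applying $\D_x$ — or, more cleanly, one simply defines the logarithmic operator on the right-hand side of the original equation consistently with the paradifferential convention from the start, which is the convention already implicit in \eqref{SQG-paralin}. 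I do not expect any real obstacle here: the proposition is essentially a bookkeeping corollary of Proposition~\ref{paralinearization-lin}, and the only thing to be careful about is the $\D_x$ normalization of the remainder, which I would state at the outset of the (short) proof.
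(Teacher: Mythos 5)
Your approach matches the paper's: Proposition~\ref{Linearized equation estimate} is presented there as an immediate consequence of Proposition~\ref{paralinearization-lin} with no separate argument given, and your substitution-and-regroup proof, together with the correct observation that the remainder in \eqref{SQG-paralin} must be read as $\D_x$ applied to the remainder from Proposition~\ref{paralinearization-lin}, is exactly the intended reading. That identification is the only nontrivial bookkeeping point, and you state it clearly.

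One small inaccuracy in your handling of the low-frequency discrepancy: you propose to absorb the term $2\D_x\log|D_x|(I - T_1)v$ into $\mathcal R(\varphi,v)$. This would not be consistent with the stated estimate, because that term is $O_M(\|v\|_{L^2})$ with no quadratic dependence on $\varphi$, whereas the proposition claims a bound proportional to $\||D_x|^{-\delta}\varphi_x\|_{C^{1,2\delta}}^2$. The cleaner resolution — which is what the subsequent energy argument in Proposition~\ref{p:linear-exist} actually relies on — is to keep this low-frequency piece on the right-hand side with $\log|D_x|\D_x$: since $\D_x\log|D_x|$ is skew-adjoint, the contribution of $\D_x\log|D_x|(I-T_1)v$ to $\frac{d}{dt}E^{(s)}$ vanishes, so it need not be treated as a source term at all. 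Equivalently, the right-hand side of \eqref{SQG-paralin} should be interpreted as $2\D_x\log|D_x|v - 2\D_xT_{F(\varphi_x)}\log|D_x|v$, which differs from $2\D_xT_{1-F(\varphi_x)}\log|D_x|v$ by precisely this skew-adjoint low-frequency operator and is the form the energy estimate uses. With that adjustment the quadratic bound on $\mathcal R$ holds literally, and the rest of your argument is correct.
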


\section{Energy estimates}\label{s:energy}

The goal of this section is to prove energy estimates for the paradifferential flow \eqref{Linearized equation estimate}, which will in turn be used to derive energy estimates for solutions of \eqref{SQG}, along with its linearization \eqref{SQG-lin}.

We define the modified energies
\[
E^{(s)}(v) = \int g\cdot T_{1 - F(\varphi_x)} g \,dx, \qquad g = T_{(1 - F(\varphi_x))^s} | D_x|^s v,
\]
and
\begin{align*}
E^{s}(v) =E^{(s)}(v)+E^{(0)}(v).
\end{align*}
Due to our construction of the paraproduct, these modified energies are comparable to the classical energies in the Sobolev spaces $H^s(\mathbb{R})$. 
 
We prove the following:
\begin{proposition}\label{p:linear-exist}
Let $v_0\in H^s$ and $\mathcal R \in C([0,T],H_x^s)$, $\varphi \in C([0, T], W_x^{2,\infty})$. Then the initial value problem
\begin{equation}\begin{aligned}\label{lin-eqn}
    \D_tv - \D_xT_{B^0(\varphi)}v +\mathcal{R} &= 2\D_x T_{1 - F(\varphi_x)} \log|D_x|v,\\
v(0,x)&=v_0(x)
\end{aligned}\end{equation}
has a unique solution $v \in C([0,T];H_x^s)$, satisfying the estimate
\begin{equation}\begin{aligned}\label{lin-eqn-energy}
  \frac{d}{dt}E^{s}(t)&\lesssim (\|\varphi_x\|_{C^{1, \delta}}+\|\varphi_{tx}\|_{L^\infty})\|\varphi_x\|_{C^{1, \delta}} E^{s}(t) + \|\mathcal{R}\|_{H^s}\|v\|_{H^s}.
\end{aligned}\end{equation}
\end{proposition}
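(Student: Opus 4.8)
The plan is to proceed in two stages: first establish the energy inequality \eqref{lin-eqn-energy} as an a priori estimate for sufficiently regular solutions, and then use it to construct the solution by the standard regularization/compactness argument. For the a priori estimate, I would differentiate $E^s(t)$ in time and track the contributions. Writing $g = T_{(1-F(\varphi_x))^s}|D_x|^s v$, the key is that $g$ itself satisfies a paradifferential equation of the same structure as \eqref{lin-eqn}: conjugating \eqref{lin-eqn} by $T_{(1-F(\varphi_x))^s}|D_x|^s$, and using the para-product composition estimates of Lemma~\ref{l:para-prod} together with the logarithmic commutator estimate Lemma~\ref{Log commutator estimate}, one gets
\[
\D_t g - \D_x T_{B^0(\varphi)} g = 2\D_x T_{1-F(\varphi_x)}\log|D_x| g + \tilde{\mathcal R},
\]
where $\tilde{\mathcal R}$ collects the commutator errors (which are controlled by $\|\varphi_x\|_{C^{1,\delta}}\|g\|_{L^2} + \|g\|_{L^2}\cdot(\text{lower order})$, after absorbing the $H^s$-to-$L^2$ gain from the para-product bounds) together with $T_{(1-F(\varphi_x))^s}|D_x|^s \mathcal R$, which contributes the $\|\mathcal R\|_{H^s}\|v\|_{H^s}$ term. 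There is also the contribution from $\D_t$ hitting the coefficient $(1-F(\varphi_x))^s$, which produces a factor $\|\varphi_{tx}\|_{L^\infty}$ times something comparable to the energy — this is the source of the $\|\varphi_{tx}\|_{L^\infty}$ term in \eqref{lin-eqn-energy}.

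With this reduction, $\frac{d}{dt}E^{(s)}(v) = \frac{d}{dt}\int g\, T_{1-F(\varphi_x)} g\,dx$ splits into: (i) the term where $\D_t$ falls on the symbol $1-F(\varphi_x)$, bounded by $\|\varphi_{tx}\|_{L^\infty}\|g\|_{L^2}^2$; (ii) $2\,\mathrm{Re}\,\langle \D_t g, T_{1-F(\varphi_x)} g\rangle$, into which we substitute the equation for $g$. The principal term $\langle \D_x T_{B^0(\varphi)} g, T_{1-F(\varphi_x)} g\rangle$ is handled by observing $\D_x$ is skew-adjoint and $T_{B^0(\varphi)}$, $T_{1-F(\varphi_x)}$ are (nearly) self-adjoint since $B^0(\varphi)$ and $1-F(\varphi_x)$ are real and the $M$-quantization is Weyl for $\eta$-independent symbols — so the leading piece cancels after symmetrization and what remains is a commutator $[\D_x T_{B^0(\varphi)}, T_{1-F(\varphi_x)}]$ or $[T_{B^0(\varphi)}, T_{1-F(\varphi_x)}]$ of order zero, estimated via Lemma~\ref{l:para-prod} by $\|B^0(\varphi)\|_{\text{(suitable)}}\|\varphi_x\|_{W^{1,\infty}}\|g\|_{L^2}^2$; here one needs that $B^0(\varphi)$ is controlled in terms of $\|\varphi_x\|_{C^{1,\delta}}$, which follows from its defining formula via the difference quotient lemmas. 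The logarithmic term $2\langle \D_x T_{1-F(\varphi_x)}\log|D_x| g, T_{1-F(\varphi_x)} g\rangle$ is the most delicate positivity issue: after moving $\D_x$ across (skew-adjoint) and commuting, the leading term is $\langle T_{(1-F(\varphi_x))^2}\log|D_x|\D_x g, g\rangle$ type, which is \emph{not} sign-definite, but it is real up to acceptable commutator errors — one checks that the apparently dangerous term is purely imaginary (hence vanishes upon taking real part) modulo a commutator $[\log|D_x|\D_x, T_{(1-F(\varphi_x))^2}]$ which is of order $\lesssim \log$, absorbed using Lemma~\ref{Log commutator estimate} into $\|\varphi_x\|_{W^{1,\infty}}\|g\|_{L^2}^2$. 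Finally the source term $\tilde{\mathcal R}$ contributes $\|\tilde{\mathcal R}\|_{L^2}\|g\|_{L^2}$, which unwinds to the stated right-hand side. Summing the $E^{(s)}$ and $E^{(0)}$ pieces and using the comparability $E^s(v)\approx \|v\|_{H^s}^2$ (a consequence of Lemma~\ref{M-paraprod}, which makes $T_{1-F(\varphi_x)}$ and $T_{(1-F(\varphi_x))^s}$ boundedly invertible on the relevant frequencies) gives \eqref{lin-eqn-energy}.

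For existence and uniqueness, I would regularize: replace $T_{B^0(\varphi)}$, $\log|D_x|$, etc. by frequency-truncated operators (or add a parabolic regularization $-\epsilon\D_x^2$), solve the resulting ODE in $H^s$ by Picard iteration, derive the energy estimate \eqref{lin-eqn-energy} uniformly in the regularization parameter, and pass to the limit using the uniform bound plus weak compactness; uniqueness and continuity in $H^s$ follow from the same energy estimate applied to the difference of two solutions (which solves the same equation with the $\mathcal R$'s subtracted, so the difference is controlled in $L^2$, and then a standard interpolation/bootstrap upgrades to $H^s$).

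The main obstacle is the treatment of the logarithmic dispersive term in the energy identity: unlike a clean first-order transport term it does not trivially cancel, and one must carefully verify that its real part reduces to a logarithmic-order commutator rather than something of positive order — this is exactly where Lemma~\ref{Log commutator estimate} is essential, and where the choice of Weyl-type quantization (ensuring self-adjointness of $T_{1-F(\varphi_x)}$ and $T_{B^0(\varphi)}$) pays off. A secondary technical point is bookkeeping the conjugation errors so that all the ``bad'' terms land with at most the claimed powers of $\|\varphi_x\|_{C^{1,\delta}}$ and $\|\varphi_{tx}\|_{L^\infty}$, and with the $H^s\to L^2$ regularity gain that makes them absorbable into $E^s$.
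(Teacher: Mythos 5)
Your proposal is correct and follows essentially the same route as the paper: conjugate the paradifferential flow by $T_{(1-F(\varphi_x))^s}|D_x|^s$ to obtain an equation for $g$, exploit the self-adjointness of the Weyl-type quantization for the real symbols $B^0(\varphi)$ and $1-F(\varphi_x)$ together with skew-adjointness of $\D_x$ and $\D_x\log|D_x|$ so that the principal terms cancel, absorb the resulting commutator errors via Lemmas~\ref{Log commutator estimate} and \ref{l:para-prod}, use Lemma~\ref{M-paraprod} for coercivity of $E^s$, and close existence/uniqueness by regularization (the paper invokes the adjoint method, which is equivalent). The only cosmetic difference is in the $\log|D_x|$ term: the paper pre-commutes $\log|D_x|$ in the $g$-equation so that the pairing $\langle \D_x\log|D_x| h, h\rangle$ with $h = T_{1-F(\varphi_x)}g$ vanishes exactly, whereas you commute inside the energy identity and pick up the $[\D_x\log|D_x|, T_{(1-F(\varphi_x))^2}]$ error directly — both land in the same place.
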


\begin{proof}
Using that $T_{1 - F(\varphi_x)}$ is self-adjoint,
\begin{align*}
    \frac{d}{dt}E^{(s)}(t)&=\int 2 g_t\cdot T_{1 - F(\varphi_x)} g - g\cdot T_{F'(\varphi_x)\varphi_{tx}} g\,dx.
\end{align*}

Applying $T_{(1 - F(\varphi_x))^s} |D_x|^s$, we obtain an equation for $g_t$,
\[
  \D_tg - \D_xT_{B^0(\varphi)}g + T_{(1 - F(\varphi_x))^s} |D_x|^s \mathcal{R} + \mathcal R_1 = 2\D_x \log|D_x|T_{1 - F(\varphi_x)} g,
\]
where $\mathcal R_1$ consists of commutators which we will record and estimate momentarily. Using this, we have
\begin{equation}\label{dt-energy}
\begin{aligned}
    \frac{d}{dt}E^{(s)}(t)&=\int 2 (\D_xT_{B^0(\varphi)}g + 2\D_x \log|D_x|T_{1 - F(\varphi_x)} g -\mathcal{R} - \mathcal R_1)\cdot T_{1 - F(\varphi_x)} g \, dx \\
    &\quad - \int g\cdot T_{F'(\varphi_x)\varphi_{tx}} g\,dx.
\end{aligned}
\end{equation}
From the first integral on the right hand side, we rewrite the contribution
\begin{equation*}\begin{aligned}
2&\int \D_xT_{B^0(\varphi)}g\cdot T_{1 - F(\varphi_x)} g\,dx = 2\int \D_x(T_{B^0(\varphi)}g\cdot T_{1 - F(\varphi_x)} g)\,dx + \int \mathcal R_2 \cdot g\,dx,
\end{aligned}\end{equation*}
where as before, $\mathcal R_2$ consists of commutators which we record and estimate momentarily. Observe that the first term on the right is a divergence which thus vanishes. Similarly, since $\D_x \log|D_x|$ is skew-adjoint, the corresponding contribution to \eqref{dt-energy} vanishes and we conclude 
\begin{equation*}\begin{aligned}
    \frac{d}{dt}E^{(s)}(t)&= - \int 2 (\mathcal{R} + \mathcal R_1)\cdot T_{1 - F(\varphi_x)} g + g\cdot T_{F'(\varphi_x)\varphi_{tx}} g\,dx + \int \mathcal R_2 \cdot g\,dx ,
\end{aligned}\end{equation*}
where
\begin{equation*}\begin{aligned}
\mathcal R_1 &= [T_{(1 - F(\varphi_x))^s} |D_x|^s, \D_t - \D_xT_{B^0(\varphi)}] v \\
&\quad + 2\D_x [T_{F(\varphi_x)},\log|D_x|]g - 2[T_{(1 - F(\varphi_x))^s} |D_x|^s , \D_x \log|D_x|T_{1 - F(\varphi_x)}] v, \\
\mathcal R_2 &= \left(T_{1 - F(\varphi_x)} T_{\D_x B^0(\varphi)} + [T_{1 - F(\varphi_x)}, T_{B^0(\varphi)}] \D_x + T_{\D_x  F(\varphi_x)} T_{B^0(\varphi)}  \right)g.
\end{aligned}\end{equation*}

\

The second commutator of $\mathcal R_1$ may be estimated using the $\log |D_x|$ commutator Lemma~\ref{Log commutator estimate}. For the third commutator of $\mathcal R_1$, applying again Lemma~\ref{Log commutator estimate}, we may commute $\log|D_x|$ to the front, so this reduces to 
\begin{equation*}\begin{aligned}
\log|D_x| [T_{(1 - F(\varphi_x))^s} | D_x|^s, \D_x T_{1 - F(\varphi_x)}] v &= \log|D_x| [T_{(1 - F(\varphi_x))^s}, \D_x] | D_x|^s T_{1 - F(\varphi_x)} v \\
&\quad + \log|D_x|\D_x T_{(1 - F(\varphi_x))^s} [| D_x|^s, T_{1 - F(\varphi_x)}] v \\
&\quad + \log|D_x|\D_x [T_{(1 - F(\varphi_x))^s}, T_{1 - F(\varphi_x)}]|D_x|^s v.
\end{aligned}\end{equation*}
The third commutator on the right may be estimated using Lemma~\ref{l:para-prod}. The first may be written
\[
\log|D_x| T_{s(1 - F(\varphi_x))^{s - 1} F'(\varphi_x) \varphi_{xx}}|D_x|^s T_{1 - F(\varphi_x)} v,
\]
while the principal term of the second is
\[
-\log|D_x|\D_x T_{(1 - F(\varphi_x))^s} s|D_x|^s \D_x^{-1} T_{F'(\varphi_x) \varphi_{xx}} v,
\]
which cancel up to commutators and paraproducts estimated via Lemma~\ref{l:para-prod}.

Returning to $\mathcal R_1$, it remains to consider the first commutator. For the commutator with respect to $\D_t$, we have
\[
T_{s(1 - F(\varphi_x))^{s - 1} F'(\varphi_x) \varphi_{tx}} |D_x|^s  v
\]
which contributes to the right hand side of \eqref{lin-eqn-energy}. For the commutator with respect to $\D_xT_{B^0(\varphi)}$, and in particular to estimate $B^0(\varphi)$, we apply Lemma~\ref{l:yavg} to estimate
\begin{equation*}\begin{aligned}
\|B^0(\varphi)\|_{L_x^\infty} &\lesssim  \sup_{|y| > 1} \|y^{\delta} (F(\dq^y\varphi) - F(\varphi_x) e^{-iy})\|_{L^\infty_x} + \sup_{|y| \leq 1} \|y^{-\delta} (F(\dq^y\varphi) - F(\varphi_x) e^{-iy})\|_{L^\infty_x} \\
&\lesssim \|\varphi_x\|_{C^{0, \delta}},
\end{aligned}\end{equation*}
using the decay of $F$ for the first term. The same estimate for $B^0(\varphi)$ then suffices to estimate each term of $\mathcal R_2$. 

\

We conclude, using Lemma~\ref{M-paraprod} to pass between $\|v\|_{H^s}$ and $\|g\|_{L^2}$,
\begin{align*}
  \frac{d}{dt}E^{(s)}(t)&\lesssim (\|\varphi_x\|_{C^{1}}+\|\varphi_{tx}\|_{L^\infty})\|\varphi_x\|_{C^{1, \delta}}\|g\|^2_{L^2} + \| \mathcal{R}\|_{H^s}\|g\|_{L^2}
\end{align*}
which along with the similar case $s=0$ gives the estimate of the proposition. By using the adjoint method, it follows that the equation has a unique solution.
\end{proof}
\begin{corollary}\label{Energy Estimates}
   Let $R>0$. If $\varphi$ is a solution of \eqref{SQG} on an interval $[0,T]$ on which $\|\varphi\|_{C_t^0H_x^s}\lesssim R$, where $s>\frac{5}{2}$, then we have the energy estimate
   \begin{align*}
\|\varphi(t,x)\|_{H_x^s}\leq C(R)e^{C(R)\int_0^t\||D_x|^{-\delta}\varphi_x(\tau)\|^2_{C^{1,2\delta}}\,d\tau}\|\varphi_0\|_{H_x^s}.
   \end{align*}
   Moreover, if $v$ is a solution of the linearized equation \eqref{SQG-lin} on an interval $[0,T]$ where the solution $\varphi$ satisfies the previous conditions, then
   \begin{align*}
\|v(t,x)\|_{L_x^2}\leq C(R)e^{C(R)\int_0^t\||D_x|^{-\delta}\varphi_x(\tau)\|^2_{C^{1,2\delta}}\,d\tau}\|v_0\|_{L_x^2}.
   \end{align*}
\end{corollary}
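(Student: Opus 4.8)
The plan is to deduce Corollary~\ref{Energy Estimates} from Proposition~\ref{p:linear-exist} by specializing the source term and then running Gr\"onwall. For the first estimate, observe that $\varphi$ itself solves its own linearized equation: indeed, combining \eqref{SQG} with the paralinearization Proposition~\ref{paralinearization}, $\varphi$ satisfies \eqref{lin-eqn} with $v = \varphi$, $v_0 = \varphi_0$, and $\mathcal R = -\mathcal R(\varphi)$, the paralinearization remainder. Thus I would apply \eqref{lin-eqn-energy} with $s > \frac52$, using the remainder bound from Proposition~\ref{paralinearization},
\[
\|\mathcal R(\varphi)\|_{H^s} \lesssim_{\|\varphi_x\|_{C^{1,\delta}}} \||D_x|^{-\delta}\varphi_x\|^2_{C^{1,2\delta}} \|\varphi\|_{H^s},
\]
so that the source term in \eqref{lin-eqn-energy} is itself controlled by $\||D_x|^{-\delta}\varphi_x\|^2_{C^{1,2\delta}} E^s(t)$ up to a constant depending on $\|\varphi_x\|_{C^{1,\delta}}$.

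Next I would reconcile the two forms of the coefficient appearing in front of $E^s$ in \eqref{lin-eqn-energy}, namely $(\|\varphi_x\|_{C^{1,\delta}} + \|\varphi_{tx}\|_{L^\infty})\|\varphi_x\|_{C^{1,\delta}}$, with the quantity $\||D_x|^{-\delta}\varphi_x\|^2_{C^{1,2\delta}}$ in the statement. Here I would use the equation \eqref{SQG} to trade the time derivative $\varphi_{tx}$ for spatial quantities: differentiating \eqref{SQG} in $x$ and using the paralinearization of $A_\varphi\varphi_x$ together with the mapping properties of the paradifferential operators (in particular that $\D_x T_{B^0(\varphi)}$ and $\D_x T_{F(\varphi_x)}\log|D_x|$ are order-one operators with coefficients controlled by low norms of $\varphi_x$), one bounds $\|\varphi_{tx}\|_{L^\infty}$ by $K(\|\varphi\|_{C^0_t H^s_x}) \||D_x|^{-\delta}\varphi_x\|_{C^{1,2\delta}}$ for $s > \frac52$, absorbing the loss into the $C(R)$ constant via Sobolev embedding $H^s \hookrightarrow C^{2,\delta}$. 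Likewise $\|\varphi_x\|_{C^{1,\delta}} \lesssim \|\varphi\|_{H^s} \lesssim R$. With all coefficients thereby bounded by $C(R)\,\||D_x|^{-\delta}\varphi_x\|^2_{C^{1,2\delta}}$, \eqref{lin-eqn-energy} becomes
\[
\frac{d}{dt} E^s(t) \leq C(R)\,\||D_x|^{-\delta}\varphi_x(t)\|^2_{C^{1,2\delta}}\, E^s(t),
\]
and Gr\"onwall's inequality gives $E^s(t) \leq E^s(0)\exp\big(C(R)\int_0^t \||D_x|^{-\delta}\varphi_x\|^2_{C^{1,2\delta}}\,d\tau\big)$. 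Finally, the comparability of $E^s$ with $\|\cdot\|_{H^s}^2$ — which holds because of the parameter-dependent quantization (choosing $M$ via Lemma~\ref{M-paraprod} so that $\|T_{1-(1-F(\varphi_x))^r}\|_{L^2\to L^2} < 1$, hence $T_{1-F(\varphi_x)}$ and $T_{(1-F(\varphi_x))^s}$ are invertible on $L^2$ up to acceptable errors) — converts this into the claimed bound on $\|\varphi(t)\|_{H^s_x}$, after taking square roots and allowing $C(R)$ to change.

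For the linearized estimate, the argument is the same but simpler: $v$ solving \eqref{SQG-lin} satisfies, by Proposition~\ref{Linearized equation estimate}, the paradifferential equation \eqref{lin-eqn} at regularity $s = 0$ with $\mathcal R = \mathcal R(\varphi,v)$ obeying $\|\mathcal R(\varphi,v)\|_{L^2} \lesssim_{\|\varphi_x\|_{C^{1,\delta}}} \||D_x|^{-\delta}\varphi_x\|^2_{C^{1,2\delta}}\|v\|_{L^2}$; applying \eqref{lin-eqn-energy} with $s = 0$, handling $\|\varphi_{tx}\|_{L^\infty}$ and $\|\varphi_x\|_{C^{1,\delta}}$ exactly as above, and running Gr\"onwall on $E^0(t) \approx \|v(t)\|_{L^2}^2$ yields the second inequality. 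The main obstacle I anticipate is the reconciliation step: carefully bounding $\|\varphi_{tx}\|_{L^\infty}$ in terms of $\||D_x|^{-\delta}\varphi_x\|_{C^{1,2\delta}}$ using \eqref{SQG} and the paralinearization, and checking that the factor of $\||D_x|^{-\delta}\varphi_x\|$ with the negative-order weight really does come out — this is where the precise structure of $A_\varphi\varphi_x$ (in particular that its top-order part is $\D_x T_{B^0(\varphi)} - 2\D_x T_{F(\varphi_x)}\log|D_x|$, with $\log$-losses absorbed by Lemma~\ref{Log commutator estimate}) and the $s > \frac52$ threshold both get used.
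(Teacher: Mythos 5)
Your proposal follows the paper's proof in its essentials: apply Proposition~\ref{p:linear-exist} (with $v=\varphi$ and $\mathcal R=\mathcal R(\varphi)$ from Proposition~\ref{paralinearization} for the $H^s$ estimate, and with $v$ solving \eqref{SQG-lin} and $\mathcal R=\mathcal R(\varphi,v)$ from Proposition~\ref{paralinearization-lin} for the $L^2$ estimate), bound the coefficient $(\|\varphi_x\|_{C^{1,\delta}}+\|\varphi_{tx}\|_{L^\infty})\|\varphi_x\|_{C^{1,\delta}}$ by $C(R)\,\||D_x|^{-\delta}\varphi_x\|^2_{C^{1,2\delta}}$ using Sobolev embedding and the equation, and close with Gr\"onwall plus the coercivity of $E^s$ coming from Lemma~\ref{M-paraprod}.

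The one place you deviate is in bounding $\|\varphi_{tx}\|_{L^\infty}$. You propose to route through the paralinearization of $A_\varphi\varphi_x$, relying on mapping properties of $\D_xT_{B^0(\varphi)}$ and $\D_xT_{F(\varphi_x)}\log|D_x|$ and a Sobolev embedding on $\D_x\mathcal R(\varphi)$. The paper instead writes out the raw form $\varphi_{tx}=2\log|D_x|\varphi_{xx}-\D_x\int F(\dq^y\varphi)\sdq^y\varphi_x\,dy$, estimates the nonlinear integral directly via Lemma~\ref{Trilinear integral estimate} and the product rule, and bounds $\log|D_x|\varphi_{xx}$ separately by $\||D_x|^\delta\varphi_{xx}\|_{L^\infty}+\||D_x|^{1-\delta}\varphi_x\|_{L^\infty}$. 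Both routes give the needed bound $\|\varphi_{tx}\|_{L^\infty}\lesssim C(R)\,\||D_x|^{-\delta}\varphi_x\|_{C^{1,2\delta}}$, and you correctly flag this reconciliation as the crux; the paper's direct computation is cleaner in that it avoids pushing a second $\D_x$ through the paradifferential operators and extracting a pointwise bound from the $H^s$-only remainder estimate, whereas your route requires those extra commutator and embedding steps to land on the negatively-weighted coefficient. The overall argument is otherwise the same.
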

\begin{proof}
From Proposition \ref{p:linear-exist}, we have the energy estimate
\begin{align*}
\frac{d}{dt}E^{(s)}(t)&\lesssim (\|\varphi_x\|_{C^{1, \delta}}+\|\varphi_{tx}\|_{L^\infty})\|\varphi_x\|_{C^{1, \delta}} E^{(s)}(t) + \|\mathcal{R}\|_{H^s}\|v\|_{H^s}.
\end{align*}
In both situations, $\varphi$ is a solution of \eqref{SQG}, so in order to control $\displaystyle \|\varphi_{tx}\|_{L_x^\infty}$, we write
\begin{align*}
    \varphi_{tx}=2\log|D_x|\varphi_{xx}-\partial_x\int F(\dq^y\varphi)\sdq^y\varphi_x\,dy.
\end{align*}
From Lemma \ref{Trilinear integral estimate} and the product rule, we have
\begin{align*}
    \left\|\partial_x\int F(\dq^y\varphi)\sdq^y\varphi_x\,dy\right\|_{L_x^\infty}&\lesssim\|\varphi_x\|_{L_x^\infty}\|\varphi_x\|_{W_x^{1,\infty}}\||D_x|^{1-\delta}\varphi\|_{W_x^{1+2\delta,\infty}}.
\end{align*}
We also have
\begin{align*}
    \|\log|D_x|\varphi_{xx}\|_{L_x^\infty}&\lesssim \||D_x|^\delta\varphi_{xx}\|_{L_x^\infty}+\||D_x|^{1-\delta}\varphi_{x}\|_{L_x^\infty}.
\end{align*}
In the first case, $v=\varphi$ solves the \eqref{SQG} equation, so by Proposition \ref{paralinearization}, we know that 
\begin{align*}
    \|\mathcal{R}(\varphi)\|_{H^s}\lesssim_{\|\varphi_x\|_{C^{1,\delta}}}\|\varphi_x\|^2_{C^{1,\delta}}\|\varphi\|_{H^s},
\end{align*}
and an application of Gr\"onwall's lemma, along with the coercivity bounds implies the claimed estimate. When $v$ solves the linearized equation \eqref{SQG-lin}, Proposition \ref{paralinearization-lin} similarly implies that
\begin{align*}
\|\mathcal{R}(\varphi, v)\|_{L^2}\lesssim_{\|\varphi_x\|_{C^{1,\delta}}}\|\varphi_x\|^2_{C^{1,\delta}}\|v\|_{L^2}.
\end{align*}

\end{proof}\section{Local well-posedness}\label{s:lwp} 

In this section we establish Theorem~\ref{t:lwp}, our main local well-posedness result in Sobolev spaces for the SQG equation \eqref{SQG}. We do this by first constructing smooth solutions using an iterative scheme, and then we employ frequency envelopes in order to construct rough solutions as limits of smooth ones and to prove continuous dependence on the initial data.

\

We first consider smoother data $\varphi_0 \in H^s$ with $s\geq 4$. Fix $R>0$ and choose $M$ as in Lemma~\ref{M-paraprod}. We construct the sequence 
\[
\varphi^{(0)}=\varphi_0(x), \qquad \varphi^{(n)}=\mathbf{G}(\varphi^{(n-1)}), \qquad n\geq 1,
\]
where we define the operator $\mathbf{G}(\varphi) = v$ using Proposition~\ref{p:linear-exist} with $\mathcal{R}=\mathcal{R}(\varphi)$, the paralinearization error from Proposition~\ref{paralinearization}. We have
\[
\D_t\varphi^{(n)} - \D_xT_{B^0(\varphi^{(n-1)})}\varphi^{(n)} +\mathcal{R}(\varphi^{(n-1)}) = 2\D_x T_{1 - F(\varphi^{(n-1)}_x)} \log|D_x|\varphi^{(n)}.
\]
An application of Gr\"onwall's inequality with Proposition~\ref{p:linear-exist} shows that
\begin{align*}
    E^{(s)}(\varphi^{(n + 1)})(t) &\leq e^{C(R)\int_0^t (\|\varphi^{(n)}_x\|_{C_x^{1,\delta}}+\|\varphi^{(n)}_{tx}\|_{L_x^\infty})\|\varphi^{(n)}_x\|_{C_x^{1,\delta}} \,d\tau} \\
    &\quad \cdot \left(E^{(s)}(\varphi^{(n + 1)})(0) + \int_0^t \|\varphi^{(n)}_x\|^2_{C_x^{1,\delta}}\|\varphi^{(n)}\|^2_{H_x^{s}}\,d\tau\right),
\end{align*}
along with an easy induction on $n$ and the terms of the sequence $(\|\varphi^{(n)}_{tx}\|_{L_x^\infty})_{n\geq 0}$ show that for sufficiently small $T>0$ depending on $R$, $\varphi^{(n)}$ has uniform $H_x^s$ bounds.

Moreover, we show that for sufficiently small $T>0$, $\varphi^{(n)}$ is Cauchy. Let $\varphi^{(m)}$ and $\varphi^{(n)}$ be two terms of the sequence. Denoting $v = \varphi^{(m)} - \varphi^{(n)}$, we have
\begin{align*}
    \partial_t v& - \partial_xT_{B^0(\varphi^{(m-1)})}v +\partial_xT_{B^0(\varphi^{(m-1)})-B^0(\varphi^{(n-1)})}\varphi^{(n)} + \mathcal{R}(\varphi^{(m-1)})-\mathcal{R}(\varphi^{(n-1)})\\
    &=2\D_x T_{1 - F(\varphi^{(m-1)}_x)} \log|D_x|v + 2\D_x T_{F(\varphi^{(m-1)}_x) - F(\varphi^{(n-1)}_x)} \log|D_x|\varphi^{(n)}.
\end{align*}
We now apply the energy estimate of Proposition~\ref{p:linear-exist} with 
\begin{align*}
    \mathcal R &=\partial_xT_{B^0(\varphi^{(m-1)})-B^0(\varphi^{(n-1)})}\varphi^{(n)} - 2\D_x T_{F(\varphi^{(m-1)}_x) - F(\varphi^{(n-1)}_x)} \log|D_x|\varphi^{(n)} \\
    &\quad + \mathcal{R}(\varphi^{(m-1)})-\mathcal{R}(\varphi^{(n-1)}),
\end{align*}
estimating
\begin{align*}
    &\|\D_x T_{F(\varphi^{(m-1)}_x) - F(\varphi^{(n-1)}_x)} \log|D_x|\varphi^{(n)}\|_{L_x^2}\lesssim \|\varphi^{(m-1)} - \varphi^{(n-1)}\|_{L_x^2}\|\log|D_x|\varphi^{(n)}_{xx}\|_{L_x^\infty}, \\
    &\|\partial_xT_{B^0(\varphi^{(m-1)})-B^0(\varphi^{(n-1)})}\varphi^{(n)} \|_{L_x^2}\lesssim \|\varphi^{(m-1)} - \varphi^{(n-1)}\|_{L_x^{2}}\|\varphi^{(n)}_{xx}\|_{L_x^\infty}.
\end{align*}
An application of Gr\"onwall's inequality shows that the sequence is Cauchy in $ L_x^2$, for $T>0$ small enough. This settles the existence. Uniqueness follows from the energy estimate for the difference.

\

To establish the local well-posedness result at low regularity, we follow the approach outlined in \cite{ITprimer}. We consider $\varphi_0 \in H^s$ with $s>\frac52$. Let $\varphi_0^h=(\varphi_0)_{\leq h}$, where $h\in\mathbb{Z}$. Since $\varphi_0^h \rightarrow u_0$ in $H_x^s$, we may assume that $ \|\varphi_0^h\|_{H_x^s}<R$ for all $h$.

We construct a uniform $H_x^s $ frequency envelope $\{c_k\}_{k\in\mathbb{Z}}$ for $\varphi_0$ having the following properties:

\begin{enumerate}
     \item[a)]Uniform bounds:     
     \[ \|P_k(\varphi_0^h)_x\|_{H_x^s}\lesssim c_k,\]
     
     \item[b)]High frequency bounds:     
     \[\|\varphi_0^h\|_{H_x^N}\lesssim 2^{h(N-s)}c_h, \qquad N - s \geq 4, \]
     
     \item[c)]Difference bounds:     
     \[\|\varphi_0^{h+1}-\varphi_0^h\|_{\dot H^s}\lesssim 2^{-sh}c_h,\]
     
     \item[d)]Limit as $h\rightarrow\infty$:     
     \[ \varphi_0^h\rightarrow \varphi_0 \in H_x^s.\]
     
 \end{enumerate}

Let $\varphi^h$ be the solutions with initial data $\varphi_0^h$. Using the energy estimate for the solution $\varphi$ of \eqref{SQG} from Corollary \ref{Energy Estimates}, we deduce that there exists $T = T(\|\varphi_0\|_{H_x^s}) > 0$ on which all of these solutions are defined, with high frequency bounds
    \[ 
    \|\varphi^h\|_{C_t^0H_x^N}\lesssim \|\varphi_0^h\|_{H_x^N} \lesssim 2^{h(N-s)}c_h.
     \]
Further, by using the energy estimates for the solution of the linearized from Corollary \ref{Energy Estimates}, we have
\[
\|\varphi^{h+1}-\varphi^h\|_{C_t^0L_x^2}\lesssim 2^{-sh}c_h.
\]
By interpolation, we infer that
\[
\|\varphi^{h+1}-\varphi^h\|_{C_t^0H_x^{s}}\lesssim c_h.
\]

As in \cite{ITprimer}, we get
\[
\|P_k \varphi^h \|_{C_t^0H_x^s } \lesssim c_k
\]
and that
\[
\|\varphi^{h+k}-\varphi^h\|_{C_t^0H_x^s }\lesssim c_{h\leq\cdot<h+k}=\left(\sum_{\substack{n=h}}^{h+k-1}c_n^2\right)^{\frac{1}{2}}
\]
for every $k\geq 1$. Thus, $\varphi^h$ converges to an element $\varphi$ belonging to $C_t^0H_x^s([0,T]\times\mathbb{R})$.  Moreover, we also obtain
\begin{equation}\label{convergence estimate}
\begin{aligned}
\|\varphi^h - \varphi\|_{C_t^0H_x^s} &\lesssim c_{\geq h}=\left(\sum_{\substack{n=h}}^{\infty} c_n^2\right)^{\frac{1}{2}}.
\end{aligned}
\end{equation}

We now prove continuity with respect to the initial data. We consider a sequence
 \[
 \varphi_{0j}\rightarrow \varphi_0 \in H_x^s
 \]
 and an associated sequence of $ H_x^s$-frequency envelopes $\{c^j_k\}_{k \in \Z}$, each satisfying the analogous properties enumerated above for $c_k$, and further such that $c^j_k \rightarrow c_k$ in $l^2(\mathbb{Z})$. In particular,
\begin{equation}\label{convergence estimate j}
\|\varphi_j^h - \varphi_j\|_{C_t^0H_x^s} \lesssim c^j_{\geq h}=\left(\sum_{\substack{n=h}}^{\infty} (c^j_n)^2\right)^{\frac{1}{2}}.
\end{equation}
 
Using the triangle inequality with \eqref{convergence estimate} and \eqref{convergence estimate j}, we write
\begin{align*}
\|\varphi_j - \varphi\|_{C_t^0H_x^s } &\lesssim \|\varphi^h - \varphi\|_{C_t^0H_x^s}+\|\varphi_j^h - \varphi_j\|_{C_t^0H_x^s}+\|\varphi_j^h - \varphi^h\|_{C_t^0H_x^s}\\
&\lesssim c_{\geq h}+c^j_{\geq h}+\|\varphi_j^h - \varphi^h\|_{C_t^0H_x^s}.
\end{align*}
To address the third term, we observe that for every fixed $h$, $\varphi_j^h \rightarrow \varphi^h$ in $H_x^s$. We conclude $\varphi_j \rightarrow \varphi$ in $C_t^0H_x^s ([0,T]\times\mathbb{R})$ and therefore $\varphi_j\rightarrow \varphi$ in $C_t^0 X^s([0,T]\times\mathbb{R})$.

\section{Global well-posedness}\label{s:gwp}

In this section we prove global well-posedness for the SQG equation \eqref{SQG} with small and localized initial data. We do this by using the wave packet method of Ifrim-Tataru, which is systematically described in \cite{ITpax}.

\subsection{Notation}

Consider the linear flow
\[
i\D_t \varphi - A(D)\varphi = 0
\]
and the linear operator
\[
L = x - tA'(D).
\]
In our setting, we have the symbol
\[
a(\xi) = -2\xi \log |\xi|
\]
and thus
\[
A(D)=-2D\log|D|, \qquad  L = x+2t+2t\log|D|.
\]
Recall that we define the weighted energy space
\[
\|\varphi\|_X = \|\varphi\|_{H^s} + \|L\partial_x \varphi\|_{L^2} \approx \|\varphi\|_{H^s} + \|\partial_x L \varphi\|_{L^2},
\]
which when frequency localized may be written
\[
\|\varphi_\lambda\|_X \approx \|\varphi_\lambda\|_{H^s} + \lambda\|L\varphi_\lambda\|_{L^2}.
\]

\

We partition the frequency space into dyadic intervals $I_\lambda$ localized at dyadic frequencies $\lambda \in 2^\Z$, and consider the associated partition of velocities
\[
J_\lambda = a'(I_\lambda)
\]
which form a covering of the real line, and have equal lengths. To these intervals $J_\lambda$ we select reference points $v_\lambda \in J_\lambda$, and consider an associated spatial partition of unity
\[
1 = \sum_\lambda \chi_\lambda(x), \qquad \text{supp } \chi_\lambda \subseteq  \overline{J_\lambda},\qquad \chi_\lambda=1\text{ on }J_\lambda,
\]
where $\overline{J_\lambda}$ is a slight enlargement of $J_\lambda$, of comparable length, uniformly in $\lambda$.

Lastly, we consider the related spatial intervals, $tJ_\lambda$, with reference points $x_\lambda  = tv_\lambda \in tJ_\lambda$. 
\

\subsection{Overview of the proof}

We provide a brief overview of the proof.

\

1. We make the bootstrap assumption for the pointwise bound
\begin{equation}\label{bootstrap}
    \|\varphi(t)\|_Y \lesssim C \eps \langle t\rangle^{-\frac{1}{2}}
\end{equation}
where $C$ is a large constant, in a time interval $t \in [0, T]$ where $T>1$. 

\

2. The energy estimates for \eqref{SQG} and the linearized equation will imply
\begin{equation}\label{Regular Energy Estimate}
    \|\varphi(t)\|_{X} \lesssim \langle t\rangle^{C^2\eps^2} \|\varphi(0)\|_X.
    \end{equation}
    
\

3. We aim to improve the bootstrap estimate \eqref{bootstrap} to 
\begin{equation}\label{pointwise}
    \|\varphi(t)\|_Y \lesssim \eps \langle t\rangle^{-\frac12}.
\end{equation}
We use vector field inequalities to derive bounds of the form
\begin{equation}\label{pt-estimate}
    \|\varphi(t)\|_Y\lesssim \eps \langle t\rangle^{-\frac{1}{2} + C\eps^2},
\end{equation}
which is the desired bound but with an extra $t^{C\eps^2}$ loss.

\

4. In order to rectify the extra loss, we use the wave packet testing method define a suitable asymptotic profile $\gamma$, which is then shown to be an approximate solution for an ordinary differential equation. This enables us to obtain suitable bounds for the asymptotic profile without the aforementioned loss, which can then be transferred back to the solution $\varphi$.

\subsection{Energy estimates} 

From Corollary \ref{Energy Estimates}, and by using the fact that $\eps\ll 1$,
\begin{align*}
 \|\varphi(t,x)\|_{H_x^s}\lesssim e^{C\int_0^t\|\varphi_x\|^2_{C_x^{1,\delta}}\,d\tau}\|\varphi_0\|_{H_x^s}. 
\end{align*}

Let $u=L\partial_x\varphi+\varphi$, which satisfies the linearized equation. From Corollary \ref{Energy Estimates}, along with Gr\"onwall's lemma and the fact that $\eps\ll 1$, we have
\begin{align*}
 \|u(t,x)\|_{L_x^2}\lesssim e^{C\int_0^t\|\varphi_x(\tau)\|^2_{C_x^{1,\delta}}\,d\tau}\|u_0\|_{L_x^2}. 
\end{align*}
Along with the bootstrap assumptions, these readily imply that
\begin{equation}\label{Vector Field Energy Estimate}
    \|\varphi\|_{X}\lesssim \|\varphi(t)\|_{H_x^s} + \|u(t)\|_{L_x^2}\lesssim \eps e^{C^2\eps^2\int_0^t\langle s\rangle^{-1}\,ds}\lesssim \eps \langle t\rangle^{C^2\eps^2}.
\end{equation}

\

\subsection{Vector field bounds} 
Proposition 2.1 from \cite{ITpax} implies that
\begin{align*}
    \|\varphi_\lambda\|^2_{L_x^{\infty}}&\lesssim \frac{1}{t}(\|\varphi_\lambda\|_{L_x^2}\|L\partial_x\varphi_\lambda\|_{L_x^2}+\|\varphi_\lambda\|^2_{L_x^2}).
\end{align*}
When $\lambda\leq 1$,
\begin{align*}
  \|\varphi_\lambda\|_{L_x^{\infty}}
  &\lesssim \frac{1}{\sqrt{t}}\lambda^{-\frac34 + 2\delta}(\|\lambda^{\frac32 - 4\delta}\varphi_\lambda\|^{1/2}_{L_x^2}\|L\partial_x\varphi_\lambda\|^{1/2}_{L_x^2}+\|\lambda^{\frac34-2\delta}\varphi_\lambda\|_{L_x^2}) \lesssim \frac{1}{\sqrt{t}}\lambda^{-(\frac34 -2\delta)}\|\varphi\|_X
\end{align*}
and when $\lambda>1$,
\begin{align*}
  \|\varphi_\lambda\|_{L_x^{\infty}}
  &\lesssim \frac{1}{\sqrt{t}}\lambda^{-2 - 2 \delta}(\|\lambda^{4+4\delta}\varphi_l\|^{1/2}_{L_x^2}\|L\partial_x\varphi_\lambda\|^{1/2}_{L_x^2}+\|\lambda^{2+2\delta}\varphi_\lambda\|_{L_x^2}) \lesssim \frac{1}{\sqrt{t}}\lambda^{-(2+2\delta)}\|\varphi\|_X.
\end{align*}
By dyadic summation and Bernstein's inequality, we deduce the bound
\begin{equation}\label{Pointwise Vector Field Bound 2}
    \|\varphi\|_Y=\||D_x|^{3/4-\delta}\varphi\|_{L_x^{\infty}}+\||D_x|^{1+\delta}\varphi_x\|_{L_x^{\infty}}\lesssim\frac{\|\varphi\|_X}{\sqrt{t}}.
\end{equation}

By the localized dispersive estimate \cite[Proposition 5.1]{ITpax}, 
\begin{align*}
    |\varphi_\lambda(x)|^2\lesssim\frac{1}{|x-x_{\lambda}|t\frac{1}{\lambda}}(\|L\varphi_\lambda\|_{L_x^2}+\lambda^{-1}\|\varphi_\lambda\|_{L_x^2})^2,
\end{align*}
 which implies that
 \begin{equation}\label{Pointwise Elliptic Estimate}
    \|(1-\chi_{\lambda})\varphi_\lambda\|_{L_x^{\infty}}\lesssim \frac{\lambda^{-1/2}}{t}(\|L\partial_x\varphi_\lambda\|_{L_x^2}+\|\varphi_\lambda\|_{L_x^2})\lesssim\frac{\lambda^{-1/2}}{t}\|\varphi\|_X
\end{equation}
 
\

To end this section we record the following elliptic bounds:

\begin{lemma}\label{Elliptic bounds for the derivative}
We have
\begin{equation}
\label{Pointwise Elliptic Estimate 3}
    \||D_x|^{1+\delta}\partial_x((1-\chi_{\lambda})\varphi_\lambda)\|_{L_x^{\infty}}\lesssim \frac{\lambda^{3/2+\delta}}{t}\|\varphi\|_X
    \end{equation}
    \begin{equation}
\label{Pointwise Elliptic Estimate 2}
   \||D_x|^{3/4-\delta}((1-\chi_{\lambda})\varphi_\lambda)\|_{L_x^{\infty}}\lesssim \frac{\lambda^{1/4-\delta}}{t}\|\varphi\|_X,
\end{equation}
and
\begin{equation}\label{L2 Elliptic Estimate}
\|(1-\chi_{\lambda})\varphi_\lambda\|_{L_x^{2}}\lesssim \frac{\lambda^{-1}}{t}\|\varphi\|_X,
\end{equation}
Moreover, the difference quotient satisfies the bounds
  \begin{align*}
\|(1-\chi_{\lambda})\dq^y\varphi_\lambda\|_{L_x^{\infty}}&\lesssim  \frac{\lambda^{1/2}}{t}\|\varphi\|_X,
  \end{align*}
  and
  \begin{align*}
\|(1-\chi_{\lambda})\dq^y\varphi_\lambda\|_{L_x^{2}}&\lesssim  \frac{\|\varphi\|_X}{t}.
  \end{align*}
\end{lemma}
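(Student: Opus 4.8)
The plan is to obtain all four bounds as refinements of \eqref{Pointwise Elliptic Estimate}, i.e.\ of the localized dispersive estimate \cite[Proposition 5.1]{ITpax}, using two elementary facts. Writing $\xi_\lambda\in I_\lambda$ for the frequency with $a'(\xi_\lambda)=v_\lambda$, so that $x_\lambda=t\,a'(\xi_\lambda)$, one has, on the Fourier support of $\varphi_\lambda$, the decomposition
\[
L\varphi_\lambda=(x-x_\lambda)\varphi_\lambda+2t\big(\log|D_x|-\log|\xi_\lambda|\big)\varphi_\lambda,
\]
in which the multiplier $\log|D_x|-\log|\xi_\lambda|$ and its derivative have size $\lesssim1$ and $\lesssim\lambda^{-1}$ on $I_\lambda$. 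Together with $\|L\varphi_\lambda\|_{L^2}\lesssim\lambda^{-1}\|\varphi\|_X$ and $\|\varphi_\lambda\|_{L^2}\lesssim\|\varphi\|_X$ (from the definition of $\|\cdot\|_X$), this will let me transfer estimates between $\varphi_\lambda$, $L\varphi_\lambda$, and their Fourier multiples. The second fact is that $|x-x_\lambda|\gtrsim t$ on $\supp(1-\chi_\lambda)$.

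For \eqref{Pointwise Elliptic Estimate 3} and \eqref{Pointwise Elliptic Estimate 2} --- the $L^\infty$ bounds for $|D_x|^\sigma\D_x^k\big((1-\chi_\lambda)\varphi_\lambda\big)$ with order $\sigma+k\in\{2+\delta,\ 3/4-\delta\}$ --- I would insert a fattened Littlewood--Paley multiplier $\tilde P_{\sim\lambda}$ adapted to $I_\lambda$ and write
\[
|D_x|^\sigma\D_x^k\big((1-\chi_\lambda)\varphi_\lambda\big)=\big(|D_x|^\sigma\D_x^k\tilde P_{\sim\lambda}\big)\big((1-\chi_\lambda)\varphi_\lambda\big)+|D_x|^\sigma\D_x^k(1-\tilde P_{\sim\lambda})\big((1-\chi_\lambda)\varphi_\lambda\big).
\]
The operator $|D_x|^\sigma\D_x^k\tilde P_{\sim\lambda}$ is convolution with a kernel of $L^1$-mass $\lesssim\lambda^{\sigma+k}$, so the first term is $\lesssim\lambda^{\sigma+k}\|(1-\chi_\lambda)\varphi_\lambda\|_{L^\infty}\lesssim\lambda^{\sigma+k}\cdot\lambda^{-1/2}t^{-1}\|\varphi\|_X$ by \eqref{Pointwise Elliptic Estimate}, which is exactly the claimed bound. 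Because $1-\chi_\lambda$ is slowly varying, with Fourier support of size $\lesssim t^{-1}$, the second term involves only the frequency-$\not\sim\lambda$ part of $(1-\chi_\lambda)\varphi_\lambda$ and is super-polynomially small in $\lambda t$; in the complementary range $\lambda t\lesssim1$ one uses instead the crude Bernstein bound $\|\varphi_\lambda\|_{L^\infty}\lesssim\lambda^{1/2}\|\varphi\|_X$.

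The estimate \eqref{L2 Elliptic Estimate} is the $L^2$ counterpart of \eqref{Pointwise Elliptic Estimate}, and I expect it to be the delicate point. It should follow from the $L^2$ form of \cite[Proposition 5.1]{ITpax}, i.e.\ $\|(1-\chi_\lambda)\varphi_\lambda\|_{L^2}\lesssim t^{-1}\big(\|L\varphi_\lambda\|_{L^2}+\lambda^{-1}\|\varphi_\lambda\|_{L^2}\big)\lesssim\lambda^{-1}t^{-1}\|\varphi\|_X$. The issue is that the naive reduction $(1-\chi_\lambda)\varphi_\lambda=\big((1-\chi_\lambda)/(x-x_\lambda)\big)\cdot(x-x_\lambda)\varphi_\lambda$, using $|(1-\chi_\lambda)/(x-x_\lambda)|\lesssim t^{-1}$ and the decomposition above, disposes cleanly of the $L\varphi_\lambda$ contribution but leaves, from the $2t(\log|D_x|-\log|\xi_\lambda|)\varphi_\lambda$ term, a residue of size $\|\varphi_\lambda\|_{L^2}$: the prefactor $2t$ exactly cancels the gained $t^{-1}$ because $\log|D_x|-\log|\xi_\lambda|$ is only of order $1$ on $\supp\hat\varphi_\lambda$, and iterating commutators does not help. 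Recovering the sharp $t^{-1}$ rate therefore requires the genuine off-light-cone decay of the oscillatory integral defining $\varphi_\lambda$, with the single available $L$-weight providing the one integration by parts one can afford; this is what \cite[Proposition 5.1]{ITpax} supplies.

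The difference-quotient bounds I would deduce from \eqref{Pointwise Elliptic Estimate} and \eqref{L2 Elliptic Estimate} applied to $\varphi_\lambda$ and to $\D_x\varphi_\lambda$, the latter being frequency localized in $I_\lambda$ with $\|L\D_x\varphi_\lambda\|_{L^2}+\lambda^{-1}\|\D_x\varphi_\lambda\|_{L^2}\lesssim\|\varphi\|_X$, hence obeying the same elliptic bounds as $\varphi_\lambda$ up to a factor $\lambda$. For $|y|\le\lambda^{-1}$ I use $|\dq^y\varphi_\lambda(x)|\le\sup_{|s|\le|y|}|\D_x\varphi_\lambda(x+s)|$; since the shift is small compared with $t$, the cutoff $1-\chi_\lambda$ may be harmlessly enlarged, and \eqref{Pointwise Elliptic Estimate}, resp.\ \eqref{L2 Elliptic Estimate} (bounding a maximal function in the latter case), applied to $\D_x\varphi_\lambda$, gives $\lambda^{1/2}t^{-1}\|\varphi\|_X$, resp.\ $t^{-1}\|\varphi\|_X$. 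For $|y|>\lambda^{-1}$ I use $|\dq^y\varphi_\lambda(x)|\le|y|^{-1}\big(|\varphi_\lambda(x+y)|+|\varphi_\lambda(x)|\big)$ with $|y|^{-1}\le\lambda$: the $\varphi_\lambda(x)$ term is handled directly by \eqref{Pointwise Elliptic Estimate}/\eqref{L2 Elliptic Estimate}, while in the $\varphi_\lambda(x+y)$ term I distinguish $\lambda^{-1}<|y|\lesssim t$, where $1-\chi_\lambda(\cdot-y)$ is comparable to an enlargement of $1-\chi_\lambda$, from $|y|\gtrsim t$, where $|y|^{-1}\lesssim t^{-1}$ directly absorbs $\|\varphi_\lambda\|_{L^\infty},\lambda^{-1}\|\varphi_\lambda\|_{L^2}\lesssim\|\varphi\|_X$. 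As noted, the main obstacle is \eqref{L2 Elliptic Estimate}: one must use the oscillatory cancellation inside $\varphi_\lambda$ encoded by the $L$-weight, not merely the spatial weight $x-x_\lambda$, to achieve the sharp decay.
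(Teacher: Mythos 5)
Your treatment of the first three estimates --- distributing derivatives by a fattened Littlewood--Paley projector $\tilde P_{\sim\lambda}$ together with Leibniz, absorbing the frequency mismatch by the rapid decay of $\widehat{\chi_\lambda(\cdot/t)}$, and reading \eqref{L2 Elliptic Estimate} off the $L^2$ form of \cite[Proposition 5.1]{ITpax} --- is essentially a more explicit version of what the paper does (the paper distributes a single $\partial_x$, uses $|\partial_x\chi_\lambda(\cdot/t)|\lesssim t^{-1}$, and invokes \eqref{Pointwise Elliptic Estimate} for $\varphi$ and for $\partial_x\varphi$).

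The difference-quotient bounds, however, have a genuine gap. In the regime $\lambda^{-1}<|y|\lesssim t$ you bound $|\dq^y\varphi_\lambda(x)|\le|y|^{-1}(|\varphi_\lambda(x+y)|+|\varphi_\lambda(x)|)$ and then claim that $1-\chi_\lambda((\cdot-y)/t)$ is comparable to an enlargement of $1-\chi_\lambda(\cdot/t)$. This fails when $|y|/t\sim1$: the interval $J_\lambda$ has unit length, so a shift of $x/t$ by an $O(1)$ amount can move a point from the support of $1-\chi_\lambda$ into the interior of $J_\lambda$, where no elliptic decay is available and $|\varphi_\lambda(x+y)|$ is only controlled by the generic dispersive bound $\sim t^{-1/2}\lambda^{-3/4+O(\delta)}\|\varphi\|_X$, which exceeds the target $\lambda^{-1/2}t^{-1}\|\varphi\|_X$ as soon as $t\gg\lambda^{1/2}$. (The small-$y$ case has a milder version of the same issue when $\lambda^{-1}\sim t$.) The paper avoids all case splitting on $y$ by a different observation: $\dq^y$ is a Fourier multiplier, so $\dq^y\varphi_\lambda$ is still frequency localized in $I_\lambda$, and one may apply the pointwise dispersive estimate from \cite[Proposition 5.1]{ITpax} directly to $\dq^y\varphi$ rather than to $\varphi$. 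The vector field norm of $\dq^y\varphi_\lambda$ is then controlled, uniformly in $y$, via the commutator identity $L\dq^y f=\dq^y(Lf)-f(\cdot+y)$ together with $\|\dq^y g\|_{L^2}\le\|\partial_x g\|_{L^2}$, which give
\[
\|L\dq^y\varphi_\lambda\|_{L^2}+\lambda^{-1}\|\dq^y\varphi_\lambda\|_{L^2}\lesssim\|L\partial_x\varphi_\lambda\|_{L^2}+\|\varphi_\lambda\|_{L^2}\lesssim\|\varphi\|_X.
\]
This is the key idea missing from your proposal; your case analysis cannot be repaired to cover all $y$ without it.
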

\begin{proof}
We use the bounds
\begin{align*}
    |\partial_x(\chi_{\lambda}(x/t)|&\lesssim t^{-1}.
\end{align*}
From \ref{Pointwise Elliptic Estimate} applied for $\partial_x\varphi$,
\begin{align*}
    \|\partial_x((1-\chi_{\lambda})\varphi_\lambda)\|_{L_x^{\infty}}&\lesssim \frac{1}{t}\|\chi'_{\lambda}\varphi_\lambda\|_{L_x^{\infty}}+\|(1-\chi_{\lambda})\partial_x\varphi_\lambda\|_{L_x^{\infty}} \lesssim \frac{\lambda^{1/2}}{t}\|\varphi\|_X.
\end{align*}
The first two bounds immediately follow from \ref{Pointwise Elliptic Estimate}, and the $L^2$ elliptic estimate similarly follows from \cite[Proposition 5.1]{ITpax}.

For the bounds involving the difference quotient, from \ref{Pointwise Elliptic Estimate} applied for $\dq^y\varphi$, we have
\begin{align*}
\|(1-\chi_{\lambda})\dq^y\varphi_\lambda\|_{L_x^\infty}&\lesssim \frac{\lambda^{1/2}}{t}(\|L\dq^y\varphi_\lambda\|_{L_x^2}+\lambda^{-1}\|\dq^y\varphi_\lambda\|_{L_x^2})\\
&\lesssim \frac{\lambda^{1/2}}{t}(\|\dq^y(L\varphi_\lambda)\|_{L_x^2}+\|\varphi_\lambda(x+y)\|_{L_x^2}+\|\varphi_\lambda\|_{L_x^2})\\
&\lesssim \frac{\lambda^{1/2}}{t}(\|L\partial_x\varphi_\lambda\|_{L_x^2}+\|\varphi_\lambda\|_{L_x^2})\\
&\lesssim \frac{\lambda^{1/2}}{t}\|\varphi\|_X
\end{align*}
The other bound is proved similarly.
\end{proof}

\subsection{Wave packets}

We construct wave packets as follows. Given the dispersion relation $a(\xi)$, the group velocity $v$ satisfies
\[
v = a'(\xi) = -2-2\log|\xi|,
\]
so we denote
\[
\xi_v = -e^{-1-\frac{v}{2}}.
\]
Then we define the linear wave packet $\pax^v$ associated with velocity $v$ by
\[
\pax^v = a''(\xi_v)^{-\half} \chi(y) e^{it\phi(x/t)}, \qquad y = \frac{x - vt}{t^\half a''(\xi_v)^\half},
\]
where the phase $\phi$ is given by
\[
\phi(v) = v\xi_v - a(\xi_v),
\]
and $\chi$ is a unit bump function, such that $\int\chi(y)\,dy=1$.

We remark that we will typically apply frequency localization $\pax^v_\lambda = P_\lambda\mathbf{u}^v$ with $v \in J_\lambda$.

\

We observe that since
\[
\D_v (|\xi_v|^{\half} ) = -\frac14 |\xi_v|^{\half}, \qquad \D_v (a''(\xi_v)^{-\half} ) = -\frac14 a''(\xi_v)^{-\half},
\]
we may write
\begin{equation}\label{dpax}
\D_v\pax^v = - \tilde L \pax^v + \pax^{v,II} = t^{\half} a''(\xi_v)^{-\half} \pax^v + \pax^{v,II} 
\end{equation}
where
 \[ \tilde{L}=t(\partial_x-i\phi'(x/t))\]
and $\pax^{v,II}$ has a similar wave packet form. We also recall from \cite[Lemmas 4.4, 5.10]{ITpax} the sense in which $\pax^v$ is a good approximate solution:

\begin{lemma}\label{l:paxsoln}
The wave packet $\pax^v$ solves an equation of the form
\[
(i\D_t - A(D)) \pax^v = t^{-\frac32}(L \pax^{v, I} + \rpax^v)
\]
where $\pax^{v, I}, \rpax^v$ have wave packet form,
\[
\pax^{v, I} \approx a''(\xi_v)^{-\half} \pax^v, \qquad \rpax^v \approx \xi_v^{-1}a''(\xi_v)^{-\half} \pax^v.
\]
\end{lemma}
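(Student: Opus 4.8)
The plan is to verify directly that $\pax^v$ satisfies the stated equation by computing $(i\D_t - A(D))\pax^v$, exploiting that $\pax^v$ is by construction an exact solution of the \emph{transport} approximation to the linear flow, so that the error is purely the dispersive correction coming from the curvature $a''$. First I would recall the explicit form $\pax^v = a''(\xi_v)^{-\half}\chi(y)e^{it\phi(x/t)}$ with $y = (x-vt)/(t^\half a''(\xi_v)^\half)$, and note that differentiating $e^{it\phi(x/t)}$ in $t$ produces $i(\phi(x/t) - (x/t)\phi'(x/t))e^{it\phi(x/t)} = -i a(\xi_{x/t})$ times the exponential, using the Legendre-transform relation $\phi(v) = v\xi_v - a(\xi_v)$ and $\phi'(v) = \xi_v$. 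This is the mechanism by which the $e^{it\phi(x/t)}$ factor already "knows" the symbol $a$: on the wave packet, which is localized near the spatial point $x = vt$ and near frequency $\xi_v$, the multiplier $A(D)$ acts to leading order like multiplication by $a(\xi_{x/t})$, and this matches $i\D_t$ acting on the phase.

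The key steps, in order: (1) Compute $i\D_t \pax^v$, separating the contribution of the phase, the contribution of $\D_t\chi(y)$ (which, since $y$ depends on $t$ both through $vt$ and through $t^\half$, yields a term of the form $t^{-1}$ times $y\chi'(y)$ and $\chi'(y)$ combinations — these are lower order and have wave packet form), and the contribution of $\D_t(a''(\xi_v)^{-\half})$, which vanishes since $\xi_v$ is $t$-independent. (2) Expand $A(D)\pax^v$ using a stationary-phase / Taylor expansion of the symbol $a(\xi)$ around $\xi_v$: the zeroth order term cancels against the phase contribution from step (1), the first order term $a'(\xi_v)(D - \xi_v)$ combines with the transport part of $i\D_t$ (the $v\D_x$ piece hidden in $\D_t y$) and cancels because $v = a'(\xi_v)$ by definition of $\xi_v$, and the second order term $\half a''(\xi_v)(D-\xi_v)^2$ is the leading genuine error. (3) Recognize that $(D - \xi_v)$ acting on $\pax^v$ is, up to the slowly varying amplitude, comparable to $t^{-\half}a''(\xi_v)^{-\half}$ times a differentiation in $y$, so that $\half a''(\xi_v)(D-\xi_v)^2\pax^v$ has size $t^{-1}$ relative to $\pax^v$; rewriting $t^{-1} = t^{-\frac32}\cdot t^{\half}$ and using \eqref{dpax}, i.e. $t^{\half}a''(\xi_v)^{-\half}\pax^v = -\tilde L\pax^v + \pax^{v,II}$ together with the relation between $\tilde L$ and $L$, converts this into the claimed form $t^{-\frac32}(L\pax^{v,I} + \rpax^v)$ with $\pax^{v,I}\approx a''(\xi_v)^{-\half}\pax^v$. (4) Collect the remaining cubic-and-higher Taylor remainder terms in the symbol expansion and the $\D_t\chi$ corrections from step (1); each carries an extra power of $\xi_v^{-1}$ or of $t^{-\half}$ and can be absorbed into $\rpax^v \approx \xi_v^{-1}a''(\xi_v)^{-\half}\pax^v$, using that $a'''(\xi) = 2\xi^{-1}$ contributes the $\xi_v^{-1}$ gain. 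Throughout, I would invoke \cite[Lemmas 4.4, 5.10]{ITpax}, which establish exactly this type of expansion in the general framework, so the argument here is really a matter of checking that the present symbol $a(\xi) = -2\xi\log|\xi|$ satisfies the hypotheses of that framework (smoothness away from $\xi = 0$, the nondegeneracy $a''(\xi) = -2/\xi \neq 0$, and the structural bound $|a^{(k)}(\xi)|\lesssim |\xi|^{1-k}$ for $k \geq 2$) and recording the resulting amplitudes.

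The main obstacle I anticipate is bookkeeping the error terms at the correct order and in the correct structural form — specifically, showing that \emph{every} error term either is of the form $t^{-\frac32}L(\text{wave packet})$ or gains the factor $\xi_v^{-1}$ relative to $\pax^v$. The subtlety is that a naive expansion produces errors of the form $t^{-1}\times(\text{wave packet})$ with no apparent $L$ or $\xi_v^{-1}$ gain; the resolution is that such terms must be re-expressed using the algebraic identity \eqref{dpax} (which trades a factor $t^{\half}a''(\xi_v)^{-\half}$ for $-\tilde L$ plus a harmless secondary wave packet) and using the relation $\tilde L = L - (\text{bounded multiplier})$, so that $t^{-1} = t^{-\frac32}t^{\half}a''(\xi_v)^{\half}\cdot a''(\xi_v)^{-\half}$ and the $t^{\half}a''(\xi_v)^{-\half}$ gets converted to $L$. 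Getting the $\chi$-derivative terms into this mold, and checking that the $\xi_v$-dependence of $a''(\xi_v)$ does not spoil the slow variation when $\xi_v$ ranges over $J_\lambda$ (which is where frequency localization $\pax^v_\lambda$ is used), is the delicate part; everything else is a routine symbol expansion that the Ifrim-Tataru machinery handles in general.
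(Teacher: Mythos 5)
Your proposal is correct and takes essentially the same approach as the paper, which does not prove the lemma but simply recalls it from \cite[Lemmas 4.4, 5.10]{ITpax}; your additional symbol-expansion sketch of the mechanism is consistent with the framework being cited. One minor slip: for $a(\xi) = -2\xi\log|\xi|$ one has $a'''(\xi) = 2\xi^{-2}$ (not $2\xi^{-1}$), though the relevant ratio $a'''/a'' = -\xi^{-1}$ still supplies the $\xi_v^{-1}$ gain you invoke for $\rpax^v$.
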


\

The asymptotic profile at frequency $\lambda$ is meaningful when the associated spatial region $tJ_\lambda$ dominates the wave packet scale at frequency $\lambda$:
\[
\delta x \approx t^\half a''(\lambda)^\half \lesssim |tJ_\lambda| \approx t \lambda a''(\lambda).
\]
This corresponds to 
\[
t \gtrsim \lambda^{-2} a''(\lambda)^{-1} \approx \lambda^{-1}.
\]
Accordingly we define
\[
\mathcal D = \{(t, v) \in \R^+ \times \R : v \in J_\lambda, \ t \gtrsim \lambda^{-1} \}.
\]

\subsection{Wave packet testing}

In this section we establish estimates on the asymptotic profile function
\[
\gamma^\lambda(t, v) := \langle \varphi, \mathbf{u}^v_\lambda \rangle_{L^2_x}=\langle \varphi_\lambda, \mathbf{u}^v \rangle_{L^2_x}.
\]
We will see that $\gamma^\lambda$ essentially has support $v\in J_\lambda$.

\

We will also use the following crude bounds involving the higher regularity of $\gamma^\lambda$:
\begin{lemma}\label{Gamma bounds}
We have
    \begin{align*}
    \|\chi_\lambda \D_v^n \gamma^\lambda\|_{L^\infty}&\lesssim  t^{\half}(1+ t^\half\lambda^\half)^n\|\varphi_{\lambda}\|_{L_x^{\infty}}, \\
      \|\chi_\lambda \D_v^n\gamma^\lambda\|_{L^2}&\lesssim  (t\lambda)^{\frac14}(1+t^\half\lambda^\half)^n\|\varphi_{\lambda}\|_{L_x^{2}},
    \end{align*}
    and 
    \begin{align*}
\|\chi_\lambda\partial_v \gamma^\lambda\|_{L^\infty}&\lesssim t^{\frac14}\lambda^{-\frac34}\|\varphi\|_X + t^\half \|\varphi_\lambda\|_{L_x^{\infty}}.
    \end{align*}
    
\end{lemma}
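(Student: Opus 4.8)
The plan is to estimate $\gamma^\lambda(t,v) = \langle \varphi_\lambda, \pax^v\rangle_{L^2_x}$ and its $v$-derivatives directly, using the identity \eqref{dpax} that converts $v$-differentiation of the wave packet into the action of the operator $\tilde L = t(\partial_x - i\phi'(x/t))$, which is essentially $L$ localized at frequency $\lambda$. First I would record the zeroth-order bounds: by Cauchy-Schwarz, $|\gamma^\lambda| \lesssim \|\varphi_\lambda\|_{L^2}\|\pax^v\|_{L^2}$, and since $\pax^v = a''(\xi_v)^{-1/2}\chi(y)e^{it\phi(x/t)}$ with $y = (x-vt)/(t^{1/2}a''(\xi_v)^{1/2})$, a change of variables gives $\|\pax^v\|_{L^2}^2 \approx t^{1/2}a''(\xi_v)^{1/2} \cdot a''(\xi_v)^{-1} \approx (t\lambda)^{1/4}\cdot(\text{lower order})$ after using $a''(\xi_v) \approx \lambda^{-1}$ for $v\in J_\lambda$; similarly $\|\pax^v\|_{L^1} \approx t^{1/2}$, yielding the two $n=0$ bounds $|\gamma^\lambda| \lesssim t^{1/2}\|\varphi_\lambda\|_{L^\infty}$ (pairing $\varphi_\lambda \in L^\infty$ against $\pax^v \in L^1$) and $|\gamma^\lambda| \lesssim (t\lambda)^{1/4}\|\varphi_\lambda\|_{L^2}$.

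For the higher $v$-derivatives, I would iterate \eqref{dpax}: each $\D_v$ applied to $\pax^v$ produces a term $t^{1/2}a''(\xi_v)^{-1/2}\pax^v \approx t^{1/2}\lambda^{1/2}\pax^v$ plus a term $\pax^{v,II}$ of the same wave packet shape (hence obeying the same $L^1$, $L^2$ size bounds), plus the $\tilde L$ term. Moving $\tilde L$ onto $\varphi_\lambda$ via the pairing — using that $\tilde L^* = -t(\partial_x + i\phi'(x/t))$ and that on the frequency support of $\varphi_\lambda$, $\tilde L$ agrees with $L$ up to a bounded multiplier times $\partial_x$, so $\|\tilde L\varphi_\lambda\|_{L^2} \lesssim \|L\partial_x\varphi_\lambda\|_{L^2}/\lambda + \dots \lesssim \lambda^{-1}\|\varphi\|_X$ — shows each differentiation costs at most a factor $(1+t^{1/2}\lambda^{1/2})$. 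Inducting on $n$ and absorbing the $\chi_\lambda$ cutoff (which is harmless since $\chi_\lambda$ is a bounded multiplier in the relevant variable, with derivatives $O(t^{-1})$ that are lower order) gives the two multi-derivative bounds of the lemma.

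For the sharpened $n=1$ bound, the point is that one should not simply plug $n=1$ into the crude estimate (which would give $t^{1/2}(1+t^{1/2}\lambda^{1/2})\|\varphi_\lambda\|_{L^\infty}$), but instead keep the two terms of \eqref{dpax} separate: the $\tilde L\pax^v$ contribution, after moving $\tilde L$ onto $\varphi_\lambda$, is $\lesssim \|\tilde L \varphi_\lambda\|_{L^2}\|\pax^v\|_{L^2} \lesssim (t\lambda)^{1/4}\cdot\lambda^{-1}\|\varphi\|_X \lesssim t^{1/4}\lambda^{-3/4}\|\varphi\|_X$ (using $a''(\xi_v)\approx\lambda^{-1}$), while the $\pax^{v,II}$ contribution is bounded by pairing against $L^1$ to get $\lesssim t^{1/2}\|\varphi_\lambda\|_{L^\infty}$. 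Adding these gives exactly the claimed $\|\chi_\lambda\partial_v\gamma^\lambda\|_{L^\infty} \lesssim t^{1/4}\lambda^{-3/4}\|\varphi\|_X + t^{1/2}\|\varphi_\lambda\|_{L^\infty}$.

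The main obstacle I anticipate is bookkeeping the relationship between $\tilde L$ (which involves the exact phase $\phi'(x/t)$ adapted to the reference velocity) and the global operator $L = x + 2t + 2t\log|D|$ when testing against $\varphi_\lambda$: one must verify that on the frequency support of $\varphi_\lambda$ the phase correction $\phi'(x/t)$ matches $2+2\log|D|$ up to errors controlled by $\lambda^{-1}$ times lower-order quantities, so that $\|\tilde L\varphi_\lambda\|_{L^2}$ is genuinely controlled by $\|\varphi\|_X$ with the stated $\lambda$-weights. This is a standard but slightly delicate localization argument; the size estimates for $\pax^v$, $\pax^{v,II}$ in $L^1$ and $L^2$ are routine changes of variables once $a''(\xi_v) \approx \lambda^{-1}$ on $J_\lambda$ is noted, and everything else is Cauchy-Schwarz plus induction.
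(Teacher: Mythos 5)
Your proposal is correct and follows essentially the same route as the paper: Cauchy--Schwarz/H\"older against the $L^1$ and $L^2$ norms of the wave packet for the crude bounds, iterating \eqref{dpax} (using its ``second form'') to collect the factor $(1+t^{1/2}\lambda^{1/2})^n$, and then the ``first form'' $-\tilde L\pax^v + \pax^{v,II}$ for the sharpened $n=1$ estimate, pairing the $\tilde L$ term against $\varphi_\lambda$ and using $\|\tilde L\varphi_\lambda\|_{L^2}\lesssim\lambda^{-1}\|\varphi\|_X$. The only point you discuss at length that the paper compresses is the step $|\langle\varphi_\lambda,\tilde L\pax^v\rangle|\lesssim(t\lambda)^{1/4}\|\tilde L\varphi_\lambda\|_{L^2}$, which the paper simply imports as Lemma 2.3 of \cite{ITpax}; your sanity check on the relationship between $\tilde L$ and $L$ on the frequency support of $\varphi_\lambda$ is exactly the content of that cited lemma.
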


\begin{proof}
Using the second form of $ \D_v \pax^v$ in \eqref{dpax}, we have
\[
|\chi_\lambda \D_v \gamma^\lambda| = |\chi_\lambda \langle \varphi_\lambda, \D_v \pax^v \rangle| \lesssim t^{\half}(t^\half\lambda^\half + 1)\|\varphi_{\lambda}\|_{L_x^{\infty}}
\]
where the $t^\half$ loss in front arises from the $L^1$ norm of the wave packet. Higher derivatives are obtained similarly, along with the $L^2$ estimates.

For the last bound, we use the first form of $ \D_v \pax^v$ in \eqref{dpax}. The contribution from the wave packet $\pax^{v,II}$ is easily estimated as above. For the remaining bound, Lemma 2.3 from \cite{ITpax} implies that
    \begin{align*}
        |\langle \varphi_\lambda,\tilde L \pax^v\rangle| &\lesssim (t\lambda)^{\frac14}\|\tilde{L}\varphi_\lambda\|_{L_x^2}\lesssim t^{\frac14}\lambda^{-\frac34}\|\varphi_\lambda\|_X,
         \end{align*}
which finishes the proof.
\end{proof}

\subsubsection{Approximate profile}

We recall from \cite{ITpax} that $\gamma^\lambda$ provides a good approximation for the profile of $\varphi$. In our setting, we will also need to compare the profile with the differentiated flow $\D_x \varphi$. Define
\[
r^\lambda(t, x) = \chi_\lambda(x/t)\varphi_\lambda(t, x) - t^{-\half}\chi_\lambda(x/t)\gamma^\lambda(t, x/t)e^{-it\phi(x/t)}.
\]

\begin{lemma}\label{Error bounds 1}
Let $t \geq 1$. Then we have
\begin{equation*}
\begin{aligned}
\|\chi_\lambda(x/t) r^\lambda\|_{L^\infty_x} &\lesssim t^{-\frac34}\lambda^{-\frac14} \| \tilde L \varphi_\lambda\|_{L^2_x}, \\
\|\chi_\lambda(x/t) \D_v r^\lambda\|_{L^\infty_x} &\lesssim t^{\frac14}\lambda^{-\frac14} \| \tilde L \D_x\varphi_\lambda\|_{L^2_x} +  (1 + t^\half \lambda^\half)\|\varphi_\lambda \|_{L^\infty}.
\end{aligned}
\end{equation*}
\end{lemma}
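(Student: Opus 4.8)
The plan is to analyze the two quantities by Taylor expanding the spatial profile of the wave packet test against $\varphi_\lambda$ around the stationary phase point. For the first bound, this is exactly the content of the standard wave packet testing inequality recorded in \cite{ITpax} (the same inequality that gives $\gamma^\lambda$ as a good leading-order approximation to the profile of $\varphi$). Concretely, one writes
\[
r^\lambda = \chi_\lambda(x/t)\Big(\varphi_\lambda - t^{-\half}\langle \varphi_\lambda, \pax^{v}\rangle\big|_{v = x/t}\, e^{-it\phi(x/t)}\Big),
\]
recognizes that $t^{-\half}\pax^{v}(t,x)$, localized where $\chi_\lambda = 1$, is (up to the Gaussian envelope and normalizing constant) a reproducing kernel at scale $\delta x \approx t^\half a''(\lambda)^\half$ in $x$, and estimates the difference $\varphi_\lambda(x) - \varphi_\lambda(\text{nearby point})$ by a first-order Taylor remainder. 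The $\tilde L$ operator $\tilde L = t(\partial_x - i\phi'(x/t))$ precisely measures this oscillation-corrected spatial derivative of $\varphi_\lambda$ at the packet scale, and tracking the powers of $t$ and $\lambda$ (the $\lambda^{-\frac14}$ coming from $a''(\lambda)^\half \approx \lambda^{-1}$, i.e.\ from the wave packet width, and the $t^{-\frac34} = t^{-\half}\cdot t^{-\frac14}$ from the normalization together with the $L^2 \to L^\infty$ conversion over the packet) yields $t^{-\frac34}\lambda^{-\frac14}\|\tilde L \varphi_\lambda\|_{L^2_x}$. This is essentially a citation to \cite[Lemma 5.10]{ITpax}, adapted to our dispersion relation $a(\xi) = -2\xi\log|\xi|$.

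For the second bound I would differentiate the identity defining $r^\lambda$ in $v$ (after replacing $x/t$ by the independent variable $v$ in the profile term) and track where the $v$-derivative lands. When $\D_v$ hits the profile term $\gamma^\lambda(t, v)$ we get, via \eqref{dpax}, a term $\langle \varphi_\lambda, \tilde L \pax^v\rangle$ plus the benign $\pax^{v,II}$ contribution; combined with the analogous expansion of $\chi_\lambda(x/t)\varphi_\lambda$ (where the relevant object is now $\langle \D_x\varphi_\lambda, \pax^v\rangle$-type, since $\partial_x$ and $\tilde L$ are related by the phase), the leading error becomes a wave packet test of $\D_x \varphi_\lambda$ rather than $\varphi_\lambda$. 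Repeating the first-bound analysis with $\D_x\varphi_\lambda$ in place of $\varphi_\lambda$ gives the term $t^{\frac14}\lambda^{-\frac14}\|\tilde L \D_x\varphi_\lambda\|_{L^2_x}$ — note the power of $t$ increases by one relative to the first bound, consistent with the extra $\D_v$ costing a factor $t^\half \cdot t^\half = t$ by \eqref{dpax}. The remaining pieces — the $v$-derivative landing on the cutoff $\chi_\lambda$, on the phase $e^{-it\phi}$ inside the profile term, or producing the $\pax^{v,II}$ packet — are all controlled pointwise by $\|\varphi_\lambda\|_{L^\infty}$ with a prefactor at most $1 + t^\half\lambda^\half$, using the crude bounds of Lemma~\ref{Gamma bounds} and the elliptic bounds of Lemma~\ref{Elliptic bounds for the derivative} to handle the tails where $1-\chi_\lambda$ appears.

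The main obstacle I anticipate is bookkeeping rather than conceptual: one must carefully separate, in $\D_v r^\lambda$, the genuinely leading term (the wave packet test of the differentiated flow, which is allowed to carry the $\tilde L \D_x\varphi_\lambda$ norm) from the lower-order debris, and verify that every piece of debris is indeed dominated by $(1 + t^\half\lambda^\half)\|\varphi_\lambda\|_{L^\infty}$ uniformly in $v \in J_\lambda$ — in particular the terms supported where $\chi_\lambda(x/t) \neq 1$ but $\chi_\lambda \neq 0$, i.e.\ the transition region of the partition of unity, where one pays a $t^{-1}$ from $\chi_\lambda'$ but must still absorb the wave packet normalization. Getting the exact exponents to match the statement requires being scrupulous about the three sources of $t$- and $\lambda$-weights: the $t^{-\half}$ normalization in the definition of $r^\lambda$, the $t^\half a''(\xi_v)^{\pm\half} \approx t^\half\lambda^{\pm\half}$ factors from $\D_v$ acting on the packet via \eqref{dpax}, and the $(t\lambda)^{\pm\frac14}$ factors from converting between $L^2_x$ and $L^\infty_x$ at the packet scale as in Lemma~\ref{Gamma bounds}.
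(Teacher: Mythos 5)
Your proposal matches the paper's argument: the first bound is a direct citation to \cite{ITpax} (the paper points to Proposition~4.7 rather than Lemma~5.10, but the content is the same reproducing-kernel estimate), and the second is obtained exactly as you describe, by differentiating in $v$, using the first form of \eqref{dpax} on the profile term to isolate the leading piece $t\langle\D_x\varphi_\lambda,\pax^v\rangle$ (which reduces to the first bound with $\D_x\varphi_\lambda$ in place of $\varphi_\lambda$, at the cost of one power of $t$), and then bounding the residual $\pax^{v,II}$ and $\phi'(\cdot/t)-\phi'(v)$ pieces pointwise by $(1+t^\half\lambda^\half)\|\varphi_\lambda\|_{L^\infty}$.
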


\begin{proof}
The first estimate may be obtained from the proof of \cite[Proposition 4.7]{ITpax}. For the latter, we use the first representation in \eqref{dpax} to write
\begin{equation}\label{rexp}
e^{it\phi(v)} \D_v(\gamma(t, v) e^{-it(\phi(v)}) = t\langle \D_x\varphi_\lambda, \pax^v \rangle + \langle \varphi_\lambda, it(\phi'(\cdot/t) - \phi'(v)) \pax^v \rangle + \langle \varphi_\lambda, \pax^{v, II} \rangle.
\end{equation}
To address the first term, we see that we may apply the undifferentiated estimate with $\D_x \varphi_\lambda$ in place of $\varphi_\lambda$. Precisely, we may apply the first estimate on
\[
\D_x\varphi_\lambda(t, x) - t^{-\half} \langle \D_x\varphi_\lambda, \pax^{x/t} \rangle e^{-it\phi(x/t)}.
\]
We estimate the third term of \eqref{rexp} via
\begin{equation*}
t^{-\half} |\langle \varphi_\lambda, \pax^{v, II} \rangle| \lesssim \|\varphi_\lambda \|_{L^\infty}.
\end{equation*}
It remains to estimate the middle term,
\[
 t^{-\half}|\chi_\lambda(v)\langle \varphi_\lambda, it(\phi'(\cdot/t) - \phi'(v)) \pax^v \rangle | \lesssim |\phi''(\lambda)| \cdot t^\half a''(\lambda)^\half \cdot \|\varphi_\lambda \|_{L^\infty} \lesssim t^\half \lambda^{\half} \|\varphi_\lambda \|_{L^\infty}
\]
\end{proof}

We also observe that on the wave packet scale, we may replace $\gamma(t, v)$ with $\gamma(t, x/t)$ up to acceptable errors. Denote
\[
\beta^\lambda_v(t, x) = t^{-1/2}\chi_\lambda(x/t)(\gamma(t, v) - \gamma(t, x/t))e^{it\phi(x/t)},
\]

\begin{lemma}\label{Beta bounds} 
Let $v\in J_\lambda$, and $(t,v)\in\mathcal{D}$. Then, for every $y\neq 0$ and $x$ such that $\displaystyle|x-vt|\lesssim\delta x=t^{1/2}\lambda^{-1/2}$, we have the bound 
\begin{align*}
|\dq^y\beta_v|&\lesssim t^{-3/4}\lambda^{-1/4}\|\varphi\|_X
\end{align*}
\end{lemma}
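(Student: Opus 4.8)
The plan is to reduce the difference quotient $\dq^y \beta_v$ to a difference quotient of $\gamma(t, \cdot/t)$ and then exploit the regularity of $\gamma^\lambda$ recorded in Lemma~\ref{Gamma bounds} (specifically the $\D_v \gamma^\lambda$ bounds). First I would write out
\[
\dq^y\beta_v^\lambda(t,x) = \frac{\beta_v^\lambda(t,x+y) - \beta_v^\lambda(t,x)}{y},
\]
and expand using the definition of $\beta_v^\lambda$. The expression splits into a piece where the difference falls on the cutoff $\chi_\lambda(\cdot/t)$ times the phase, and a piece where it falls on $\gamma(t,v) - \gamma(t,\cdot/t)$. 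Since $\chi_\lambda(x/t)$ has $\D_x$-size $t^{-1}$ and is supported where $|x/t - v| \lesssim |J_\lambda| \approx \lambda a''(\lambda) \approx 1$, and since we restrict to $|x - vt| \lesssim \delta x = t^{1/2}\lambda^{-1/2}$, the relevant range of $x$ is narrow; the factor $\gamma(t,v) - \gamma(t,x/t)$ vanishes to first order at $x = vt$, so on this scale it is $\lesssim \delta x \cdot t^{-1}\|\chi_\lambda \D_v\gamma^\lambda\|_{L^\infty}$.

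The key quantitative input is the third bound of Lemma~\ref{Gamma bounds},
\[
\|\chi_\lambda \D_v\gamma^\lambda\|_{L^\infty} \lesssim t^{1/4}\lambda^{-3/4}\|\varphi\|_X + t^{1/2}\|\varphi_\lambda\|_{L^\infty_x},
\]
combined with the vector field pointwise bound on $\varphi_\lambda$: from \eqref{Pointwise Vector Field Bound 2} (or directly the dyadic estimates preceding it) one has $\|\varphi_\lambda\|_{L^\infty_x} \lesssim t^{-1/2}\lambda^{-(3/4-\delta)}\|\varphi\|_X$ for $\lambda \leq 1$ and an even better power for $\lambda > 1$, so in all cases $t^{1/2}\|\varphi_\lambda\|_{L^\infty_x} \lesssim \lambda^{-3/4}\|\varphi\|_X$, which is dominated by the first term whenever $t \gtrsim 1$. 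Hence $\|\chi_\lambda\D_v\gamma^\lambda\|_{L^\infty} \lesssim t^{1/4}\lambda^{-3/4}\|\varphi\|_X$ on $\mathcal D$. Feeding this into the bound for the second piece gives a contribution $\lesssim t^{-1} \cdot \delta x \cdot t^{1/4}\lambda^{-3/4}\|\varphi\|_X / (\text{normalization}) \approx t^{-1}\cdot t^{1/2}\lambda^{-1/2}\cdot t^{1/4}\lambda^{-3/4}\|\varphi\|_X$, and after accounting for the $t^{-1/2}$ prefactor in the definition of $\beta_v^\lambda$ this collapses to $t^{-3/4}\lambda^{-1/4}\|\varphi\|_X$, as desired — though one must also handle the case $|y|$ large, where the difference quotient is controlled simply by $|y|^{-1}$ times the sup of $|\beta_v^\lambda|$, using the first bound of Lemma~\ref{Gamma bounds} for $\|\chi_\lambda\gamma^\lambda\|_{L^\infty}$, and checking this is consistent.

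The piece where the difference hits the cutoff-times-phase factor, i.e. terms like $t^{-1/2}\dq^y(\chi_\lambda(\cdot/t) e^{it\phi(\cdot/t)})(x)\cdot(\gamma(t,v) - \gamma(t,(x+y)/t))$, needs slightly more care because the phase $e^{it\phi(x/t)}$ oscillates: its $\D_x$-derivative is $i\phi'(x/t)$, which is $O(1)$, not small. But it is multiplied by $\gamma(t,v)-\gamma(t,x/t)$, which again vanishes at $x = vt$ and is $\lesssim \delta x \cdot t^{-1}\|\chi_\lambda\D_v\gamma^\lambda\|_{L^\infty}$ on the relevant range, so the same estimate goes through; for large $|y|$ one again just uses $|y|^{-1}$ times a crude sup bound. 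I expect the main obstacle to be bookkeeping the normalization constants and dyadic weights correctly across the regimes $\lambda \le 1$ versus $\lambda > 1$, and ensuring the large-$|y|$ regime does not degrade the estimate — but these are routine once the vector field bounds on $\varphi_\lambda$ and the Lemma~\ref{Gamma bounds} estimates are in hand; there is no genuinely hard analytic step beyond what is already established.
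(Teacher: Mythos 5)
Your overall strategy coincides with the paper's: write $\dq^y\beta_v$ via the Leibniz rule for difference quotients, use the mean value theorem on $\gamma(t,\cdot/t)$, feed in the third estimate of Lemma~\ref{Gamma bounds}, and absorb the $t^{1/2}\|\varphi_\lambda\|_{L^\infty}$ term using the vector field pointwise bound. So the architecture is right. However, two linked quantitative errors leave the argument short of the stated bound.

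First, the displayed arithmetic for the piece carrying the factor $\gamma(t,v)-\gamma(t,\cdot/t)$ does not close. You write
\[
t^{-1}\cdot\delta x\cdot t^{1/4}\lambda^{-3/4}\|\varphi\|_X
= t^{-1}\cdot t^{1/2}\lambda^{-1/2}\cdot t^{1/4}\lambda^{-3/4}\|\varphi\|_X
= t^{-1/4}\lambda^{-5/4}\|\varphi\|_X,
\]
and after the $t^{-1/2}$ prefactor this is $t^{-3/4}\lambda^{-5/4}\|\varphi\|_X$, not the claimed $t^{-3/4}\lambda^{-1/4}\|\varphi\|_X$. For $\lambda<1$ the excess $\lambda^{-1}$ is not controlled by $(t,v)\in\mathcal D$, so this is a genuine shortfall rather than a harmless constant. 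Second, and this is the source of the missing power, you assert that $\D_x e^{it\phi(x/t)} = i\phi'(x/t)e^{it\phi(x/t)}$ has $\phi'(x/t)=O(1)$. Since $\phi'(v)=\xi_v$, and for $v\in J_\lambda$ one has $|\xi_v|\approx\lambda$, the phase derivative is $O(\lambda)$, not $O(1)$. The Lipschitz constant of $\chi_\lambda(\cdot/t)e^{it\phi(\cdot/t)}$ is therefore $\lesssim t^{-1}+\lambda$, and it is precisely this $\lambda$ that restores the missing power: combining $|\gamma(t,v)-\gamma(t,x/t)|\lesssim t^{-1/2}\lambda^{-1/2}\|\D_v\gamma\|_{L^\infty}$, the $\lambda$ from the phase, the $t^{-1/2}$ prefactor, and $\|\D_v\gamma\|_{L^\infty}\lesssim t^{1/4}\lambda^{-3/4}\|\varphi\|_X$ gives $t^{-1}\lambda^{1/2}\cdot t^{1/4}\lambda^{-3/4}\|\varphi\|_X=t^{-3/4}\lambda^{-1/4}\|\varphi\|_X$, which is the bound. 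For the other Leibniz piece, where the difference falls on $\gamma(t,\cdot/t)$ itself, the right input is $|\dq^y\gamma(t,\cdot/t)|\lesssim t^{-1}\|\D_v\gamma\|_{L^\infty}$ with no $\delta x$ factor (and no phase gain); this yields $t^{-5/4}\lambda^{-3/4}\|\varphi\|_X$, which is dominated by the target on $\mathcal D$ since $(t\lambda)^{-1/2}\lesssim 1$. Your write-up blurs these two pieces together, using the $\delta x$ vanishing bound in a context where the differenced factor should instead carry the $t^{-1}$ from the chain rule; disentangling them and correcting $\phi'=O(\lambda)$ completes the proof.

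Finally, the separate treatment of ``$|y|$ large'' is unnecessary: the Lipschitz/MVT bounds for both factors hold uniformly in $y$, which is how the paper handles it, so the case split adds nothing.
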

\begin{proof}
We have
\[
\delta^y \beta_v = -t^{-1/2}\delta^y(\gamma(t, \cdot/t)) \chi_\lambda((x+y)/t)e^{it\phi((x + y)/t)} + t^{-1/2}(\gamma(t, v) - \gamma(t, x/t)) \delta^y (\chi_\lambda(\cdot/t)e^{it\phi(\cdot/t)}).
\]
The Mean Value Theorem ensures that
\[
|\delta^y(\gamma(t, \cdot/t))| \lesssim t^{-1} \|\D_v \gamma\|_{L^\infty},
\]
and that
\begin{align*}
|\dq^y\beta_v|&\lesssim t^{-1/2}(t^{-1}\|\partial_v\gamma\|_{L^\infty}+t^{-1/2}\lambda^{-1/2}\|\partial_v\gamma\|_{L^\infty}(t^{-1}+\lambda))\\
&\lesssim t^{-1}\|\partial_v\gamma\|_{L^\infty}(t^{-1/2}+\lambda^{1/2})\lesssim t^{-1}\lambda^{1/2}(\lambda^{-3/4}t^{1/4}\|\varphi\|_X +\|\varphi_\lambda\|_{L_x^{\infty}}t^{1/2})\\
&\lesssim t^{-3/4}\lambda^{-1/4}\|\varphi\|_X+t^{-1/2}\lambda^{1/2}\|\varphi_\lambda\|_{L_x^\infty}\lesssim t^{-3/4}\lambda^{-1/4}\|\varphi\|_X+t^{-1}\lambda^{-1/4}\|\varphi\|_X\\
&\lesssim t^{-3/4}\lambda^{-1/4}\|\varphi\|_X
\end{align*}
\end{proof}

\subsection{Bounds for $Q$}

Write, slightly abusing notation, 
\[
Q(\varphi) = Q(\varphi, \overline{\varphi}, \varphi) := \frac13 \int \sgn(y) \cdot |\dq^y\varphi|^2\dq^y\varphi\, dy.
\]

\begin{lemma}\label{Cubic difference bound}
For $\displaystyle 0<\delta\ll 1$, we have the difference estimates 
\begin{equation*}\begin{aligned}
\|Q(\varphi_1) - Q(\varphi_2)\|_{L_x^\infty + L_x^{1/\delta}} &\lesssim (\|\D_x (\varphi_1, \varphi_2)\|_{L_x^\infty}^2 +\|\D_x(\varphi_1,\varphi_2)\|
_{L_x^{\frac{1}{2\delta}}}\|(\varphi_1,\varphi_2)\|_{L_x^\infty}) \|\partial_x(\varphi_1-\varphi_2)\|_{L_x^{\infty}},
\end{aligned}\end{equation*}
\begin{equation*}\begin{aligned}
\|Q(\varphi_1) - Q(\varphi_2)\|_{L_x^2}&\lesssim \|\D_x (\varphi_1,\varphi_2)\|_{L_x^2}\|\D_x(\varphi_1,\varphi_2)\|_{L_x^\infty}\|\partial_x(\varphi_1-\varphi_2)\|_{L_x^\infty}\\
&+\||D_x|^{1-\delta}(\varphi_1,\varphi_2)\|_{L_x^2}\|(\varphi_1,\varphi_2)\|_{L_x^\infty}\|\partial_x(\varphi_1-\varphi_2)\|_{L_x^\infty} .
\end{aligned}\end{equation*}
\end{lemma}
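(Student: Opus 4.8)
The plan is to reduce both difference estimates to the trilinear bound of Lemma~\ref{Trilinear integral estimate} via a telescoping identity. Write $v = \varphi_1 - \varphi_2$. Since $\varphi_1$ and $\varphi_2$ are real valued, the algebraic factorization
\[
(\dq^y\varphi_1)^3 - (\dq^y\varphi_2)^3 = \dq^y v \cdot \big( (\dq^y\varphi_1)^2 + \dq^y\varphi_1\,\dq^y\varphi_2 + (\dq^y\varphi_2)^2 \big)
\]
yields
\[
Q(\varphi_1) - Q(\varphi_2) = \frac13 \int \sgn(y)\, \dq^y v \cdot \big( (\dq^y\varphi_1)^2 + \dq^y\varphi_1\,\dq^y\varphi_2 + (\dq^y\varphi_2)^2 \big)\, dy,
\]
which is a sum of finitely many trilinear integrals of the form $\int \sgn(y)\, \dq^y v\, \dq^y\psi_1\, \dq^y\psi_2\, dy$ with $\psi_1,\psi_2 \in \{\varphi_1,\varphi_2\}$. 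It therefore suffices to bound each such integral, and throughout I would reserve for the difference factor the elementary bound $\|\dq^y v\|_{L^\infty_x} \le \|\partial_x v\|_{L^\infty_x}$, so that the full $v$-dependence is captured by the factor $\|\partial_x(\varphi_1-\varphi_2)\|_{L^\infty_x}$ appearing in the statement.

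For each trilinear integral I would split $\int = \int_{|y|\le 1} + \int_{|y|>1}$, exactly as in the proof of Lemma~\ref{Trilinear integral estimate}, and estimate by Hölder in $x$ at fixed $y$ (with exponents $p_i$ satisfying $\sum_i 1/p_i = 1/r$) followed by integration in $y$. For the difference quotients of the $\psi_i$ I would use the two bounds $\|\dq^y\psi\|_{L^p_x} \le \|\partial_x\psi\|_{L^p_x}$ and $\|\dq^y\psi\|_{L^p_x} \le 2|y|^{-1}\|\psi\|_{L^p_x}$, together with the Plancherel estimate $\|\dq^y\psi\|_{L^2_x} \lesssim |y|^{-\delta}\,\||D_x|^{1-\delta}\psi\|_{L^2_x}$ (which follows from $|e^{iy\xi}-1|\lesssim|y\xi|^{1-\delta}$) when a fractional derivative is needed. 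On $\{|y|\le 1\}$, using $\|\partial_x\psi\|_{L^p_x}$ on every factor makes $\int_{|y|\le1}\,dy$ trivially finite and produces the terms carrying three $x$-derivatives; for the $L^2$ estimate one instead places one $\psi$ factor in $\||D_x|^{1-\delta}\psi\|_{L^2_x}$, which still leaves $\int_{|y|\le1}|y|^{-\delta}\,dy<\infty$. On $\{|y|>1\}$, applying the decay bound $2|y|^{-1}\|\psi\|_{L^p_x}$ to two of the three factors gains a factor $|y|^{-2}$ which renders the $y$-integral convergent, and this is the mechanism producing the terms in which some $\varphi_i$ appears without derivatives, i.e. as $\|(\varphi_1,\varphi_2)\|_{L^\infty_x}$. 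Matching the Lebesgue exponents of the three factors (one of which is $\infty$, coming from $\partial_x v$) against the target output space — $L^\infty_x + L^{1/\delta}_x$ in the first estimate and $L^2_x$ in the second — then dictates the remaining choices; the exponents $1/(2\delta)$ and $|D_x|^{1-\delta}$ are exactly what is needed to keep the two borderline $y$-integrals (as $y\to 0$ and as $y\to\infty$) convergent.

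I expect the only real work to be this exponent bookkeeping: because each difference quotient supplies only $|y|^{-1}$ decay as $|y|\to\infty$ and costs a full derivative as $|y|\to 0$, one must choose, for each of the finitely many trilinear terms, which factor carries the fractional derivative and which pair of Lebesgue exponents is combined in Hölder's inequality so that the resulting integral in $y$ converges with precisely the norms claimed; I do not anticipate any conceptual obstacle beyond this. The $L^2_x$ difference estimate follows by running the same argument with $r=2$, placing one factor in $L^2_x$ throughout so that $\sum_i 1/p_i = 1/2$, the two displayed terms being the respective contributions of $\{|y|\le1\}$ and $\{|y|>1\}$.
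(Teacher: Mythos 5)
Your overall strategy matches the paper's: telescope the cubic difference into trilinear integrals each carrying one factor of $v = \varphi_1 - \varphi_2$, then split $\{|y|\le 1\}$ from $\{|y|>1\}$ and estimate by H\"older with the elementary difference-quotient bounds. However, there is a genuine gap at the very first step. The operator is defined as $Q(\varphi) = Q(\varphi,\overline\varphi,\varphi) = \tfrac13\int\sgn(y)\,|\dq^y\varphi|^2\,\dq^y\varphi\,dy$, involving the complex modulus, and the lemma is later applied with \emph{complex-valued} arguments such as $\psi = t^{-1/2}\chi_\lambda(x/t)\gamma(t,x/t)e^{it\phi(x/t)}$ and $\theta$. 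Your assumption that $\varphi_1,\varphi_2$ are real, and the resulting factorization $(\dq^y\varphi_1)^3 - (\dq^y\varphi_2)^3 = \dq^y v\,((\dq^y\varphi_1)^2 + \dq^y\varphi_1\dq^y\varphi_2 + (\dq^y\varphi_2)^2)$, therefore do not apply. The paper uses the complex telescoping $|a|^2a - |b|^2b = (a-b)(|a|^2 + |b|^2) + (\bar a - \bar b)\,a\,b$, which still isolates one $v$- or $\bar v$-factor per term, so the remainder of your argument survives once this identity is substituted.

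A secondary issue is that the factor allocation you sketch does not produce the bounds actually stated. For $\{|y|>1\}$ you propose putting the crude $|y|^{-1}\|\psi\|_{L^p}$ bound on two of the three factors; with $v$ always in $\|\partial_x v\|_{L^\infty}$, this yields something like $\|\varphi\|_{L^{1/\delta}}\|\varphi\|_{L^\infty}\|\partial_x v\|_{L^\infty}$, which is neither the stated $\|\partial_x\varphi\|_{L^{1/(2\delta)}}\|\varphi\|_{L^\infty}\|\partial_x v\|_{L^\infty}$ nor comparable to it (in one dimension $\|\varphi\|_{L^{1/\delta}}\not\lesssim\|\partial_x\varphi\|_{L^{1/(2\delta)}}$). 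The paper instead takes $1-\delta$ fractional derivatives on one $\psi$-factor (gaining $|y|^{-\delta}$ decay), no derivatives on the other, and the full derivative on $v$, for total decay $|y|^{-1-\delta}$ — marginally convergent, but matching the stated norms after Sobolev embedding. Relatedly, you place the $\||D_x|^{1-\delta}\psi\|_{L^2}$ factor in the $\{|y|\le 1\}$ region for the $L^2$ estimate, whereas the paper uses full derivatives there and puts $|D_x|^{1-\delta}$ in $\{|y|>1\}$. You acknowledge the exponent bookkeeping as routine, but as laid out it would prove a different inequality; the allocation needs to be reworked to recover the lemma as stated.
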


\begin{proof}
Write
\[
Q(\varphi_1) - Q(\varphi_2) = \int_{|y|\leq 1} + \int_{|y|> 1}
\]
where the integrand may be written
\begin{align*}
\sgn(y)(|\dq^y\varphi_1|^2\dq^y\overline{\varphi_1}- &|\dq^y\varphi_2|^2\dq^y\overline{\varphi_2}) \\
&= \sgn(y)(\dq^y(\varphi_1-\varphi_2)(|\dq^y\varphi_1|^2+|\dq^y\varphi_2|^2)+\dq^y(\overline{\varphi_1}-\overline{\varphi_2})\dq^y\varphi_1\dq^y\varphi_2).
\end{align*}
The first integral contributes to the two estimates respectively,
\begin{equation*}\begin{aligned}
\bigg| \int_{|y|\leq 1} \bigg| \lesssim \|\D_x(\varphi_1,\varphi_2)\|_{L_x^\infty}^2\|\partial_x(\varphi_1-\varphi_2)\|_{L_x^{\infty}}
\end{aligned}\end{equation*}
and
\[
\bigg\| \int_{|y|\leq 1}\bigg\|_{L^2} \lesssim \|\D_x(\varphi_1,\varphi_2)\|_{L_x^2}\|\D_x(\varphi_1,\varphi_2)\|_{L_x^\infty} \|\partial_x(\varphi_1-\varphi_2)\|_{L_x^{\infty}}.
\]

For the second, using Sobolev embedding,
\begin{equation*}\begin{aligned}
\bigg\| \int_{|y|> 1}\bigg\|_{L^{1/\delta}} &\lesssim \||D_x|^{1-\delta}(\varphi_1,\varphi_2)\|_{L_x^{1/\delta}}\|(\varphi_1,\varphi_2)\|_{L_x^\infty}\|\partial_x(\varphi_1-\varphi_2)\|_{L_x^{\infty}}\\
&\lesssim \|\partial_x(\varphi_1,\varphi_2)\|_{L_x^{1/(2\delta)}}\|(\varphi_1,\varphi_2)\|_{L_x^\infty}\|\partial_x(\varphi_1-\varphi_2)\|_{L_x^{\infty}}
\end{aligned}\end{equation*}
and 
\begin{equation*}\begin{aligned}
\bigg\| \int_{|y|> 1}\bigg\|_{L^{2}} &\lesssim\||D_x|^{1-\delta}(\varphi_1,\varphi_2)\|_{L_x^{2}}\|(\varphi_1,\varphi_2)\|_{L_x^\infty}  \|\partial_x(\varphi_1-\varphi_2)\|_{L_x^{\infty}}.
\end{aligned}\end{equation*}
\end{proof}

We will be considering separately the balanced and unbalanced components of $Q$. Precisely, we denote the diagonal set of frequencies by $\mathcal D$ and write
\begin{equation*}
\begin{aligned}
Q(\varphi, \varphi, \varphi) &= \sum_{(\lambda_1, \lambda_2, \lambda_3, \lambda) \in \mathcal D} Q(\varphi_{\lambda_1}, \varphi_{\lambda_2}, \varphi_{\lambda_3}) + \sum_{(\lambda_1, \lambda_2, \lambda_3, \lambda) \notin \mathcal D} Q(\varphi_{\lambda_1}, \varphi_{\lambda_2}, \varphi_{\lambda_3}) \\
&= Q^{bal}(\varphi, \varphi, \varphi) + Q^{unbal}(\varphi, \varphi, \varphi) = Q^{bal}(\varphi) + Q^{unbal}(\varphi).
\end{aligned}
\end{equation*}

\

The unbalanced portion of $Q$ satisfies the better bound as follows: 

\begin{lemma}\label{Unbalanced cubic estimate}
$Q^{unbal}$ satisfies the bounds
\[
\|\chi_\lambda^1\partial_xP_\lambda Q^{unbal}(\varphi)\|_{L_x^\infty} \lesssim \lambda^{-1/4}\frac{\|\varphi\|^3_X}{t^2}
\]
and
\[
\|\chi_\lambda^1\partial_xP_\lambda Q^{unbal}(\varphi)\|_{L_x^2} \lesssim \lambda^{-1/4}\frac{\|\varphi\|^3_X}{t^{3/2}},
\]
where $\chi^1_{\lambda}$ is a cut-off widening $\chi_{\lambda}$.
\end{lemma}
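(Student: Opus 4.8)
The plan is to exploit the fact that, in the unbalanced regime, at least one of the input frequencies $\lambda_i$ is separated from the output frequency $\lambda$, so that the corresponding factor $\varphi_{\lambda_i}$ must be evaluated far from the spatial cone $tJ_\lambda$ where $\chi^1_\lambda$ is supported — precisely the situation addressed by the elliptic (off-diagonal) bounds of Lemma~\ref{Elliptic bounds for the derivative}. First I would decompose $Q^{unbal}(\varphi) = \sum_{(\lambda_1,\lambda_2,\lambda_3,\lambda)\notin\mathcal D} Q(\varphi_{\lambda_1},\varphi_{\lambda_2},\varphi_{\lambda_3})$ and for a fixed output frequency $\lambda$ split the sum according to which input is the largest; by symmetry of $Q$ in its three slots (up to conjugation) it suffices to treat the case $\lambda_1 \gtrsim \lambda$ with $\lambda_1 \gg \lambda$, or the case where the largest input frequency is comparable to but the triple is still off-diagonal because some $\lambda_i \ll \lambda$. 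In all off-diagonal cases there is at least one factor, say $\varphi_{\lambda_1}$, whose frequency support forces $x/t \notin J_{\lambda_1}$ on the support of $\chi^1_\lambda(x/t)$, so we may insert $(1-\chi_{\lambda_1})$ in front of that factor at the cost of a rapidly decaying tail.

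Next I would apply $\partial_x P_\lambda$ and move it onto the factors using the difference-quotient structure of $Q$: schematically $\partial_x P_\lambda Q(\varphi_{\lambda_1},\varphi_{\lambda_2},\varphi_{\lambda_3})$ is a superposition over $y$ of products $\sgn(y)\,\dq^y\varphi_{\lambda_1}\,\dq^y\varphi_{\lambda_2}\,\dq^y\varphi_{\lambda_3}$ with one extra $\partial_x$, and the frequency localizations let us turn one $\dq^y$ into a genuine derivative (picking up $\lambda$) and control the $y$-integration by Lemma~\ref{Trilinear integral estimate}, splitting $|y|\le 1$ and $|y|>1$ exactly as in the proof of Lemma~\ref{Cubic difference bound}. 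For the off-cone factor we use the difference-quotient elliptic bounds from Lemma~\ref{Elliptic bounds for the derivative}, namely
\[
\|(1-\chi_{\lambda_1})\dq^y\varphi_{\lambda_1}\|_{L_x^\infty} \lesssim \frac{\lambda_1^{1/2}}{t}\|\varphi\|_X, \qquad \|(1-\chi_{\lambda_1})\dq^y\varphi_{\lambda_1}\|_{L_x^2} \lesssim \frac{\|\varphi\|_X}{t},
\]
and for the remaining two factors we use the vector field pointwise bounds \eqref{Pointwise Vector Field Bound 2} together with the cone-localized estimates \eqref{Pointwise Elliptic Estimate}--\eqref{L2 Elliptic Estimate} and Lemma~\ref{Elliptic bounds for the derivative}. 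Each factor contributes a $t^{-1/2}$ (from $\|\varphi_\mu\|_{L^\infty_x}\lesssim t^{-1/2}\mu^{-\cdots}\|\varphi\|_X$) or a $t^{-1}$ (from the off-cone factor), and since exactly one factor is off-cone we obtain $t^{-1/2}\cdot t^{-1/2}\cdot t^{-1} = t^{-2}$ in $L^\infty_x$, while replacing one $L^\infty$ factor by its $L^2$ analogue (costing $t^{-1}\mapsto t^{-1/2}$, absorbing the missing decay into an $L^2$ norm of the off-cone factor) yields $t^{-3/2}$ in $L^2_x$.

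The last point is the dyadic summation in $\lambda_1,\lambda_2,\lambda_3$: the powers of $\lambda_i$ coming from the elliptic and vector field bounds must be arranged so that the geometric series in the separated frequencies converges and leaves precisely the asserted $\lambda^{-1/4}$ weight at the output. Here I would exploit that $\|\varphi\|_Y$ controls $\||D_x|^{3/4-\delta}\varphi\|_{L^\infty_x}$ and $\||D_x|^{2+\delta}\varphi\|_{L^\infty_x}$ — equivalently, via \eqref{Pointwise Vector Field Bound 2}, that the frequency-localized $\|\varphi_\mu\|_{L^\infty_x}$ carries a summable power $\mu^{-(3/4-\delta)}$ for $\mu\le 1$ and $\mu^{-(2+\delta)}$ for $\mu>1$ — so the sum over the large separated frequency converges with room to spare, and the sum over any small separated frequency converges because $F$'s quadratic vanishing has already been used to gain $|y|$-decay in Lemma~\ref{Trilinear integral estimate}; the output weight $\lambda^{-1/4}$ is then read off from the slowest-decaying configuration (a single output-frequency factor carrying $\lambda^{1/2}/t$ against two factors carrying $\lambda^{-3/4}/\sqrt t$, in the regime $\lambda\le 1$). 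The main obstacle I anticipate is precisely this bookkeeping: one must check that in \emph{every} off-diagonal configuration — largest input above, below, or comparable to $\lambda$ with a small third frequency — the product of $\lambda_i$-powers is summable and never produces a worse output weight than $\lambda^{-1/4}$; this requires being slightly careful about which factor is placed in $L^2$ versus $L^\infty$ and about using the low-frequency ($\lambda\le 1$) versus high-frequency ($\lambda>1$) components of $\|\varphi\|_Y$ as appropriate, but no new analytic input beyond Lemmas~\ref{Trilinear integral estimate} and \ref{Elliptic bounds for the derivative} is needed.
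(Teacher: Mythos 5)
Your plan is essentially the paper's proof: decompose $Q^{unbal}$ by which input frequencies are separated from $\lambda$, exploit that $\chi^1_\lambda$ forces the separated factor(s) into the elliptic region so the off-cone difference-quotient bounds of Lemma~\ref{Elliptic bounds for the derivative} apply, pair these with the vector-field pointwise bounds \eqref{Pointwise Vector Field Bound 2}--\eqref{L2 Elliptic Estimate} on the remaining factors, run the $y$-integration through Lemma~\ref{Trilinear integral estimate}, handle the nonlocality of $\partial_x P_\lambda$ via an enlarged cutoff $\tilde\chi_\lambda$ with a rapidly decaying tail, and then sum dyadically. One small inaccuracy: in the unbalanced configurations there are generically \emph{at least} two off-cone factors (e.g.\ the paper's Case~1 has both $\lambda_1,\lambda_2\ll\lambda$), not ``exactly one,'' but this does not affect the strategy since one deliberately applies the $t^{-1}$ elliptic bound to only one factor and treats the rest with the $t^{-1/2}$ pointwise bounds to obtain $t^{-2}$ (resp.\ $t^{-3/2}$ by placing one factor in $L^2$).
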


\begin{proof}
We shall denote
\begin{align*}
    I_{\lambda_1,\lambda_2,\lambda_3}&=\int_{\mathbb{R}}\sgn(y)\dq^y\varphi_{\lambda_1}\dq^y\varphi_{\lambda_2}\dq^y\varphi_{\lambda_3}\,dy
\end{align*}
and consider two cases in the frequency sum for $\partial_x P_\lambda Q^{unbal}$. 

First we consider the case in which we have two low separated frequencies. We assume without loss of generality that $\lambda_3=\lambda$ and $\lambda_1<\lambda_2\ll \lambda$. In this case, the elliptic estimates will be applied for the factor $\varphi_{\lambda_1}$. Precisely, from Lemma \ref{Trilinear integral estimate} and estimates \ref{Pointwise Vector Field Bound 2}, \ref{Pointwise Elliptic Estimate}, and \ref{L2 Elliptic Estimate}, we get that
\begin{align*}
    \left\|\chi^1_{\lambda}I_{\lambda_1,\lambda_2,\lambda_3}\right\|_{L_x^{\infty}}&\lesssim \lambda_1\frac{\lambda_1^{-1/2}}{t}\|\varphi\|_X(\lambda_2^{1-2\delta}+\lambda_2)\|\varphi_{\lambda_2}\|_{L_x^\infty}\lambda_3^{\delta}\|\varphi_{\lambda_3}\|_{L_x^\infty}\\
    &\lesssim \frac{\lambda_1^{1/2}}{t}\|\varphi\|_X\lambda_2^{1/4}(\lambda_2^{3/4-2\delta}+\lambda_2^{3/4})\|\varphi_{\lambda_2}\|_{L_x^\infty}\lambda^{-2}\lambda^{2+\delta}\|\varphi_{\lambda}\|_{L_x^\infty}\\
    &\lesssim \lambda_1^{1/2}\lambda_2^{1/4}\lambda^{-2}\frac{\|\varphi\|^3_X}{t^2}.
\end{align*}
By using dyadic summation in $\lambda_1$ and $\lambda_2$, we deduce that 

\begin{align*}
    \left\|\chi^1_{\lambda}\partial_x\sum_{\substack{
    \lambda_1<\lambda_2\ll \lambda}}I_{\lambda_1,\lambda_2,\lambda_3}\right\|_{L_x^{\infty}}&\lesssim \lambda^{-1/4}\frac{\|\varphi\|_X^3}{t^{2}}.
\end{align*}
Similarly, we deduce that 
\begin{align*}
    \left\|\chi^1_{\lambda}\partial_x\sum_{\substack{
    \lambda_1<\lambda_2\ll \lambda}}I_{\lambda_1,\lambda_2,\lambda_3}\right\|_{L_x^{2}}&\lesssim \lambda^{-1/4}\frac{\|\varphi\|_X^3}{t^{3/2}}
\end{align*}

We now analyze the situation in which $\lambda_1,\lambda_2\gtrsim \lambda$, and $\lambda_1$ and $\lambda_2$ are comparable and both separated from $\lambda$. Thus, we will be able to use $\lambda_1$ and $\lambda_2$ interchangeably.  We replace $\chi^1_{\lambda}$ by $\tilde{\chi}_{\lambda}$, which has double support, and equals $1$ on a comparably-sized neighbourhood of the support of $\chi^1_{\lambda}$. We write
\begin{align*}
    \chi^1_{\lambda}\partial_xP_\lambda=\chi^1_{\lambda}\partial_xP_\lambda\tilde{\chi}_{\lambda}+\chi^1_{\lambda}\partial_xP_\lambda(1-\tilde{\chi}_{\lambda}).
\end{align*}

For the first term, using Lemma \ref{Trilinear integral estimate}, along with estimates \ref{Pointwise Vector Field Bound 2}, \ref{Pointwise Elliptic Estimate}, \ref{L2 Elliptic Estimate}, we get the bounds 
\begin{align*}
    \left\|\chi^1_{\lambda}P_\lambda\tilde{\chi}_{\lambda}I_{\lambda_1,\lambda_2,\lambda_3}\right\|_{L_x^{\infty}}&\lesssim \lambda_2^{1/2+\delta}\frac{\lambda_3^{1-2\delta}+\lambda_3}{t}\|\varphi\|_X\|\varphi_{\lambda_2}\|_{L_x^\infty}\|\varphi_{\lambda_3}\|_{L_x^\infty}\\
    &\lesssim \lambda_2^{-5/4-\delta/2}\lambda_3^{\delta/2}\frac{\|\varphi\|_X}{t}(\lambda_3^{1-5\delta/2}+\lambda_3^{1-\delta/2})\|\varphi_{\lambda_3}\|_{L_x^\infty}\lambda_2^{7/4+3\delta/2}\|\varphi_{\lambda_2}\|_{L_x^\infty}\\
    &\lesssim\lambda_2^{-5/4-\delta/2}\lambda_3^{\delta/2}\frac{\|\varphi\|^3_X}{t^2}
\end{align*}
and
\begin{align*}
    \left\|\chi^1_{\lambda}P_\lambda\tilde{\chi}_{\lambda}I_{\lambda_1,\lambda_2,\lambda_3}\right\|_{L_x^{2}}&\lesssim \lambda_2^{\delta}\frac{\lambda_3^{1-2\delta}+\lambda_3}{t}\|\varphi\|_X\|\varphi_{\lambda_2}\|_{L_x^\infty}\|\varphi_{\lambda_3}\|_{L_x^\infty}\\
    &\lesssim \lambda_2^{-5/4-\delta/2}\lambda_3^{\delta/2}\frac{\|\varphi\|_X}{t}(\lambda_3^{1-5\delta/2}+\lambda_3^{1-\delta/2})\|\varphi_{\lambda_3}\|_{L_x^\infty}\lambda_2^{5/4+3\delta/2}\|\varphi_{\lambda_2}\|_{L_x^\infty}\\
    &\lesssim\lambda_2^{-5/4-\delta/2}\lambda_3^{\delta/2}\frac{\|\varphi\|^3_X}{t^2}.
\end{align*}
By using dyadic summation in $\lambda_1$, $\lambda_2$, and $\lambda_3$ (and by using the fact that $\lambda_1$ and $\lambda_2$ are close), we deduce the bound
\begin{align*}
   \left\|\chi^1_{\lambda}\partial_xP_\lambda\tilde{\chi}_{\lambda}\sum_{\substack{
    \lambda_3\lesssim \lambda_2,\lambda_1\simeq\lambda_2\gtrsim \lambda}}I_{\lambda_1,\lambda_2,\lambda_3}\right\|_{L_x^\infty\cap L_x^{2}}&\lesssim  \lambda^{-1/4}\frac{1}{t^2}\|\varphi\|^3_X. 
\end{align*}

We look at the second term. For every $N$, we know that
\begin{align*}
    \|\chi^1_{\lambda}\partial_xP_\lambda(1-\tilde{\chi}_{\lambda})\|_{L^2\rightarrow L^2}, \|\chi^1_{\lambda}\partial_xP_\lambda(1-\tilde{\chi}_{\lambda})\|_{L^\infty\rightarrow L^\infty}&\lesssim \frac{\lambda^{1-N}}{t^{N}}
\end{align*}
We take $N=\frac{3}{2}$. By carrying out a similar analysis as above, along with Lemma \ref{Trilinear integral estimate} and dyadic summation, we deduce that the contributions corresponding to these terms are also acceptable. 
\end{proof}
\begin{lemma}\label{Semiclassical computation}
We have
\begin{align*}\chi_\lambda((x/t))^3Q(e^{it\phi(x/t)})&=(\chi_\lambda(x/t))^3e^{it\phi(x/t)}q(\phi'(x/t))+h(\lambda,t),
\end{align*}
where for every $a\in(0,1)$
\begin{align*}
|h(\lambda,t)|&\lesssim \frac{\lambda^3}{t^{2-3a}}+\frac{\lambda^2}{t^{1-a}}+\frac{1}{t^{2a}}
\end{align*}
\end{lemma}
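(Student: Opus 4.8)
The plan is to compute $Q(e^{it\phi(x/t)})$ by a stationary-phase / semiclassical expansion of the difference-quotient integral, localized by $\chi_\lambda(x/t)$. Writing $\Phi(x) = t\phi(x/t)$, the cubic form is
\[
Q(e^{i\Phi}) = \frac13 \int \sgn(y)\, |\dq^y e^{i\Phi}|^2 \dq^y e^{i\Phi}\, dy = \frac{e^{i\Phi(x)}}{3}\int \frac{\sgn(y)}{y^3}\,(e^{i(\Phi(x+y)-\Phi(x))}-1)(\text{c.c. factors})\,dy,
\]
so after factoring out $e^{i\Phi(x)}$ the integrand depends on the phase increments $\Phi(x+y)-\Phi(x)$ and their conjugates. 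The idea is to split the $y$-integral at scales dictated by the frequency $\lambda$: on $|x/t| \approx \phi'(x/t)^{-1}$ type regions where $\chi_\lambda$ lives, the relevant oscillation scale of $e^{i\Phi}$ is $\lambda^{-1}$, and the wave packet / stationary region has width $t^{1/2}\lambda^{-1/2}$. First I would Taylor expand $\Phi(x+y) - \Phi(x) = \Phi'(x) y + \tfrac12 \Phi''(x) y^2 + \cdots$, noting $\Phi'(x) = \phi'(x/t)$ and $\Phi''(x) = t^{-1}\phi''(x/t)$. Replacing the phase increment by its linearization $\phi'(x/t)\,y$ produces exactly the model integral $q(\phi'(x/t))$, defined (implicitly, by matching) as
\[
q(\xi) = \frac13 \int \frac{\sgn(y)}{y^3}\,(e^{i\xi y}-1)^2(e^{-i\xi y}-1)\,dy
\]
or the analogous convergent principal-value expression, which converges because the integrand vanishes to first order at $y=0$ and decays like $|y|^{-3}$ at infinity. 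Then $h(\lambda,t)$ is the collection of all error terms.

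The error terms come in three flavors, matching the three terms in the claimed bound. First, the region $|y| \gtrsim t^{a}$ (for the free parameter $a \in (0,1)$), where we do not expand at all but crudely estimate: there $|\dq^y e^{i\Phi}| \lesssim |y|^{-1}$, the three factors give $|y|^{-3}$, integration over $|y| \gtrsim t^a$ contributes $O(t^{-2a})$, after the $\tfrac13$ and with the $\chi_\lambda^3$ cutoff bounded by $1$; this is the $t^{-2a}$ term. (One may alternatively optimize using that on the support of $\chi_\lambda$ one has $|\dq^y e^{i\Phi}| \lesssim \min(\lambda, |y|^{-1})$, but the stated bound only requires the crude estimate.) Second, on $|y| \lesssim t^a$ we expand the phase to first order; the quadratic remainder $\tfrac12 \Phi''(x) y^2 = \tfrac12 t^{-1}\phi''(x/t) y^2$ is $O(t^{-1}|y|^2) = O(t^{-1+2a})$ on this range, and each time it is extracted from one of the three exponential factors it costs one power of $\lambda$ (since $d/d\xi$ of $e^{i\xi y}$ against the $|y|^{-3}\sgn(y)$ kernel, after one integration by parts in $y$, is comparable to multiplication by the frequency $\approx\lambda$ near the stationary point, picking up at worst $\lambda \cdot t^{-1}|y| \sim \lambda\, t^{-1+a}$ after using $|y| \lesssim t^a$). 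Carefully bookkeeping: one factor of the quadratic correction yields $\lambda^2 t^{-1+a}$-type terms (here two powers of $\lambda$ come from the two differentiated exponential factors being hit, or equivalently from the structure $|\dq^y|^2$), matching $\lambda^2 t^{-(1-a)}$; two or three insertions of the correction give higher powers of $\lambda$ but more decay in $t$, and the worst such is $\lambda^3 t^{-(2-3a)}$, matching the first term. Higher-order Taylor remainders ($y^3$ and beyond) are strictly better and absorbed.

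The step I expect to be the main obstacle is the precise bookkeeping of how powers of $\lambda$ are generated in the intermediate region $|y| \lesssim t^a$ — i.e., justifying that each extraction of the subprincipal phase correction costs exactly one factor of $\lambda$ and not more. This requires integrating by parts in $y$ to move derivatives off the singular kernel $\sgn(y)/y^3$ and onto the oscillatory factors, and tracking that the resulting boundary and bulk terms are controlled by the frequency $\lambda$ (valid on $\supp \chi_\lambda$, where $\phi'(x/t) \approx \lambda$ by construction of the velocity partition $J_\lambda = a'(I_\lambda)$). Once the $y$-integrals are organized this way, the three-term bound for $|h(\lambda,t)|$ follows by collecting the worst contribution from each of: (i) the tail $|y|\gtrsim t^a$ giving $t^{-2a}$; (ii) one correction on $|y|\lesssim t^a$ giving $\lambda^2 t^{-(1-a)}$; (iii) three corrections giving $\lambda^3 t^{-(2-3a)}$; all other terms being dominated by one of these three after summing the geometric-type $y$-integrals. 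The cutoff $(\chi_\lambda(x/t))^3$ is carried through unchanged since it is bounded and its derivatives only produce $O(t^{-1})$ factors which are strictly better than what is claimed.
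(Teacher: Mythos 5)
Your high-level decomposition matches the paper's: factor out $e^{it\phi(x/t)}$, split the factored difference quotient $e^{-it\phi(x/t)}\dq^y e^{it\phi(x/t)}$ into a linearized-phase piece $b = (e^{iy\phi'(x/t)}-1)/y$ and a quadratic remainder $a$, split the $y$-integral at $|y| = t^a$, and estimate the tail crudely by $t^{-2a}$. The term with all three $b$'s reproduces $q(\phi'(x/t))$, and the error is organized by how many $a$'s appear. This is exactly the paper's $T_1+T_2$, $T_2 = T_2^1 + T_2^2$, $T_2^1 = T_{2a}+T_{2b}$ decomposition.

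However, the bookkeeping for the single-correction term (one $a$, two $b$'s) has a genuine gap. You assert this term contributes $\lambda^2 t^{-(1-a)}$, explained by the heuristic that ``each extraction of the subprincipal phase correction costs one power of $\lambda$,'' with the mechanism being integration by parts against the singular kernel. That accounting does not close. On $\supp\chi_\lambda$ and $|y|\le t^a$, the relevant pointwise bounds are $|b|\lesssim\min(\lambda,|y|^{-1})$ and $|a|\lesssim\lambda|y|/t$ (so $|a|\lesssim\lambda t^{a-1}$). If you use $|b|\lesssim\lambda$ for both $b$-factors, you get $|ab^2|\lesssim\lambda^3|y|/t$, which after integrating over $|y|\le t^a$ gives $\lambda^3 t^{2a-1}$ --- strictly worse than the stated bound (worse than both $\lambda^2 t^{-(1-a)}$ and $\lambda^3 t^{-(2-3a)}$ by a factor $t^{1-a}$). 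The correct estimate requires splitting the roles of the two $b$'s: use $|b|\lesssim\lambda$ for one and $|b|\lesssim|y|^{-1}$ for the other, so that $|ab^2|\lesssim(\lambda|y|/t)\cdot\lambda\cdot|y|^{-1}=\lambda^2/t$, uniformly in $y$, and then integrating over $|y|\le t^a$ gives $\lambda^2 t^{-(1-a)}$. No integration by parts in $y$ is needed at any point; the paper's proof, and the correct version of yours, uses only these pointwise bounds (the Lagrange form of the quadratic remainder, $\phi'(x/t)\approx\phi''(c_{x,y}/t)\approx\lambda$ on the relevant supports, and direct integration). Your two- and three-correction terms and the tail are fine with the crude bounds $|a|\lesssim\lambda t^{a-1}$, $|b|\lesssim\lambda$, $|a|+|b|\lesssim|y|^{-1}$.
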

\begin{proof}
We write
    \begin{align*}
       e^{-it\phi(x/t)} \dq^ye^{it\phi(x/t)}&=\frac{e^{iy\phi'(x/t)}(e^{it/2\phi''(c_{x,y}/t)y^2/t^2}-1)}{y} + \frac{e^{iy\phi'(x/t)}-1}{y} =: a + b,
    \end{align*}
    where $c_{x,y}$ is between $x$ and $x+y$.
We now use the fact that $x/t$ belongs to the support of $\chi_\lambda$. We have
\begin{align*}
        |\chi_\lambda(x/t)||b| &\lesssim \lambda.
    \end{align*}
    Moreover, when $|y|\leq t^a$, $\displaystyle|c_{x,y}/t-x/t|\leq|y/t|\leq t^{a-1}$. This implies that $c_{x,y}/t$ belongs to the support of the enlarged cut-off $\chi_\lambda^1$, hence $\phi''(c_{x,y}/t)\simeq\lambda$. We note the bound
    \begin{align*}
        |\chi_\lambda(x/t)||a|&\lesssim |\chi_\lambda(x/t)||y/(2t)\phi''(c_{x,y}/t)|\left|\frac{e^{\pm i\phi''(c_{x,y}/t)y^2/(2t)}-1}{\phi''(c_{x,y}/t)y^2/(2t)}\right| \lesssim \lambda t^{a-1}
    \end{align*}
    Thus, we have the bounds
    \begin{equation}\label{ab-Bounds1}
    \begin{aligned}
|\chi_\lambda(x/t)||a|&\lesssim \lambda t^{a-1}\\
|\chi_\lambda(x/t)||b|&\lesssim \lambda
\end{aligned}
    \end{equation}
    We also note the cruder bounds
    \begin{equation}\label{ab-Bounds2}
    \begin{aligned}
|\chi_\lambda(x/t)||a|+
|\chi_\lambda(x/t)||b|&\lesssim \frac{1}{|y|}
\end{aligned}
    \end{equation}
    We write
    \begin{align*}
(\chi_\lambda(x/t))^3Q(e^{it\phi(x/t)})&=(\chi_\lambda(x/t))^3e^{it\phi(x/t)}\int \left|b\right|^2b\,dy\\
&\quad +(\chi_\lambda(x/t))^3e^{it\phi(x/t)}\int a^2\overline{a}+a^2\overline{b}+2|a|^2b+2a|b|^2+b^2\overline{a}\,dy:=T_1+T_2
    \end{align*}
We note that
\begin{align*}
    T_1&=(\chi_\lambda(x/t))^3e^{it\phi(x/t)}\int \left|b\right|^2b\,dy=(\chi_\lambda(x/t))^3e^{it\phi(x/t)}q(\phi'(x/t)),
\end{align*}
so we only need to analyze $T_2$.

We first bound the contribution over the region $|y|\leq t^a$, which we shall denote by $T_2^1$. We denote the contribution over the region $|y|>t^a$ by $T_2^2$. We have 
    \begin{align*}
T_2^1&=(\chi_\lambda(x/t))^3e^{it\phi(x/t)}\int_{|y|\leq t^a} a^2\overline{a}+a^2\overline{b}+2|a|^2b\,dy\\
&+(\chi_\lambda(x/t))^3e^{it\phi(x/t)}\int_{|y|\leq t^a} 2a|b|^2+b^2\overline{a}\,dy:=T_{2a}+T_{2b},
    \end{align*}
    \ref{ab-Bounds1} implies that 
    \begin{align*}
|T_{2a}|&\lesssim |\chi_{\lambda}(x/t)|^3\int_{|y|\leq t^a}|a|^3+|a|^2|b|\,dy\lesssim \int_{|y|\leq t^a}\frac{\lambda^2}{t^{2-2a}}\left(\frac{\lambda}{t^{1-a}}+\lambda\right)\lesssim \int_{|y|\leq t^a}\frac{\lambda^3}{t^{2-2a}}\,dy\\
&\lesssim t^a\frac{\lambda^3}{t^{2-2a}}\lesssim \frac{\lambda^3}{t^{2-3a}}
    \end{align*}
    
  \ref{ab-Bounds1} and \ref{ab-Bounds2} imply the bound
  \begin{align*}
|\chi_{\lambda}(x/t)|^3|b|^2|a|&=|\chi_{\lambda}(x/t)|^3\left|b\right|^2|y/(2t)\phi''(c_{x,y}/t)|\left|\frac{e^{\pm i\phi''(c_{x,y}/t)y^2/(2t)}-1}{\phi''(c_{x,y}/t)y^2/(2t)}\right|\\
&\lesssim\lambda\frac{1}{|y|}|\chi_{\lambda}(x/t)||y/(2t)\phi''(c_{x,y}/t)|\lesssim \frac{\lambda^2}{t}
  \end{align*}
    It follows that $T_{2b}$ satisfies the bound 
    \begin{align*}
|T_{2b}|&\lesssim |\chi_\lambda(x/t)|^3\int_{|y|\leq t^a}\left|b\right|^2|a|\,dy\lesssim\int_{|y|\leq t^a}\frac{\lambda^2}{t}\,dy\lesssim\frac{\lambda^2}{t^{1-a}}
    \end{align*}

For $T_2^2$, \ref{ab-Bounds2} implies that
  \begin{align*}
|T_2^2|\lesssim \int_{|y|>t^a}\frac{1}{|y|^3}\,dy\lesssim \frac{1}{t^{2a}}.
  \end{align*}
\end{proof}
\

\subsection{The asymptotic equation for $\gamma$}

Here we prove the following:

\begin{proposition}\label{Asymptotic equation}
Let $v\in J_\lambda$. Under the assumption $(t,v)\in\mathcal{D}$, we have
\begin{align*}
    \dot{\gamma}(t,v)=iq(\xi_v)\xi_vt^{-1}\gamma(t,v)|\gamma(t,v)|^2+f(t,v),
\end{align*}
where
\begin{align*}
    |f(t,v)| + \|f(t,v)\|_{L_v^2(J_{\lambda})} &\lesssim \lambda^{-1/4}t^{-6/5+C\eps^2}\eps.
\end{align*}
\end{proposition}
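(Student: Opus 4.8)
The strategy is the standard wave packet testing computation: differentiate $\gamma^\lambda(t,v) = \langle \varphi, \pax^v_\lambda\rangle$ in $t$, use the equation \eqref{SQG} for $\varphi$ and Lemma~\ref{l:paxsoln} for $\pax^v$, and show that all terms except the cubic resonant one are lower order. Write the equation in the semilinear form $i\D_t\varphi - A(D)\varphi = -iQ(\varphi) + (\text{other nonlinear terms})$, where $A(D) = -2D\log|D|$; here the leading nonlinearity after paralinearization and the analysis of Section~\ref{s:paralinearization} should reduce, modulo perturbative errors controlled by the $X$ and $Y$ norms, to the cubic $Q(\varphi)$ defined above (with the paradifferential pieces $T_{B^0}$, $T_{F(\varphi_x)}\log|D_x|$ absorbed since their contributions are effectively linear in the testing against a single wave packet, up to acceptable errors). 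Then
\[
\dot\gamma^\lambda(t,v) = \langle \D_t\varphi, \pax^v_\lambda\rangle + \langle \varphi, \D_t\pax^v_\lambda\rangle,
\]
and substituting $\D_t\varphi = -iA(D)\varphi - Q(\varphi) + \cdots$ and $\D_t\pax^v = -iA(D)\pax^v + t^{-3/2}(L\pax^{v,I} + \rpax^v)$ from Lemma~\ref{l:paxsoln}, the two $A(D)$ terms cancel by self-adjointness of $A(D)$, leaving a cubic term $-\langle Q(\varphi), \pax^v_\lambda\rangle$ plus the $t^{-3/2}$ wave packet corrections tested against $\varphi$.

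Next I would localize and extract the resonant part of $-\langle Q(\varphi),\pax^v_\lambda\rangle$. First replace $Q(\varphi)$ by $Q^{bal}(\varphi)$ using Lemma~\ref{Unbalanced cubic estimate} (the unbalanced part, tested against $\pax^v_\lambda$, gives a contribution of size $\lambda^{-1/4} t^{-2} \|\varphi\|_X^3 \cdot t^{1/2} \lesssim \lambda^{-1/4} t^{-3/2}\eps^3$ after accounting for the $L^1$ norm $\sim t^{1/2}$ of the wave packet, which is better than claimed). For the balanced part, use Lemma~\ref{Error bounds 1} and Lemma~\ref{Beta bounds} to replace each $\varphi_{\lambda_i}$ inside $Q$, on the wave packet spatial scale near $x = vt$, by $t^{-1/2}\gamma(t,v) e^{-it\phi(x/t)}$ (plus the error $r^\lambda$ and the $\beta$ error, both bounded appropriately), and replace the cutoffs $\chi_{\lambda_i}$ by a single $\chi_\lambda$ to leading order. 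This turns the balanced $Q$ into $t^{-3/2}|\gamma|^2\gamma$ times $Q(e^{it\phi(x/t)})$ evaluated on the wave packet, and Lemma~\ref{Semiclassical computation} identifies this (upon choosing $a$ optimally, e.g. $a$ near $3/5$) as $e^{it\phi}q(\phi'(x/t))$ with an error $h(\lambda,t)$ whose testing against $\pax^v_\lambda$ contributes at the level $\lambda^{-1/4}t^{-6/5+C\eps^2}\eps$; on the wave packet $\phi'(x/t) \approx \phi'(v) = \xi_v$, and the Fourier/oscillatory integral $\langle e^{it\phi(x/t)}\chi_\lambda^3, \pax^v_\lambda\rangle$ produces the factor $t^{1/2}\xi_v$ (stationary phase / the normalization of $\pax^v$), yielding exactly $iq(\xi_v)\xi_v t^{-1}\gamma|\gamma|^2$. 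The $t^{-3/2}$ wave-packet-correction terms $\langle \varphi, t^{-3/2}(L\pax^{v,I} + \rpax^v)\rangle$ are estimated via Lemma~2.3 of \cite{ITpax} as in the proof of Lemma~\ref{Error bounds 1}: $t^{-3/2}\langle \varphi_\lambda, L\pax^{v,I}\rangle \lesssim t^{-3/2}(t\lambda)^{1/4}\|L\varphi_\lambda\|_{L^2} \lesssim t^{-3/2}(t\lambda)^{1/4}\lambda^{-1}\|\varphi\|_X$, which fits into $f$. Collecting all error contributions — the unbalanced cubic, the $r^\lambda$ and $\beta$ substitution errors, the semiclassical error $h$, the $t^{-3/2}$ corrections, and the perturbative nonlinear errors from reducing \eqref{SQG} to the $Q$ model (controlled using Corollary~\ref{Energy Estimates} / \eqref{Vector Field Energy Estimate} and the bootstrap \eqref{bootstrap}) — and using $\|\varphi\|_X \lesssim \eps t^{C\eps^2}$, $\|\varphi\|_Y \lesssim \eps t^{-1/2}$, one checks each is $\lesssim \lambda^{-1/4} t^{-6/5 + C\eps^2}\eps$. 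The $L^2_v(J_\lambda)$ bound follows by running the same estimates with the $L^2_x$ versions of the elliptic/vector-field bounds in Lemma~\ref{Elliptic bounds for the derivative} and Lemma~\ref{Gamma bounds} in place of the $L^\infty$ ones.

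The main obstacle is the bookkeeping in the balanced-cubic term: one must simultaneously (i) justify replacing $\varphi_\lambda$ by its wave packet profile inside a \emph{nonlinear} (difference-quotient) expression — this is where Lemma~\ref{Beta bounds} and the restriction $(t,v)\in\mathcal D$ (so that $tJ_\lambda$ dominates the wave packet scale $\delta x \approx t^{1/2}\lambda^{-1/2}$) are essential — and (ii) carefully track how the semiclassical error exponents in Lemma~\ref{Semiclassical computation} combine with the $t^{1/2}$ loss from testing against $\pax^v$ and the dyadic summation over $\lambda_1,\lambda_2,\lambda_3$ near $\lambda$, to land exactly at the exponent $-6/5$. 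Optimizing the free parameter $a$ in Lemma~\ref{Semiclassical computation} against these competing powers of $t$ is the delicate quantitative point; the factor $t^{C\eps^2}$ is then simply absorbed from the energy growth \eqref{Vector Field Energy Estimate}. A secondary technical point is confirming that the non-$Q$ nonlinear terms in \eqref{SQG} (the paradifferential $T_{B^0}$ and $\log|D_x|$ pieces), when tested against a single wave packet, genuinely contribute only to the linear flow plus perturbative errors — this uses that their symbols are, to leading order, real and slowly varying, so they act diagonally on wave packets up to commutators of the size already controlled in Section~\ref{s:energy}.
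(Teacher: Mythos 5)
The overall architecture of your argument matches the paper's: differentiate $\gamma^\lambda$, substitute the equation for $\D_t\varphi$ and Lemma~\ref{l:paxsoln} for $\D_t\pax^v$, cancel the two $A(D)$ pieces, split the cubic into balanced/unbalanced (Lemma~\ref{Unbalanced cubic estimate} for the unbalanced part), then on the wave packet scale replace $\chi_\lambda\varphi_\lambda$ by the profile $t^{-1/2}\chi_\lambda\gamma e^{it\phi}$ via Lemmas~\ref{Error bounds 1} and \ref{Beta bounds}, and finish with Lemma~\ref{Semiclassical computation} with $a=3/5$. Your accounting of the $I_2$ term and the unbalanced error are fine.

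There is, however, a genuine gap in your very first reduction. You write the equation in the form $i\D_t\varphi - A(D)\varphi = -iQ(\varphi)+\cdots$ and justify this by appealing to ``the leading nonlinearity after paralinearization and the analysis of Section~\ref{s:paralinearization}\,'', with the paradifferential pieces $T_{B^0(\varphi)}$ and $T_{F(\varphi_x)}\log|D_x|$ ``absorbed since their contributions are effectively linear.'' That is not what happens. Proposition~\ref{paralinearization} replaces $A_\varphi\varphi_x$ by $\D_x T_{B^0(\varphi)}\varphi - 2\D_x T_{F(\varphi_x)}\log|D_x|\varphi + \mathcal R(\varphi)$; the \emph{entire} cubic nonlinearity, including the resonant part that produces the phase rotation in the asymptotic ODE, sits inside the paradifferential coefficients $B^0(\varphi)$ and $F(\varphi_x)$, which are quadratic in $\varphi$. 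They cannot be discarded as ``effectively linear'' --- if you drop them you lose precisely the term $iq(\xi_v)\xi_v t^{-1}|\gamma|^2\gamma$ you are trying to extract. The paper does not use the paralinearization at all in this proof. It works directly with $A_\varphi\varphi_x=\int F(\dq^y\varphi)\sdq^y\varphi_x\,dy$ and Taylor-expands $F(s)=\tfrac12 s^2+O(s^4)$: the quadratic part of $F$ gives exactly $\tfrac12\int(\dq^y\varphi)^2\sdq^y\varphi_x\,dy=\tfrac12\D_x Q(\varphi)$, while the quartic-and-higher Taylor remainder $\int_0^1\tfrac{(1-h)^2}{2}(\dq^y\varphi)^3F'''(h\dq^y\varphi)\,dh$ is estimated via Moser and the multilinear integral Lemma~\ref{Trilinear integral estimate} and is perturbative. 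So the route to $Q$ is a direct Taylor expansion of $F$, not the paraproduct decomposition; conflating the two is the conceptual misstep here. Once that step is corrected, the remainder of your outline proceeds as in the paper.

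A minor point: your profile substitution uses $e^{-it\phi(x/t)}$; the substitute functions $\psi,\theta$ in the proof carry $e^{+it\phi(x/t)}$ (consistent with the wave packet phase). This is only a sign bookkeeping issue, but worth noting when you fill in details.
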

\begin{proof} 
We have
\begin{align*}
  \dot{\gamma}(t,v)=\left\langle \dot{\varphi},\mathbf{u}^v_\lambda\right\rangle+\left\langle \varphi,\dot{\mathbf{u}^v_\lambda}\right\rangle  &=\left\langle P_\lambda A_\varphi \varphi,\pax^v\right\rangle + i\langle \varphi_\lambda,(i\D_t - A(D))\mathbf{u}^v\rangle:=I_1+I_2.
\end{align*}

We first analyze $I_2$. We use Lemma~\ref{l:paxsoln} to write
\[
(i\D_t - A(D)) \pax^v = t^{-\frac32}(L \pax^{v, I} + \rpax^v)
\]
\begin{align*}
   \left|\langle \varphi_\lambda,(i\D_t - A(D))\mathbf{u}^v\rangle\right|&\lesssim t^{-\frac32}(\| L\varphi_\lambda\|_{L_x^2} \cdot \lambda^{1/2}\lambda^{1/4}t^{1/4} + \|\varphi_\lambda\|_{L_x^2}  \cdot \lambda^{-1/2}\lambda^{1/4}t^{1/4})\\
   &\lesssim \lambda^{-1/4} t^{-5/4}\|\varphi\|_X\\
   &\lesssim \lambda^{-1/4}t^{-5/4}\eps t^{C\eps^2}
\end{align*}
and 
\begin{align*}
   \|\chi_\lambda \langle \varphi_\lambda,(i\D_t - A(D))\mathbf{u}^v\rangle\|_{L_v^2}&\lesssim t^{-3/2}\left(\|L\varphi_\lambda\|_{L_x^2}\lambda^{1/2}+\|\varphi_\lambda\|_{L_x^2}\lambda^{-1/2}\right)\\
   &\lesssim \lambda^{-1/4} t^{-5/4}t^{-1/4}\lambda^{-1/4}\|\varphi\|_X\\
   &\lesssim \lambda^{-1/4} t^{-5/4}\eps t^{C\eps^2}\lesssim \lambda^{-1/4} t^{-6/5}\eps t^{C\eps^2}
\end{align*}
(we have used the condition $\displaystyle (t,v)\in\mathcal{D}$.)

\

In the remaining part of this section we shall analyze the term $I_1$. We first exchange $F$ for its principal quadratic term, expanding
\begin{align*}
   F(\dq^y\varphi)-\frac{1}{2}(\dq^y\varphi)^2=\int_0^1\frac{(1-h)^2}{2}(\dq^y\varphi)^3F'''(h\dq^y\varphi)\,dh.
\end{align*}
From Moser's estimate (the nonlinear version, as well as the one for products), Lemma \ref{Trilinear integral estimate}, and Sobolev embedding, we get that 

\begin{align*}
   \lambda^{\frac{1}{4}}&\left\|P_\lambda \int (\dq^y\varphi)^3F'''(t\dq^y\varphi)\sdq^y\varphi_x \,dy\right\|_{L_x^{\infty}}\\
   &\lesssim \int\||D_x|^{1/2}((\dq^y\varphi)^3\sdq^y\varphi_xF'''(t\dq^y\varphi))\|_{L_x^{4}}\,dy\\
   &\lesssim \|\varphi_x\|_{L_x^\infty}\||D_x|^{\frac{1}{2}}\varphi_{x}\|_{L_x^{4}}(\||D_x|^{1-\delta}\varphi\|_{L_x^\infty}\|\varphi_x\|^2_{L_x^\infty}+\|\varphi_x\|^2_{L_x^\infty}\|\varphi_{xx}\|_{L_x^\infty})\\
   &+\|\varphi_x\|_{L_x^\infty}\||D_x|^{\frac{1}{2}}\varphi_{xx}\|_{L_x^{4}}
   (\|\varphi_x\|^3_{L_x^\infty}+\|\varphi\|_{L_x^\infty}\|\varphi_x\|_{L_x^\infty}\||D_x|^{1-\delta}\varphi\|_{L_x^\infty})\\
   &\lesssim \frac{1}{t^{5/4}}\eps^5\langle t\rangle^{C\eps^2}.
\end{align*}

We have also used Sobolev embedding and the classical Moser estimate, keeping in mind $F'''(0)=0$. Similarly,
\begin{align*}
   \left\|\int_{\mathbb{R}}P_\lambda((F(\dq^y\varphi)-\frac{1}{2}(\dq^y\varphi)^2)\sdq^y\varphi_x)\,dy\right\|_{L_x^{2}}&\lesssim 
   \lambda^{-1/4}\frac{1}{t^{3/2}}\eps^5\langle t\rangle^{C\eps^2}.
\end{align*}

By H\"older's inequality and Young's inequality respectively,
\begin{align*}
\left|\left\langle P_\lambda \int_{\mathbb{R}}\left(F(\dq^y\varphi)-\frac{1}{2}(\dq^y\varphi)^2\right)\sdq^y\varphi_x\,dy,\mathbf{\varphi}_v\right\rangle\right|&\lesssim \lambda^{-1/4}\frac{1}{t^{3/2}}\eps^5\langle t\rangle^{C\eps^2}, \\
\left\|\left\langle P_\lambda\int_{\mathbb{R}}\left(F(\dq^y\varphi)-\frac{1}{2}(\dq^y\varphi)^2\right)\sdq^y\varphi_x\,dy,\mathbf{\varphi}_v\right\rangle\right\|_{L_v^2(J_{\lambda})}&\lesssim \lambda^{-1/4}\frac{1}{t^{3/2}}\eps^5\langle t\rangle^{C\eps^2}.\end{align*}

\

We are left to estimate
\begin{align*}
\left\langle P_\lambda\int_{\mathbb{R}}(\dq^y\varphi)^2\sdq^y\varphi_x\,dy,\mathbf{u}^v\right\rangle&=\left\langle \partial_x P_\lambda Q^{\text{bal}}(\varphi),\mathbf{u}^v\right\rangle+\left\langle \chi_\lambda^1\partial_x P_\lambda Q^{\text{unbal}}(\varphi),\mathbf{u}^v\right\rangle\\
&\quad +\left\langle (1-\chi_\lambda^1)\partial_x P_\lambda Q^{\text{unbal}}(\varphi),\mathbf{u}^v\right\rangle,
\end{align*}
where $\chi^1_{\lambda}$ be a cut-off function enlarging $\chi_{\lambda}$. Due to the fact that $\displaystyle \mathbf{u}^v$ is supported in the region $\displaystyle \left|\frac{x}{t}-v\right|\lesssim \lambda^{-1/2}t^{-1/2}$, the condition $(t,v)\in\mathcal{D}$ will imply that the third term is identically zero, while Lemma \ref{Unbalanced cubic estimate} implies that the second term is an acceptable error. 
Thus, we only have to analyze
\begin{align*}
\left\langle \partial_x P_\lambda Q^{\text{bal}}(\varphi),\mathbf{u}^v\right\rangle =\left\langle \partial_x P_\lambda Q(\varphi_\lambda),\mathbf{u}^v\right\rangle.
\end{align*}
 Let $\chi^1$ be a cut-off function that is equal to $1$ on the support of the wave packet $\mathbf{u}_v$. Let $\tilde{\chi}$ be another cut-off function whose support is  slightly larger than the one of $\chi^1$.
 We write
 \begin{align*}
\left\langle \partial_x P_\lambda Q(\varphi_\lambda),\mathbf{u}^v\right\rangle&=\left\langle \partial_x P_\lambda \tilde{\chi}Q(\varphi_\lambda),\mathbf{u}^v\right\rangle+\left\langle \chi^1\partial_x P_\lambda(1-\tilde{\chi})Q(\varphi_\lambda),\mathbf{u}^v\right\rangle
 \end{align*}
 As in the proof of Lemma \ref{Unbalanced cubic estimate}, we note that the operator norm bounds
 \begin{align*}
\|\chi^1\partial_xP_\lambda(1-\tilde{\chi})\|_{L^\infty\rightarrow L^\infty}+\|\chi^1\partial_xP_\lambda(1-\tilde{\chi})\|_{L^2\rightarrow L^2}&\lesssim \lambda^{1-2N}t^{-N}
 \end{align*}
 for every $N$ imply that the second term is acceptable error. This leaves us with the first.
 
We first replace $\varphi_\lambda$ by $\chi_\lambda\varphi_\lambda$. From Lemma \ref{Cubic difference bound}, we have
\begin{align*}
|\langle \partial_x P_\lambda \tilde{\chi}&(Q(\varphi_\lambda)-Q(\chi_\lambda\varphi_\lambda)),\mathbf{u}^v\rangle|\\
&\lesssim \lambda\|(\partial_x(\chi_\lambda\varphi_\lambda),\partial_x\varphi_\lambda)\|_{L_x^\infty}^2\|\partial_x((1-\chi_\lambda)\varphi_\lambda)\|_{L_x^{\infty}}(\|\mathbf{u}^v\|_{L_x^1}+\|\mathbf{u}^v\|_{L_x^{\frac{1}{1-\delta}}})\\
&+\lambda\|(\partial_x(\chi_\lambda\varphi_\lambda),\partial_x\varphi_\lambda)\|
_{L_x^{\frac{1}{2\delta}}}\|(\chi_\lambda\varphi_\lambda,\varphi_\lambda)\|_{L_x^\infty} \|\partial_x((1-\chi_\lambda)\varphi_\lambda)\|_{L_x^{\infty}}(\|\mathbf{u}^v\|_{L_x^1}+\|\mathbf{u}^v\|_{L_x^{\frac{1}{1-\delta}}})
\end{align*}
By interpolation, along with Lemma \ref{Elliptic bounds for the derivative} and the condition $(t,v)\in\mathcal{D}$, it follows that the errors are acceptable.  The $L_x^2$-bound is similar.

\

We now denote
\[
\psi(t,x)=t^{-\half}\chi_\lambda(x/t)\gamma(t,x/t)e^{it\phi(x/t)}
\]
and replace $\chi_\lambda\varphi_\lambda$ by $\psi$. From Lemma \ref{Cubic difference bound}, we have
\begin{align*}
|\langle \partial_x P_\lambda \tilde{\chi}&(Q(\chi_\lambda\varphi_\lambda)-Q(\psi)),\mathbf{u}^v\rangle|\\
&\lesssim \lambda\|(\partial_x(\chi_\lambda\varphi_\lambda),\partial_x\psi)\|_{L_x^\infty}^2\|\partial_x(\chi_\lambda(x/t) r^\lambda)\|_{L_x^{\infty}}(\|\mathbf{u}^v\|_{L_x^1}+\|\mathbf{u}^v\|_{L_x^{\frac{1}{1-\delta}}})\\
&+\lambda\|(\partial_x(\chi_\lambda\varphi_\lambda),\partial_x\psi)\|
_{L_x^{\frac{1}{2\delta}}}\|(\chi_\lambda\varphi_\lambda,\psi)\|_{L_x^\infty} \|\partial_x(\chi_\lambda(x/t) r^\lambda)\|_{L_x^{\infty}}(\|\mathbf{u}^v\|_{L_x^1}+\|\mathbf{u}^v\|_{L_x^{\frac{1}{1-\delta}}})
\end{align*}
By interpolation, along with Lemmas \ref{Error bounds 1} and \ref{Gamma bounds}, and the condition $(t,v)\in\mathcal{D}$, it follows that the errors are acceptable.  The $L_x^2$-bound is similar.

\
We now denote
\[
\theta(t,x)=t^{-\half}\chi_\lambda(x/t)\gamma(t,v)e^{it\phi(x/t)}
\]
and replace $\psi$ by $\theta$. We evaluate
\begin{align*}
\left\langle \partial_x P_\lambda \tilde{\chi}(Q(\psi)-Q(\theta)),\mathbf{u}^v\right\rangle
\end{align*}
We have
\begin{align*}
\left|\tilde{\chi}(Q(\psi)-Q(\theta))\right|&\lesssim \left|\tilde{\chi}\left(\frac{x-vt}{\sqrt{|ta''(\xi_v)|}}\right)\right|\int(|\dq^y\psi|^2+|\dq^y\theta|^2)|\delta^y\beta^\lambda_v(x)|\,dy
\end{align*}
The support condition of $\tilde{\chi}$ implies that $x$ is in the region $|x-vt|\lesssim \delta x=t^{1/2}\lambda^{-1/2}$. From Lemma \ref{Beta bounds} we now get that
\begin{align*}
\left|\tilde{\chi}(Q(\psi)-Q(\theta))\right|\lesssim t^{-3/4}\lambda^{-1/4}\|\varphi\|_X\int(|\dq^y\psi|^2+|\dq^y\theta|^2)\,dy 
\end{align*}
Bernstein's inequality, and Lemma \ref{Trilinear integral estimate}, and Sobolev embedding, imply that
\begin{align*}
\left|\tilde{\chi}(Q(\psi)-Q(\theta))\right|&\lesssim t^{-3/4}\lambda^{3/4}\|\varphi\|_X(\|\psi_x\|^2_{L_x^\infty}+\|\theta_x\|^2_{L^\infty})\|\mathbf{u}^v\|_{L_x^1}\\
&+t^{-3/4}\lambda^{3/4}\|\varphi\|_X(\|\psi\|_{L_x^\infty}\||D_x|^{1-\delta}\psi\|_{L_x^{\frac{1}{\delta}}}+\|\theta\|_{L_x^\infty}\||D_x|^{1-\delta}\theta\|_{L_x^{\frac{1}{\delta}}})\|\mathbf{u}^v\|_{L_x^{\frac{1}{1-\delta}}}\\
&\lesssim t^{-3/4}\lambda^{3/4}\|\varphi\|_X(\|\psi_x\|^2_{L_x^\infty}+\|\theta_x\|^2_{L^\infty})\|\mathbf{u}^v\|_{L_x^1}\\
&+t^{-3/4}\lambda^{3/4}\|\varphi\|_X(\|\psi\|_{L_x^\infty}\|\psi_x\|_{L_x^{\frac{1}{2\delta}}}+\|\theta\|_{L_x^\infty}\|\theta_x\|_{L_x^{\frac{1}{2\delta}}})\|\mathbf{u}^v\|_{L_x^{\frac{1}{1-\delta}}}
\end{align*}

From Lemma \ref{Gamma bounds} along with the condition $(t,v)\in\mathcal{D}$, it follows that this error is acceptable. The $L_x^2$-bound is similar.

\

We are left to analyze
\begin{align*}
t^{-3/2}\gamma(t,v)|\gamma(t,v)|^2\left\langle \partial_x P_\lambda Q(\chi_\lambda e^{it\phi(x/t)}),\mathbf{u}^v\right\rangle.
\end{align*}
Since by Lemma~\ref{Gamma bounds},
\[
\|t^{-3/2}\gamma(t,v)|\gamma(t,v)|^2\|_{L_v^\infty(J_\lambda)} \lesssim  \|\varphi_\lambda\|^3_{L_x^\infty}, \qquad \|t^{-3/2}\gamma(t,v)|\gamma(t,v)|^2\|_{L_v^2(J_\lambda)} \lesssim t^{-1/2}\|\varphi_\lambda\|^2_{L_x^\infty}\|\varphi_\lambda\|_{L_x^2}
\]
it suffices to estimate
\[
|\langle \partial_x P_\lambda Q(\chi_\lambda e^{it\phi(x/t)}),\mathbf{u}^v\rangle - t^{\half}q(\xi_v)\xi_v(\chi_\lambda(v))^3| \lesssim \lambda^{-1/4}t^{3/10+C\epsilon^2}\epsilon.
\]

 We note that
    \begin{align*}
\dq^y(\chi_{\lambda}e^{\pm it\phi(x/t)})&=\chi_{\lambda}\dq^y\left(e^{\pm it\phi(x/t)}\right)+\dq^y(\chi_{\lambda})e^{\pm it\phi((x+y)/t)}.
\end{align*}
Lemma \ref{Trilinear integral estimate}, implies that for every $\delta>0$ we have
\begin{align*}
\left|\left\langle\partial_x P_\lambda\int \dq^y(\chi_\lambda)e^{it\phi((x+y)/t)}\dq^y(\chi_\lambda e^{-it\phi(x/t)})\dq^y(\chi_\lambda e^{it\phi(x/t)})\,dy,\mathbf{u}^v \right\rangle\right|&\lesssim \lambda(t^{\delta-1/2}\lambda +t^{-1/2}\lambda^2).
\end{align*}
The most problematic contribution is the one that arises from the first term. We have
\begin{align*}
\lambda^{9/4}t^{\delta-1/2}\|\varphi_\lambda\|^3_{L_x^\infty}\lesssim t^{\delta-2}\|\varphi\|^3_X\lesssim t^{-6/5}\epsilon^3t^{C\epsilon^2}
\end{align*}
The other term is analogous.

The $L_v^2$-bound is treated similarly, and so is the case in which one chooses the term $\displaystyle\dq^y(\chi_{\lambda})e^{-it\phi((x+y)/t)}$ in the expansion of $\displaystyle \dq^y(\chi_{\lambda}e^{-it\phi(x/t)})$. This leaves us with
\begin{align*}
\langle \partial_x P_\lambda\left(\chi_\lambda(x/t)^3Q(e^{it\phi(x/t)})\right),\mathbf{u}^v\rangle
\end{align*}
Lemma \ref{Semiclassical computation}, implies that we can replace the latter with
\begin{align*}
\langle \partial_x P_\lambda\left(\chi_\lambda(x/t)^3e^{it\phi(x/t)}(\phi'(x/t))^2q(1)\right),\mathbf{u}^v\rangle,
\end{align*}
 with error bounded by 
\begin{align*}
\lambda t^{1/2} \left(\frac{\lambda^3}{t^{2-3a}}+\frac{\lambda^2}{t^{1-a}}+\frac{1}{t^{2a}}\right)
\end{align*}
We note that one problematic contribution is the one arising from the last term. We have the bound
\begin{align*}
\frac{\lambda^{5/4}}{t^{2a-1/2}}\|\varphi_\lambda\|^3_{L_x^\infty}&\lesssim t^{-2a}\|\varphi\|^3_X
\end{align*}
By picking $a=\frac{3}{5}$, we deduce that this contribution is acceptable. The only other problematic contribution is the one arising from the first term, for which we bound
\begin{align*}
\frac{\lambda^{17/4}}{t^{3/2-3a}}\|\varphi_\lambda\|^3_{L_x^\infty}&\lesssim t^{3a-3}\|\varphi\|^3_X\lesssim t^{-6/5}\epsilon^3 t^{C\epsilon^2}
\end{align*}
The contribution arising from the second term can be immediately bounded by
\begin{align*}
\frac{\lambda^{13/4}}{t^{1/2-a}}\|\varphi_\lambda\|^3_{L_x^\infty}&\lesssim t^{a-2}\|\varphi\|^3_X\lesssim t^{-7/5}\epsilon^3 t^{C\epsilon^2}\lesssim t^{-6/5}\epsilon^3 t^{C\epsilon^2}
\end{align*}
The $L_v^2$-bound is similar.

  This means that we have to analyze
\begin{align*}
&q(1)\langle \partial_x (\chi_\lambda(x/t)^3(\phi'(x/t))^2e^{it\phi(x/t)}),\mathbf{u}^v_\lambda \rangle =q(1)\langle (\chi_\lambda(x/t)\phi'(x/t))^3e^{it\phi(x/t)},\mathbf{u}^v_\lambda \rangle\\
&\quad +q(1)t^{-1}\langle (3\chi_\lambda(x/t)^2\chi_\lambda'(x/t)(\phi'(x/t))^2+2\chi_\lambda(x/t)^3\phi'(x/t)\phi''(x/t)e^{it\phi(x/t)}),\mathbf{u}^v_\lambda \rangle,
\end{align*}
where the last contribution can be immediately shown to be an acceptable error by using the condition $(t,v)\in\mathcal{D}$. Further, we may replace $\pax^v_\lambda$ by $\pax^v$. To see this, from the proof of Lemma 5.8 in \cite{ITpax}, we have
\begin{align*}
    \left|P_{\neq\lambda}\mathbf{u}^v\right|&\lesssim \lambda^{1/2}(1+|y|)^{-1-\delta}t^{-1-\delta}\lambda^{-1-\delta}, \qquad y=(x - vt)|ta''(\xi_v)|^{-\half},
\end{align*}
and
\begin{align*}
|(\chi_\lambda(x/t)\phi'(x/t))^3e^{it\phi(x/t)}|&\lesssim \lambda^3.
\end{align*}
Thus,
\begin{align*}
\left|\left\langle (\chi_\lambda(x/t)\phi'(x/t))^3e^{it\phi(x/t)},P_{\neq\lambda}\mathbf{u}^v\right\rangle\right|&\lesssim \lambda^3\lambda^{-1/2-\delta}t^{-1-\delta}t^{1/2}\lambda^{-1/2}\lesssim t^{-1/2-\delta}\lambda^{2-\delta},
\end{align*}
which along with the condition $(t,v)\in\mathcal{D}$ shows that this is an acceptable error.

As $\mathbf{u}^v$ is supported in the region $\displaystyle\left|\frac{x}{t}-v\right|\lesssim t^{-1/2}\lambda^{-1/2}$, we can replace $x/t$ by $v$ in $\chi_\lambda(x/t)\phi'(x/t)$, with acceptable errors. As $\chi_\lambda(v)=1$, the remaining term is now
\begin{align*}
iq(1)(\chi_\lambda(v)\xi_v)^3\left\langle e^{it\phi(x/t)},\mathbf{u}^v\right\rangle &=t^{\half}iq(\xi_v)\xi_v,
\end{align*}
as desired.
\end{proof}
\subsection{Closing the bootstrap argument}
We recall that
\begin{align*}
  \|\varphi_\lambda\|_{L_x^{\infty}}
  &\lesssim \frac{1}{\sqrt{t}}\lambda^{-(3/4-\delta-\delta_1)}\|\varphi\|_X\lesssim \frac{1}{\sqrt{t}}\lambda^{-(3/4-\delta-\delta_1)}\eps t^{C\eps^2},
\end{align*}
when $\lambda\leq 1$
and 
\begin{align*}
  \|\varphi_\lambda\|_{L_x^{\infty}}
  \lesssim \frac{1}{\sqrt{t}}\lambda^{-(2+3\delta/2)}\|\varphi\|_X\lesssim\frac{1}{\sqrt{t}}\lambda^{-(2+3\delta/2)}\epsilon t^{C\epsilon^2},
\end{align*}
when $\lambda>1$.

Thus, if $\displaystyle t\lesssim \lambda^{N}$ when $\lambda>1$, and if $\displaystyle t\lesssim \lambda^{-N}$ when $\lambda\leq 1$, where $N$ can be chosen arbitrarily, we get the desired bounds. We are left to analyze $\displaystyle t\gtrsim \lambda^{N}$ when $\lambda>1$, and $\displaystyle t\gtrsim \lambda^{-N}$ when $\lambda\leq 1$.

We recall the following bounds in the elliptic region:
\begin{align*}
    \||D_x|^{3/4-\delta}((1-\chi_{\lambda})\varphi_\lambda(x))\|_{L_x^{\infty}}&\lesssim \frac{\lambda^{1/4-\delta}}{t}\|\varphi\|_X\lesssim\frac{\lambda^{1/4-\delta}}{t}\eps t^{C^2\eps^2}\\
    \||D_x|^{1+\delta}\partial_x((1-\chi_{\lambda})\varphi_\lambda(x))\|_{L_x^{\infty}}&\lesssim \frac{\lambda^{3/2+\delta}}{t}\|\varphi\|_X\lesssim\frac{\lambda^{3/2+\delta}}{t}\eps t^{C^2\eps^2}
\end{align*}
which gives the desired bounds when $t\gtrsim \lambda^{N}$ ($\lambda>1$), and $t\gtrsim \lambda^{-N}$ ($\lambda\leq 1$).
We still have to bound $\displaystyle \chi_{\lambda}\varphi_\lambda$.
We recall that, if $x/t\in J_{\lambda}$, and $\displaystyle r(t,x)=\chi_{\lambda}\varphi_\lambda(t,x)-\frac{1}{\sqrt{t}}\chi_{\lambda}\gamma(t,x/t)e^{it\phi(x/t)}$,
\begin{align*}
   t^{1/2}\|r^\lambda\|_{L_x^{\infty}}&\lesssim t^{-1/4}\lambda^{-5/4}\eps t^{C\eps^2} 
\end{align*}

We note that
\begin{align*}
  t^{-1/4}\lambda^{-5/4}\eps t^{C\eps^2}&\lesssim \lambda^{-(3/4-\delta-\delta_1)}\eps   
\end{align*}
when $\lambda\leq 1$, because this is equivalent to
\begin{align*}
    \lambda^{-(1/2+\delta+\delta_1)}\lesssim t^{1/4-C\eps^2}
\end{align*}
(this is true when $t\gtrsim \lambda^{-N}$),
and that
\begin{align*}
  t^{-1/4}\lambda^{-5/4}\eps t^{C\eps^2}&\lesssim \lambda^{-(2+3\delta/2)}\eps   
\end{align*}
when $\lambda>1$, because this is equivalent to
\begin{align*}
    \lambda^{3/4+3\delta/2}\lesssim t^{1/4-C\eps^2}
\end{align*}
(this is true when $t\gtrsim \lambda^{N}$).

This means that we only need the bounds
\begin{align*}
   |\gamma(t,v)|\lesssim\eps \lambda^{-(3/4-\delta-\delta_1)} 
\end{align*}
when $\lambda\leq 1$, and
\begin{align*}
    |\gamma(t,v)|\lesssim\eps \lambda^{-(2+3\delta/2)} 
\end{align*}
when $\lambda>1$.
By initializing at time $t=1$, up to which the bounds are known to be true from the energy estimates, and by using Proposition \ref{Asymptotic equation}, we reach the desired conclusion.

\section{Modified scattering}\label{s:scattering}
In this section we discuss the modified scattering behaviour of the global solutions constructed in Section \ref{s:gwp}. We begin by proving the conservation of mass for the solutions of \eqref{SQG}:
\begin{proposition}\label{Conservation of mass}
   For solutions $\varphi$ of \eqref{SQG}, $\|\varphi(t)\|^2_{L^2}$ is conserved in time.  
\end{proposition}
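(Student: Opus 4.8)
The plan is to differentiate $\|\varphi(t)\|_{L^2}^2$ in time and exploit the algebraic structure of the two terms on the right-hand side of \eqref{SQG}. Since $\varphi$ is real valued, writing $\partial_t\varphi = A_\varphi\varphi_x + 2\log|D_x|\D_x\varphi$ gives
\[
\frac12\frac{d}{dt}\|\varphi(t)\|_{L^2}^2 = \langle A_\varphi\varphi_x,\varphi\rangle_{L^2} + 2\langle \log|D_x|\D_x\varphi,\varphi\rangle_{L^2},
\]
so it suffices to show that each term vanishes. The dispersive term is immediate: $\log|D_x|$ is a self-adjoint Fourier multiplier commuting with $\D_x$, so integrating by parts once, $\langle \log|D_x|\D_x\varphi,\varphi\rangle_{L^2} = -\langle \log|D_x|\varphi,\D_x\varphi\rangle_{L^2} = -\langle\log|D_x|\D_x\varphi,\varphi\rangle_{L^2}$, which forces it to be zero; equivalently, $2\log|D_x|\D_x$ is skew-adjoint on $L^2$.

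For the nonlinear term $I := \langle A_\varphi\varphi_x,\varphi\rangle_{L^2}$, I would first record the elementary identity $\frac{1}{|y|} - \frac{1}{\sqrt{y^2 + (\varphi(x+y)-\varphi(x))^2}} = \frac{1}{|y|}F(\dq^y\varphi(x))$, with $F$ as in \eqref{F-def}, so that
\[
I = \iint \frac{F(\dq^y\varphi(x))}{|y|}\big(\varphi_x(x+y)-\varphi_x(x)\big)\varphi(x)\,dy\,dx .
\]
Applying the measure-preserving change of variables $(x,y)\mapsto(x+y,-y)$, which leaves $\dq^y\varphi(x)$ and $|y|$ invariant while sending $\varphi_x(x+y)-\varphi_x(x)\mapsto -(\varphi_x(x+y)-\varphi_x(x))$ and $\varphi(x)\mapsto\varphi(x+y)$, and averaging with the original expression, I would obtain
\[
2I = \iint \frac{F(\dq^y\varphi(x))}{|y|}\big(\varphi_x(x+y)-\varphi_x(x)\big)\big(\varphi(x)-\varphi(x+y)\big)\,dy\,dx .
\]
Then, for fixed $y\neq 0$, setting $g(x) := \varphi(x+y)-\varphi(x)$ so that $\dq^y\varphi(x) = g(x)/y$, the inner integrand becomes the perfect $x$-derivative
\[
-\frac{1}{|y|}F\!\left(\frac{g(x)}{y}\right) g(x)\, g'(x) = -\frac{1}{|y|}\frac{d}{dx}\!\left[\int_0^{g(x)} F\!\left(\frac{s}{y}\right) s\, ds\right] .
\]
Since $g(x)\to 0$ as $x\to\pm\infty$, the inner integral vanishes for every $y$, hence $2I=0$, so $I=0$ and $\frac{d}{dt}\|\varphi(t)\|_{L^2}^2=0$.

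The main obstacle is the rigorous justification of the two interchanges used above — Fubini's theorem for the double integral, and the fundamental theorem of calculus as $x\to\pm\infty$ — which I would handle using the quadratic vanishing $|F(s)|\lesssim\min(1,s^2)$ of $F$ at the origin together with the decay of $\varphi$. For $|y|\le 1$ one uses $|F(\dq^y\varphi(x))|\lesssim\|\varphi_x\|_{L^\infty}^2$ and $|\varphi_x(x+y)-\varphi_x(x)|\lesssim |y|\,\|\varphi_{xx}\|_{L^\infty}$ to absorb the $|y|^{-1}$ singularity, while for $|y|>1$ one uses $|F(\dq^y\varphi(x))|\lesssim (\dq^y\varphi(x))^2\lesssim |y|^{-2}\|\varphi\|_{L^\infty}^2$ to produce the integrable weight $|y|^{-3}$; in both regimes the $x$-integrals are controlled by fixed Sobolev norms of $\varphi$. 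This is clear for solutions with the regularity of Theorem~\ref{t:lwp} (in particular $A_\varphi\varphi_x\in L^2$ by Proposition~\ref{paralinearization}), and the identity for general solutions then follows by approximation and the continuity of the solution map.
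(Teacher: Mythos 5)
Your proof is correct and follows essentially the same route as the paper: symmetrize the nonlinear term via the change of variables $(x,y)\mapsto(x+y,-y)$ and recognize the resulting $x$-integrand as a perfect derivative in $x$ (your antiderivative $\int_0^{g(x)}F(s/y)\,s\,ds$ is, after rescaling $s=ty$, exactly $y^2G(\dq^y\varphi)$ with the paper's $G(s)=\tfrac{s^2}{2}-\sqrt{1+s^2}$). You also supply the Fubini/decay justification that the paper leaves implicit, which is a welcome addition.
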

\begin{proof}
We have
\begin{align*}
  \frac{d}{dt}\|\varphi\|^2_{L_x^2}&=\int\varphi_t\cdot\varphi\,dx\\
  &=-2\int\int F(\dq^y\varphi)\sdq^y\varphi_x\,dy\cdot\varphi\,dx-2\int \varphi\cdot \log|D_x|\varphi_x\,dx\\
  &=-2\int\int F(\dq^y\varphi)\sdq^y\varphi_x\,dy\cdot\varphi\,dx:=-2I.
\end{align*}

We note that by the change of variables $(x, y) \mapsto (x + y, -y)$,
\begin{align*}
I&= -\int\int F(\dq^{y}\varphi)\sdq^{y}\varphi_x\cdot\varphi(x+y)\,dx\,dy.
\end{align*}
Thus,
\begin{align*}
-2I&=\int\int F(\dq^{y}\varphi)\sdq^{y}\varphi_x\cdot(\varphi(x+y) - \varphi(x))\,dx\,dy\\
&=\int|y|\int F(\dq^{y}\varphi)\dq^y\varphi\cdot \dq^{y}\varphi_x\,dx\,dy\\
&=\int|y|\int \partial_x(G(\dq^{y}\varphi))\,dx\,dy=0,
\end{align*}
where $\displaystyle G(x)=\frac{x^2}{2}-\sqrt{1+x^2}$.
\end{proof}
Recall the asymptotic equation
\begin{align*}
    \dot{\gamma}(t,v)&=iq(\xi_v)\xi_v t^{-1}\left|\gamma(t,v)\right|^2\gamma(t,v)+f(t,v),
\end{align*}

As $t\rightarrow\infty$, $\gamma(t,v)$ converges to the solution of the equation
\begin{align*}
    \dot{\tilde{\gamma}}(t,v)&=iq(\xi_v)\xi_vt^{-1}\tilde{\gamma}(t,v)|\tilde{\gamma}(t,v)|^2,
\end{align*}
whose solution is 
\begin{align*}
    \tilde{\gamma}(t,v)&=W(v)e^{iq(\xi_v)\xi_v\ln(t)|W(v)|^2}
\end{align*}
We can immediately see that
$W(v)$ is well-defined, as
$|W(v)|=|\tilde{\gamma}(t,v)|$, which is a constant, and
\[
W(v)=\lim_{\substack{s\rightarrow\infty}}\tilde{\gamma}(e^{2s\pi/(q(\xi_v)\xi_v|W(v)|^2)},v).
\]

\begin{corollary}\label{Asymptotic expansions}
Let $v\in J_\lambda$. Under the assumption $(t,v)\in\mathcal{D}$, we have the asymptotic expansions
\begin{equation}\label{Wdiff}
    \|\gamma(t,v) - W(v)e^{ i q(\xi_v)\xi_v\log t |W(v)|^2}\|_{L^2 \cap L^\infty(J_\lambda)} \lesssim \lambda^{-1/4}t^{-1/5+C^2\eps^2}\eps.
\end{equation}
\end{corollary}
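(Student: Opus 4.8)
The plan is to compare $\gamma$ against the explicit solution $\tilde\gamma$ of the limiting ODE by running a Gr\"onwall argument on the difference, and then extract the asymptotic profile $W$ from $\tilde\gamma$. First I would introduce the integrating-factor substitution $\eta(t,v) = \gamma(t,v)e^{-iq(\xi_v)\xi_v\log t\,|\gamma(t,v)|^2}$, or more simply work directly with $\gamma$ after observing from Proposition~\ref{Asymptotic equation} that
\[
\frac{d}{dt}|\gamma(t,v)|^2 = 2\,\mathrm{Re}\,\big(\overline{\gamma}\dot\gamma\big) = 2\,\mathrm{Re}\,\big(\overline{\gamma}f(t,v)\big),
\]
since the cubic term $iq(\xi_v)\xi_v t^{-1}\gamma|\gamma|^2$ is purely imaginary against $\overline{\gamma}$. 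Combined with the bootstrap bound $\|\varphi\|_Y \lesssim \eps\langle t\rangle^{-1/2}$, which via the vector field bounds and Lemma~\ref{Gamma bounds} controls $|\gamma(t,v)| \lesssim \eps \lambda^{\mu}$ for an appropriate exponent, and the bound $|f(t,v)| \lesssim \lambda^{-1/4}t^{-6/5+C\eps^2}\eps$ from Proposition~\ref{Asymptotic equation}, this shows $|\gamma(t,v)|^2$ is Cauchy as $t\to\infty$ at rate $t^{-1/5+C\eps^2}$, hence converges to a limit $|W(v)|^2$ with
\[
\big|\,|\gamma(t,v)|^2 - |W(v)|^2\,\big| \lesssim \lambda^{-1/2}t^{-1/5+C\eps^2}\eps^2.
\]

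Next I would feed this back into the phase. Writing $\gamma = |\gamma|e^{i\Theta}$ (away from the zero set; near zeros one argues directly with the difference), the asymptotic equation gives $\dot\Theta = q(\xi_v)\xi_v t^{-1}|\gamma(t,v)|^2 + O(|f|/|\gamma|)$, so
\[
\Theta(t,v) = q(\xi_v)\xi_v |W(v)|^2\log t + \Theta_\infty(v) + (\text{error}),
\]
where the error is controlled by integrating $t^{-1}\big(|\gamma|^2 - |W|^2\big)$ together with the $f$-contribution; both integrate to give an $O(\lambda^{-1/4}t^{-1/5+C\eps^2}\eps)$ tail. Setting $W(v) = |W(v)|e^{i\Theta_\infty(v)}$ then yields
\[
|\gamma(t,v) - W(v)e^{iq(\xi_v)\xi_v\log t\,|W(v)|^2}| \lesssim \lambda^{-1/4}t^{-1/5+C\eps^2}\eps
\]
pointwise in $v\in J_\lambda$. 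The $L^\infty(J_\lambda)$ bound is this estimate uniformly in $v$; the $L^2(J_\lambda)$ bound follows the same scheme but propagating the $L^2_v$ component of the $f$-estimate and using the $L^2$ form of Lemma~\ref{Gamma bounds}, with the $\lambda^{-1/4}$ already absorbing the $|J_\lambda|^{1/2}$ loss.

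The main obstacle is the cleanest handling of the phase near points where $\gamma(t,v)$ is small or vanishes, where the polar decomposition $\gamma = |\gamma|e^{i\Theta}$ degenerates; the standard fix is to avoid polar coordinates entirely and instead estimate $\frac{d}{dt}\big(\gamma(t,v)e^{-iq(\xi_v)\xi_v\log t\,|W(v)|^2}\big)$ directly, where the cubic term becomes $iq(\xi_v)\xi_v t^{-1}\gamma\big(|\gamma|^2 - |W|^2\big)$ plus the source $f$, both of which are integrable in $t$ at the stated rate thanks to the $\big|\,|\gamma|^2 - |W|^2\,\big|$ bound from the first step and to $6/5 > 1$. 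One then recognizes the $t\to\infty$ limit of this quantity as exactly $W(v)$, by the construction $W(v) = \lim \tilde\gamma(e^{2s\pi/(q(\xi_v)\xi_v|W(v)|^2)},v)$ recorded above, and the displayed estimate is precisely the tail bound of the convergent integral. A secondary bookkeeping point is ensuring the exponent $-1/5$ (rather than $-6/5+1 = -1/5$) is sharp: the loss from $\log t$-type terms against the slowly varying phase is absorbed into the $t^{C\eps^2}$ factor exactly as in the energy estimate \eqref{Regular Energy Estimate}.
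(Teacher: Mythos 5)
Your proposal is correct and spells out in detail what the paper dismisses as ``an immediate consequence of Proposition~\ref{Asymptotic equation}'': one first integrates the source bound $|f|+\|f\|_{L^2_v}\lesssim\lambda^{-1/4}t^{-6/5+C\eps^2}\eps$ against $\tfrac{d}{dt}|\gamma|^2=2\operatorname{Re}(\overline\gamma f)$ to see that $|\gamma(t,v)|^2$ converges to $|W(v)|^2$ at rate $t^{-1/5+C\eps^2}$, and then tracks $\gamma(t,v)e^{-iq(\xi_v)\xi_v\log t\,|W(v)|^2}$ directly, whose time derivative is integrable because both the residual cubic term $iq(\xi_v)\xi_vt^{-1}\gamma(|\gamma|^2-|W|^2)$ and $f$ decay faster than $t^{-1}$, with the $\lambda$-powers closing thanks to $q(\xi_v)\xi_v\sim\lambda^3$ and the vector-field bound on $|\gamma|$. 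Your point that the polar decomposition should be avoided in favor of this integrating-factor formulation is exactly the right treatment of the zero set of $\gamma$, and the resulting tail bound reproduces the stated $\lambda^{-1/4}t^{-1/5+C^2\eps^2}\eps$ rate in both $L^\infty_v$ and $L^2_v$.
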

\begin{proof}
This is an immediate consequence of Proposition \ref{Asymptotic equation}.
\end{proof}
\begin{proposition}
Under the assumption
\begin{align*}
    \|\varphi_0\|_{X}\lesssim\eps\ll 1,
\end{align*}
the asymptotic profile $W$ defined above satisfies 
\begin{align*}
\|W(v)\|_{L_v^2}+\|e^{-\frac{v}{2}}e^{|v|C_1\epsilon^2}|D_v|^{1-C_1\epsilon^2}W(v)\|_{L_v^2}\lesssim\eps.
\end{align*}
\end{proposition}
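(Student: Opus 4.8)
The plan is to transfer the information we have about $\gamma$ — namely its $L^2$ bound at every time, its modified-scattering profile from Corollary~\ref{Asymptotic expansions}, and its higher-frequency regularity from Lemma~\ref{Gamma bounds} — into bounds for $W$. First I would record the $L^2$ bound on $W$: since $|W(v)| = |\tilde\gamma(t,v)|$ is constant in $t$, and since $\|\gamma(t,\cdot)\|_{L^2_v(J_\lambda)} \lesssim (t\lambda)^{1/4}\|\varphi_\lambda\|_{L^2_x} \lesssim (t\lambda)^{1/4} \cdot \lambda^{-1}\|\varphi\|_X / \text{(appropriate power)}$ from Lemma~\ref{Gamma bounds} combined with the vector field bounds, one compares $W$ with $\gamma(t,\cdot)$ using \eqref{Wdiff} on each $J_\lambda$, then lets $t\to\infty$ along the subsequence for which the comparison is valid in $\mathcal D$; dyadic summation in $\lambda$ then yields $\|W\|_{L^2_v} \lesssim \eps$. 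The key point is that \eqref{Wdiff} has a decaying factor $t^{-1/5 + C^2\eps^2}$, so letting $t\to\infty$ kills the error uniformly, while $\|\gamma(t,\cdot)\|_X$ grows only like $t^{C\eps^2}$, which is beaten by the $t^{-1/5}$; choosing the time $t \approx \lambda^{-N}$ (resp. $\lambda^N$) at the edge of $\mathcal D$ as in the bootstrap section makes this rigorous at each frequency.

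Next I would handle the weighted-regularity term $\|e^{-v/2} e^{|v| C_1 \eps^2} |D_v|^{1 - C_1\eps^2} W\|_{L^2_v}$. Writing $W(v) = \lim \tilde\gamma$ and using the explicit form $\tilde\gamma(t,v) = W(v) e^{i q(\xi_v)\xi_v \log t\, |W(v)|^2}$, one sees $|D_v|$-regularity of $W$ is essentially the $|D_v|$-regularity of $\gamma(t,\cdot)$ at a suitable time, minus the contribution of the oscillatory phase $e^{i q(\xi_v)\xi_v \log t |W(v)|^2}$, whose $v$-derivative produces a factor $\log t \cdot (\text{smooth}) \cdot |W|^2$ — here is where the $\log t$ interacts with the $C_1\eps^2$ loss in the exponent $1 - C_1\eps^2$: differentiating the phase costs $\log t$, but $|W|^2 \lesssim \eps^2$, so after $\approx 1/\eps^2$ such differentiations one still has a power-of-$t$ loss of size $t^{C\eps^2}$, exactly the $C_1\eps^2$ derivative deficit. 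The weight $e^{-v/2}$ reflects $\xi_v = -e^{-1 - v/2}$, so $e^{-v/2} \approx |\xi_v|$; combining with $\partial_v \xi_v = -\tfrac12 \xi_v$, differentiating in $v$ is comparable to the operator $\xi \partial_\xi$ on the Fourier side, so $e^{-v/2}|D_v| W(v)$ corresponds (after the change of variables $v \leftrightarrow \xi_v$) to $\xi \partial_\xi$ acting on a rescaling of $\widehat\varphi$, i.e. essentially to $L\partial_x \varphi$. The extra weight $e^{|v| C_1\eps^2}$ then absorbs the $t^{C\eps^2}$-type growth once one trades $t$ for $\lambda$ via the region constraint $t \approx \lambda^{\pm N}$, turning a $\lambda^{C\eps^2}$ loss into $e^{|v|C\eps^2}$ since $|v| \approx |\log\lambda|$ on $J_\lambda$.

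Concretely, the steps in order: (i) use Lemma~\ref{Gamma bounds} and the vector-field/elliptic bounds of Section~\ref{s:gwp} to get, for $(t,v) \in \mathcal D$, both $\|\chi_\lambda \gamma^\lambda\|_{L^2_v} \lesssim \eps t^{C\eps^2}$ and $\|\chi_\lambda \partial_v \gamma^\lambda\|_{L^2_v} \lesssim \eps t^{C\eps^2}\cdot(\text{power of }\lambda,t)$ compatible with the claimed weights; (ii) pass to $\tilde\gamma$ via Corollary~\ref{Asymptotic expansions}, noting the $t^{-1/5+C^2\eps^2}$ gain lets the limit $t\to\infty$ exist; (iii) from $\tilde\gamma = W e^{i q \xi_v \log t |W|^2}$, solve for $W$ and compute $\partial_v W$, isolating the phase contribution which carries the $\log t \cdot |W|^2 \lesssim \eps^2 \log t$ factor — this is where the $1 - C_1\eps^2$ exponent and the $e^{|v|C_1\eps^2}$ weight are consumed; (iv) change variables $v \mapsto \xi_v$ so that $e^{-v/2}\partial_v \leftrightarrow$ a Littlewood–Paley-compatible version of $\xi\partial_\xi \leftrightarrow L\partial_x$, and conclude using $\|L\partial_x\varphi\|_{L^2} \lesssim \|\varphi\|_X \lesssim \eps t^{C\eps^2}$ together with dyadic summation in $\lambda$, choosing $t$ at the boundary of $\mathcal D$ to convert $t^{C\eps^2}$ into the stated weight. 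The main obstacle is step (iii)–(iv): carefully tracking how the non-integer, $\eps$-dependent amount of regularity $1 - C_1\eps^2$ emerges — it is not an artifact but precisely the balance between the $\log t$ cost of differentiating the modified-scattering phase and the smallness $|W|^2 \lesssim \eps^2$, so one must set up a fractional-derivative (or interpolation) estimate that makes $|D_v|^{1-C_1\eps^2}$ natural rather than $|D_v|^1$, and verify the exponential weight $e^{|v|C_1\eps^2}$ exactly matches the frequency-dependent $t^{C\eps^2}$ losses after the substitution $t\approx\lambda^{\mp N}$.
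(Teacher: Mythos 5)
Your overall plan --- compare $\gamma$ to $W$ via Corollary~\ref{Asymptotic expansions}, use Lemma~\ref{Gamma bounds} for $v$-regularity of $\gamma$, and sum dyadically in $\lambda$ --- matches the paper's, but two of the mechanisms you propose for the weighted $|D_v|^{1-C_1\eps^2}$ bound are not quite right.

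First, differentiating the identity $\tilde\gamma(t,v)=W(v)\,e^{iq(\xi_v)\xi_v\log t\,|W(v)|^2}$ in $v$ is circular: the $v$-derivative of the phase produces $\partial_v|W|^2=2\,\mathrm{Re}(\overline W\partial_v W)$, so you end up bounding $\partial_v W$ by an expression containing $\partial_v W$, and moreover $\partial_v\tilde\gamma$ is not directly controlled. The paper avoids this by replacing $|W|^2$ in the phase with $|\gamma(t,v)|^2$: Corollary~\ref{Asymptotic expansions} together with $|W|=|\tilde\gamma|$ gives
$W(v)=e^{-iq(\xi_v)\xi_v\log t\,|\gamma(t,v)|^2}\gamma(t,v)+O_{L^2(J_\lambda)}(\lambda^{-1/4}t^{-1/5+C^2\eps^2}\eps)$,
and $\partial_v$ of the main term is then controlled purely by Lemma~\ref{Gamma bounds}, yielding $\|\partial_v(\,\cdot\,)\|_{L^2(J_\lambda)}\lesssim\lambda^{-1}\log(t)\,\eps\,t^{C^2\eps^2}$. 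Second, the exponent $1-C_1\eps^2$ is not obtained by iterating derivatives ``$\approx 1/\eps^2$ times'' --- there is only one fractional derivative here. The actual mechanism is real interpolation in the free parameter $t$: for each $t\gtrsim t_\lambda$ one has a two-piece decomposition of $W$, with the $\dot H^1(J_\lambda)$ piece growing in $t$ like $t^{C^2\eps^2}\log t$ and the $L^2(J_\lambda)$ piece decaying like $t^{-1/5+C^2\eps^2}$. Optimizing $t$ --- rather than freezing it at the boundary $t\approx t_\lambda$ of $\mathcal D$ as you suggest --- gives $K$-functional bounds placing $W$ in $\dot H^{1-C_1\eps^2}(J_\lambda)$ with coefficient $\approx\lambda^{-1}\lambda^{\pm C_1\eps^2/4}$; the derivative deficit $C_1\eps^2$ and the $\lambda^{\pm C_1\eps^2/4}$ factor are both forced by the $t^{C^2\eps^2}$ energy-growth rate. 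Upon dyadic summation, $e^{-v/2}\approx\lambda$ absorbs the $\lambda^{-1}$ (as you correctly observe), while $e^{|v|C_1\eps^2}$ absorbs the $\lambda^{\pm C_1\eps^2/4}$ using $|v|\approx 2|\log\lambda|$ on $J_\lambda$. Your further change of variables $v\mapsto\xi_v$ and the identification with $L\partial_x\varphi$ is reasonable intuition but is not used in the paper and does not by itself resolve the $t^{C\eps^2}$ growth --- only the $t$-interpolation does.
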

\begin{proof}
We fix $\lambda$, and let $t\gtrsim\max\{1,\lambda^{-1}\}:=t_\lambda$. From Corollary \ref{Asymptotic expansions} we know that
\begin{align*}
\|W(v)-e^{- i q(\xi_v)\xi_v\log t |\gamma(t,v)|^2}\gamma(t,v)\|_{L_v^2(J_{\lambda})}\lesssim \lambda^{-1/4}t^{-1/5+C^2\eps^2}\eps
\end{align*}
From the product and chain rules with Lemma \ref{Gamma bounds}, we have 
\begin{align*}
    \left\|\partial_v\left(e^{-i q(\xi_v)\xi_v\log t |\gamma(t,v)|^2}\gamma(t,v)\right)\right\|_{L_v^2(J_{\lambda})}&\lesssim \lambda^{-1}\log(t)\eps t^{C^2\eps^2}.
\end{align*}
In this case,
\begin{align*}
W(v)=O_{\dot{H}^1_v(J_\lambda)}(\lambda^{-1}\log(t)\eps t^{C^2\eps^2})+O_{L_v^2(J_\lambda)}(\lambda^{-1/4}t^{-1/5+C^2\eps^2}\eps), \qquad t \gtrsim t_\lambda.
\end{align*}
By interpolation  this will imply that
for $C_1$ large enough we have
\begin{align*}
\|W(v)\|_{\dot{H}_v^{1-C_1\epsilon^2}(J_\lambda)}&\lesssim \lambda^{\frac{C_1\epsilon^2}{4}-1}\epsilon, \qquad \lambda > 1
\end{align*}
respectively 
\begin{align*}
\|W(v)\|_{\dot{H}_v^{1-C_1\epsilon^2}(J_\lambda)}&\lesssim \lambda^{-\frac{C_1\epsilon^2}{4}-1}\epsilon, \qquad \lambda <1 
\end{align*}

By dyadic summation over $\lambda\geq 1$ and $\lambda\leq 1$,
\begin{align*}
\|e^{-\frac{v}{2}}e^{|v|C_1\epsilon^2}|D_v|^{1-C_1\epsilon^2}W(v)\|_{L_v^2}\lesssim\epsilon
\end{align*}
\end{proof}

\bibliography{bib-sqg}
\bibliographystyle{plain}

\end{document}